\documentclass{article}
\usepackage{amsfonts}
\usepackage{amsmath}
\usepackage{graphicx}
\usepackage{float}
\usepackage{epstopdf}

\setcounter{MaxMatrixCols}{10}

\newtheorem{theorem}{Theorem}

\newtheorem{corollary}[theorem]{Corollary}

\newtheorem{definition}[theorem]{Definition}

\newtheorem{lemma}[theorem]{Lemma}

\newtheorem{proposition}[theorem]{Proposition}
\newtheorem{remark}[theorem]{Remark}

\newenvironment{proof}[1][Proof]{\noindent\textbf{#1.} }{\ \rule{0.5em}{0.5em}}

\begin{document}

\title{Singular Infinite Horizon Quadratic Control of Linear Systems with
Known Disturbances: \\
A Regularization Approach}
\author{Valery Y. Glizer \\
ORT Braude College of Engineering, Karmiel, Israel\\
valery48@braude.ac.il \and Oleg Kelis \\
ORT Braude College of Engineering, Karmiel Israel\\
Haifa University, Haifa, Israel,\\
olegkelis@braude.ac.il}
\maketitle

\begin{abstract}
An optimal control problem with an infinite horizon quadratic cost
functional for a linear system with a known additive disturbance is
considered. The feature of this problem is that a weight matrix of the
control cost in the cost functional is singular. Due to this singularity,
the problem can be solved neither by application of the Pontriagin's Maximum
Principle, nor using the Hamilton-Jacobi-Bellman equation approach, i.e.
this problem is singular. Since the weight matrix of the control cost, being
singular, is not in general zero, only a part of the control coordinates is
singular, while the others are regular. This problem is solved by a
regularization method. Namely, it is associated with a new optimal control
problem for the same equation of dynamics. The cost functional in this new
problem is the sum of the original cost functional and an infinite horizon
integral of the squares of the singular control coordinates with a small
positive weight. Due to a smallness of this coefficient, the new problem is
a partial cheap control problem. Using a perturbation technique, an
asymptotic analysis of this partial cheap control problem is carried out.
Based on this analysis, the infimum of the cost functional in the original
problem is derived, and a minimizing sequence of state-feedback controls is
designed. An illustrative example of a singular trajectory tracking is
presented.
\end{abstract}

\section{Introduction}

In this paper, an optimal control of a linear differential equation with
constant coefficients for the state and the control, and with a known
additive disturbance is considered. The control process is evaluated by an
infinite horizon quadratic cost functional to be minimized by a proper
choice of the control. A weight matrix of the control cost in this cost
functional is singular, meaning that the optimal control problem is
singular. Namely, the considered problem can be solved neither by
application of the Pontriagin's Maximum Principle \cite%
{pontr-bolt-gakr-misch-86}, nor using the Hamilton-Jacobi-Bellman equation
approach (Dynamic Programming approach) \cite{bel-57}. This occurs because
the problem of maximization of the corresponding variational Hamiltonian
with respect to the control either has no solution, or has infinitely many
solutions. To the best of our knowledge, five main methods of solution of
singular optimal control problems can be distinguished in the literature.
Thus, higher order necessary or sufficient optimality conditions can be
helpful in solving a singular optimal control problem (see e.g. \cite%
{kelly-64,bel-jac-75,gab-kir-72,Mehr-89,krotov-96,Ferran-Ntog-2014} and
references therein). However, such conditions fail to yield a candidate
optimal control (an optimal control) for the problem, having no solution (an
optimal control) in the class of regular functions, even if the cost
functional has a finite infimum in this class of functions. The second
method propose to derive a singular optimal control as a minimizing sequence
of regular open-loop controls, i.e., a sequence of regular control functions
of time, along which the cost functional tends to its infimum (see e.g. \cite%
{krotov-96,gur-65,gur-dix-77} and references therein). The derivation of the
minimizing sequence is based on the Krotov's sufficient optimality
conditions \cite{krotov-96} and theory of linear first order partial
differential equations. A generalization of this method is the extension
approach \cite{gur-kang-11-1,gur-kang-11-2,gur-kang-11-3}. The third method
combines geometric and analytic approaches. Namely, this method is based on
a decomposition of the state space into "regular" and "singular" subspaces,
and a design an optimal open loop control as a sum of impulsive and regular
functions (see e.g. \cite{Hau-Silv-83,Wil-Kit-Sil-86,Geerts-89,Geerts-94}
and references therein). The fourth method consists in searching a solution
of a singular optimal control problem in a properly defined class of
generalized functions (see e.g. \cite{zav-ses-97}). Finally, the fifth
method based on a regularization of the original singular problem by a
"small" correction of its "singular" cost functional (see e.g. \cite%
{Glizer2012a,Glizer2012b,Glizer2014} and references therein). Such a
regularization is a kind of the Tikhonov's regularization of ill-posed
problems \cite{Tikh-Ars-77}. This method yields the solution of the original
problem in the form of a minimizing sequence of state-feedback controls.

In the present paper, a singular infinite horizon linear-quadratic optimal
control problem with a known additive time-varying disturbance in the
dynamics is considered. To the best of our knowledge, such a problem has not
yet been considered in the literature. This problem is treated by the
regularization, which yields an auxiliary partial cheap control problem.
Using perturbation techniques, an asymptotic behavior of the solution to the
auxiliary control problem is analyzed. Based on this analysis, the existence
of the finite infimum of the cost functional in the original (singular)
control problem is established. The expression for this infimum is derived.
The minimizing sequence of state-feedback controls in the original problem
also is designed. The theoretical results are applied to solution of a
singular tracking problem.

The paper is organized as follows. In Section 2, the rigorous formulation of
the problem is presented. Objectives of the paper are stated. In Section 3,
a regularization of the original singular optimal control problem is made,
yielding a partial cheap control problem. An asymptotic analysis of this
problem is carried out in Section 4. The solution of the original singular
optimal control problem is derived in Section 5. Section 6 deals with an
illustrative example. Section 7 contains some concluding remarks. Proofs of
some technical lemmas and of one of the main theorems are placed in
Appendices.

\section{Problem Statement}

\subsection{Initial control problem and main assumptions \label{Sec2.1}}

Consider the following controlled differential equation:
\begin{equation}  \label{or-eq}
\frac{dZ(t)}{dt}={\mathcal{A}}Z(t)+{\mathcal{B}}U(t)+{\mathcal{F}}(t),\ \ \
t\geq 0, \ \ \ Z(0)=Z_{0},
\end{equation}
where $Z(t)\in E^{n}$ is the state vector; $U(t)\in E^{r}$, ($r\leq n$) is
the control; ${\mathcal{A}}$ and ${\mathcal{B}}$ are given constant matrices
of corresponding dimensions; ${\mathcal{F}}(t)$, $t\ge 0$ is a given
vector-valued function; $Z_{0}\in E^{n}$ is a given vector; for any integer $%
m>0$, $E^{m}$ denotes the real Euclidean space of the dimension $m$.

The cost functional, to be minimized by $U$, is
\begin{equation}
{\mathcal{J}}\big(U\big)\overset{\triangle}{=} \int\limits_{0}^{+\infty }%
\left[ Z^{T}(t){\mathcal{D}}Z(t)+U^{T}(t)GU(t)\right]dt,  \label{perf-ind}
\end{equation}
where ${\mathcal{D}}$ is a given constant symmetric matrix of corresponding
dimension; the given constant $r\times r$-matrix $G$ has the form
\begin{equation}
G=\mathrm{diag}\big(g_{1},...,g_{q},\underbrace{0,...,0}_{r-q}\big),\ \ \ \
0\leq q<r,  \label{mathcal-G_u}
\end{equation}%
the superscript $"T"$ denotes the transposition.

In what follows, we assume:

(\textbf{A1)}\ The matrix ${\mathcal{B}}$ has full rank $r$;

(\textbf{A2)}\ the matrix ${\mathcal{D}}$ is positive semi-definite (${%
\mathcal{D}} \ge 0$);

(\textbf{A3)}\ $g_{k} > 0$, $k=1,...,q$;

(\textbf{A4)}\ the function ${\mathcal{F}}(t)$ satisfies the inequality $\|{%
\mathcal{F}}(t)\|\le a\exp(-\gamma t)$, $t \ge 0$, where $a>0$ and $\gamma>0$
are some constants.

Since the matrix $G$ is singular, the optimal control problem (\ref{or-eq})-(%
\ref{perf-ind}) is singular.

Consider the set ${\mathcal{P}}$ of all functions $p(w,t):E^{n}\times \left[
0,+\infty \right) \rightarrow E^{r}$, which are measurable w.r.t. $t\geq 0$
for any fixed $w\in E^{n}$ and satisfy the local Lipschitz condition w.r.t. $%
w\in E^{n}$ uniformly in $t\geq 0$.

\begin{definition}
\label{admiss-def-auxil}Let $U(Z,t)$, $(Z,t)\in E^{n}\times \left[ 0,+\infty
\right)$, be a function belonging to the set ${\mathcal{P}}$. The function $%
U(Z,t)$ is called an admissible state-feedback control in the problem (\ref%
{or-eq})-(\ref{perf-ind}) if the following conditions hold: (a) the
initial-value problem (\ref{or-eq}) for $U(t)=U(Z,t)$ has the unique locally
absolutely continuous solution $Z(t)$ on the entire interval $\left[
0,+\infty \right)$; (b) $Z(t)\in L^{2}\left[ 0,+\infty; E^{n}\right]$; (c) $U%
\big(Z(t),t\big) \in L^{2}\left[0,+\infty ; E^{r}\right]$. The set of all
such $U(Z,t)$ is denoted by ${\mathcal{M}}_{U}$.
\end{definition}

Denote
\begin{equation}  \label{inf-mathcal J}
\mathcal{J}^{*} \overset{\triangle}{=} \inf_{U(Z,t)\in\mathcal{M}_{U}}%
\mathcal{J}\big(U(Z,t)\big).
\end{equation}

\begin{remark}
\label{finiteness-mathcal-J^*}Since ${\mathcal{D}}\ge 0$ and $G\ge 0$, then
the infimum (\ref{inf-mathcal J}) is nonnegative. Moreover, if ${\mathcal{M}}%
_{U}\neq\emptyset$, this infimum is finite.
\end{remark}

\begin{definition}
The control sequence $\big\{U_{k}(Z,t)\big\}$, $U_{k}(Z,t)\in \mathcal{M}%
_{U} $, $(k=1,2,...)$, is called minimizing in the problem (\ref{or-eq})-(%
\ref{perf-ind}) if
\begin{equation}
\lim_{k\rightarrow +\infty }\mathcal{J}\big(U_{k}(Z,t)\big) = \mathcal{J}%
^{*}.  \label{minim-seq}
\end{equation}
\end{definition}

If there exists $U^{\ast }(Z,t)\in \mathcal{M}_{U}$, for which
\begin{equation}  \label{opt-minim-strat-def}
\mathcal{J}\big(U^{\ast }(Z,t)\big)=\mathcal{J}^{*},
\end{equation}
this control is called optimal in the problem (\ref{or-eq})-(\ref{perf-ind}%
). In this case there exists a minimizing control sequence, point-wise
convergent to $U^{\ast }(Z,t)$ for a. a. $(Z,t)\in E^{n}\times \left[0 ,
+\infty \right)$.

\subsection{Transformation of the problem (\protect\ref{or-eq})-(\protect\ref%
{perf-ind})\label{Sec2.3}}

Let us partition the matrix ${\mathcal{B}}$ into blocks as
${\mathcal{B}} =\Big( {\mathcal{B}}_{1}, {\mathcal{B}}_{2}\Big)$,
where the blocks ${\mathcal{B}}_{1}$ and ${\mathcal{B}}_{2}$ are of
dimensions $n\times q$ and $n\times(r-q)$, respectively.

We assumed that:

(\textbf{A5)}\ \ $\det \left( {\mathcal{B}}_{2}^{T}{\mathcal{DB}}_{2}\right)
\neq 0$.

By ${\mathcal{B}}_{c}$, we denote a complement matrix to the matrix ${%
\mathcal{B}}$, i.e., the matrix of dimension $n\times (n-r)$, and such that
the block matrix $({\mathcal{B}}_{c},{\mathcal{B}})$ is nonsingular. Hence,
the block matrix $\widetilde{{\mathcal{B}}}_{c}=\left( {\mathcal{B}}_{c},{%
\mathcal{B}}_{1}\right)$ is a complement matrix to ${\mathcal{B}}_{2}$.

Consider the following matrices:
\begin{equation}
{\mathcal{H}}=({\mathcal{B}}_{2}^{T}{\mathcal{D}}{\mathcal{B}}_{2})^{-1}{%
\mathcal{B}}_{2}^{T}{\mathcal{D}}\widetilde{{\mathcal{B}}}_{c},\ \ \ \ {%
\mathcal{L}}=\widetilde{{\mathcal{B}}}_{c}-{\mathcal{B}}_{2}{\mathcal{H}}.
\label{state-trans-L}
\end{equation}

Now, using the block matrix $\left( {\mathcal{L}},{\mathcal{B}}_{2}\right)$,
we transform the state in the control problem (\ref{or-eq})-(\ref{perf-ind})
as follows:
\begin{equation}
Z(t)=\left( {\mathcal{L}} , {\mathcal{B}}_{2}\right) z(t),
\label{state-trans0}
\end{equation}%
where $z(t)\in E^{n}$ is a new state.

By virtue of the results of \cite{GlizerFridmanTuretsky2007}, the
transformation (\ref{state-trans0}) is nonsingular.

\begin{remark}
\label{zero-matr-notation}In what follows, we use the notation $%
O_{n_{1}\times n_{2}}$ for the zero matrix of dimension $n_{1}\times n_{2}$,
excepting the cases where the dimension of zero matrix is obvious. In such
cases, we use the notation $0$ for the zero matrix. By $I_{m}$, we denote
the identity matrix of dimension $m$.
\end{remark}

Based on the results of \cite{Glizer-Kelis2015} (Lemma 1), we have the
following lemma.

\begin{lemma}
\label{init-probl-transf} Let the assumptions (A1), (A2), (A4), (A5) be
valid. Then, transforming the state variable of the problem (\ref{or-eq})-(%
\ref{perf-ind}) in accordance with (\ref{state-trans0}), and redenoting the
control as $u(t)$, we obtain the control problem with the dynamics
\begin{equation}
\frac{dz(t)}{dt}=Az(t)+Bu(t)+f(t),\ \ \ \ z(0)=z_{0},\ \ \ t\geq 0,
\label{new-eq}
\end{equation}%
and the cost functional
\begin{equation}
J(u)=\int\limits_{0}^{+ \infty }\left[ z^{T}(t)Dz(t)+u^{T}(t)Gu(t)\right] dt,
\label{new-perf-ind}
\end{equation}%
where
\begin{equation}
A=\left( {\mathcal{L}},{\mathcal{B}}_{2}\right)^{-1}{\mathcal{A}}\left( {%
\mathcal{L}},{\mathcal{B}}_{2}\right) ,  \label{new-matr-A}
\end{equation}%
\begin{equation}
B=\left( {\mathcal{L}},{\mathcal{B}}_{2}\right) ^{-1}{\mathcal{B}}=\left(
\begin{array}{l}
B_{1} \\
B_{2}%
\end{array}%
\right) ,  \label{new-matr-B}
\end{equation}%
\begin{equation}
B_{1}=\left(
\begin{array}{c}
O_{\left( n-r\right) \times r} \\
\widetilde{I}_{1}%
\end{array}%
\right) ,\ \ \ \ \widetilde{I}_{1}=\left(
\begin{array}{cc}
I_{q}\ ,\ O_{q\times \left( r-q\right) } &
\end{array}%
\right) ,  \label{newmatrB1}
\end{equation}%
\begin{equation}
B_{2}={\mathcal{H}}B_{1}+\widetilde{I}_{2},\ \ \ \ \widetilde{I}_{2}=\left(
\begin{array}{cc}
O_{\left( r-q\right) \times q}\ ,\ I_{r-q} &
\end{array}%
\right) ,  \label{newmatrB2}
\end{equation}%
\begin{equation}
\begin{array}{ll}
D=\left( {\mathcal{L}},{\mathcal{B}}_{2}\right) ^{T}{\mathcal{D}}\left({%
\mathcal{L}},{\mathcal{B}}_{2}\right) =\left(
\begin{array}{l}
D_{1}\ \ \ \ \ \ \ \ \ \ \ \ \ \ \ \ \ \ \ O_{(n-r+q)\times (r-q)} \\
O_{(r-q)\times (n-r+q)}\ \ \ D_{2}%
\end{array}%
\right) , &  \\
&  \\
D_{1}\ ={\mathcal{L}}^{T}{\mathcal{DL}},\ \ \ \ D_{2}={\mathcal{B}}_{2}^{T}{%
\mathcal{DB}}_{2}, &
\end{array}
\label{new-matr-D}
\end{equation}%
\begin{equation}  \label{f(t)}
f(t)=\left( {\mathcal{L}},{\mathcal{B}}_{2}\right)^{-1}\mathcal{F}(t),
\end{equation}
\begin{equation}
z_{0}=\left( {\mathcal{L}},{\mathcal{B}}_{2}\right) ^{-1}Z_{0},
\label{new-z0}
\end{equation}
the matrices $D_{1}$ and $D_{2}$ are symmetric, and $D_{1}$ is positive
semi-definite ($D_{1} \ge 0$), $D_{2}$ is positive definite ($D_{2} > 0$).
Moreover, the function $f(t)$ satisfies the inequality
\begin{equation}  \label{f-ineq}
\|f(t)\|\le a\exp(-\gamma t), \ \ \ \ t \ge 0,
\end{equation}
where $a>0$ is some constant.
\end{lemma}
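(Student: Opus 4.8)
For $D$ the key cancellation comes from the defining formula for ${\mathcal{H}}$, and for $B$ everything reduces to the fact that ${\mathcal{B}}_{1}$ is a submatrix of $\widetilde{{\mathcal{B}}}_{c}$.

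The plan is to carry out the change of state variable (\ref{state-trans0}) explicitly and then read off each asserted property from the resulting formulas. Since, by \cite{GlizerFridmanTuretsky2007}, the constant matrix $\big({\mathcal{L}},{\mathcal{B}}_{2}\big)$ is nonsingular, I would substitute $Z(t)=\big({\mathcal{L}},{\mathcal{B}}_{2}\big)z(t)$ into (\ref{or-eq}) and left-multiply by $\big({\mathcal{L}},{\mathcal{B}}_{2}\big)^{-1}$; this produces at once the dynamics (\ref{new-eq}) with $A$, $B$, $f$, $z_{0}$ as in (\ref{new-matr-A}), (\ref{new-matr-B}), (\ref{f(t)}), (\ref{new-z0}). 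Substituting the same relation into (\ref{perf-ind}) gives $Z^{T}(t){\mathcal{D}}Z(t)=z^{T}(t)Dz(t)$ with $D=\big({\mathcal{L}},{\mathcal{B}}_{2}\big)^{T}{\mathcal{D}}\big({\mathcal{L}},{\mathcal{B}}_{2}\big)$, while the control-cost term $U^{T}GU$ is untouched once the control is redenoted by $u$, yielding (\ref{new-perf-ind}). It then remains only to verify the structural claims about $D$, $B$, and $f$.

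For $D$, I would expand $\big({\mathcal{L}},{\mathcal{B}}_{2}\big)^{T}{\mathcal{D}}\big({\mathcal{L}},{\mathcal{B}}_{2}\big)$ into $2\times2$ blocks of sizes $n-r+q$ and $r-q$: the diagonal blocks are $D_{1}={\mathcal{L}}^{T}{\mathcal{DL}}$ and $D_{2}={\mathcal{B}}_{2}^{T}{\mathcal{DB}}_{2}$, and the lower-left off-diagonal block is ${\mathcal{B}}_{2}^{T}{\mathcal{D}}{\mathcal{L}}={\mathcal{B}}_{2}^{T}{\mathcal{D}}\widetilde{{\mathcal{B}}}_{c}-\big({\mathcal{B}}_{2}^{T}{\mathcal{DB}}_{2}\big){\mathcal{H}}$, which vanishes by the definition of ${\mathcal{H}}$ in (\ref{state-trans-L}); the upper-right block is its transpose and vanishes likewise. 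This gives the block-diagonal form (\ref{new-matr-D}). Symmetry of $D_{1}$ and $D_{2}$ is inherited from that of ${\mathcal{D}}$; $D_{1}={\mathcal{L}}^{T}{\mathcal{DL}}\ge0$ follows from ${\mathcal{D}}\ge0$ (assumption (A2)); and $D_{2}={\mathcal{B}}_{2}^{T}{\mathcal{DB}}_{2}$ is positive semi-definite and, by (A5), nonsingular, hence $D_{2}>0$.

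For $B$, I would partition $\big({\mathcal{L}},{\mathcal{B}}_{2}\big)^{-1}$ row-wise as $\left(\begin{array}{c}P\\Q\end{array}\right)$, with $P$ having $n-r+q$ rows and $Q$ having $r-q$ rows; the identity $\big({\mathcal{L}},{\mathcal{B}}_{2}\big)^{-1}\big({\mathcal{L}},{\mathcal{B}}_{2}\big)=I_{n}$ then gives $P{\mathcal{L}}=I_{n-r+q}$, $P{\mathcal{B}}_{2}=0$, $Q{\mathcal{L}}=0$, $Q{\mathcal{B}}_{2}=I_{r-q}$. Setting $S=\left(\begin{array}{c}O_{(n-r)\times q}\\I_{q}\end{array}\right)$, one has $\widetilde{{\mathcal{B}}}_{c}S={\mathcal{B}}_{1}$, since ${\mathcal{B}}_{1}$ consists of the last $q$ columns of $\widetilde{{\mathcal{B}}}_{c}=({\mathcal{B}}_{c},{\mathcal{B}}_{1})$; hence ${\mathcal{L}}S=\widetilde{{\mathcal{B}}}_{c}S-{\mathcal{B}}_{2}{\mathcal{H}}S={\mathcal{B}}_{1}-{\mathcal{B}}_{2}{\mathcal{H}}S$, i.e. ${\mathcal{B}}_{1}={\mathcal{L}}S+{\mathcal{B}}_{2}({\mathcal{H}}S)$. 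Multiplying on the left by $P$ and by $Q$ yields $P{\mathcal{B}}_{1}=S$ and $Q{\mathcal{B}}_{1}={\mathcal{H}}S$, so $B_{1}=\big(P{\mathcal{B}}_{1},\,P{\mathcal{B}}_{2}\big)=\big(S,\,0\big)$, which in view of the shape of $S$ is precisely $B_{1}$ in (\ref{newmatrB1}), and $B_{2}=\big(Q{\mathcal{B}}_{1},\,Q{\mathcal{B}}_{2}\big)=\big({\mathcal{H}}S,\,I_{r-q}\big)$; since ${\mathcal{H}}B_{1}={\mathcal{H}}\big(S,\,0\big)=\big({\mathcal{H}}S,\,0\big)$, this equals ${\mathcal{H}}B_{1}+\widetilde{I}_{2}$, which is (\ref{newmatrB2}).

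Finally, from $f(t)=\big({\mathcal{L}},{\mathcal{B}}_{2}\big)^{-1}{\mathcal{F}}(t)$, submultiplicativity of the matrix norm, and (A4), I would obtain $\|f(t)\|\le\big\|\big({\mathcal{L}},{\mathcal{B}}_{2}\big)^{-1}\big\|\,a\,\exp(-\gamma t)$ for $t\ge0$, which is (\ref{f-ineq}) after renaming the constant $a$. The step I expect to be the main obstacle is the bookkeeping in the computation of $B$ — in particular, recognizing that ${\mathcal{B}}_{1}$ lies in the column span of $\big({\mathcal{L}},{\mathcal{B}}_{2}\big)$ with the explicit coefficients $S$ and ${\mathcal{H}}S$ found above; once this is settled, the forms of $B_{1}$ and $B_{2}$, the block-diagonal structure of $D$, and the sign conditions $D_{1}\ge0$, $D_{2}>0$ all follow by direct substitution together with (A2) and (A5).
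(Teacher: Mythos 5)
Your proposal is correct. The paper does not actually write out a proof of this lemma --- it simply invokes Lemma~1 of \cite{Glizer-Kelis2015} --- so your direct computation is exactly the verification that the paper delegates to that reference: the substitution $Z=(\mathcal{L},\mathcal{B}_{2})z$ giving $A$, $B$, $f$, $z_{0}$ and $D$; the cancellation $\mathcal{B}_{2}^{T}\mathcal{D}\mathcal{L}=\mathcal{B}_{2}^{T}\mathcal{D}\widetilde{\mathcal{B}}_{c}-(\mathcal{B}_{2}^{T}\mathcal{D}\mathcal{B}_{2})\mathcal{H}=0$ forced by the definition of $\mathcal{H}$ in (\ref{state-trans-L}); the identity $\mathcal{B}_{1}=\mathcal{L}S+\mathcal{B}_{2}(\mathcal{H}S)$ with $S=\mathrm{col}\,(O_{(n-r)\times q},I_{q})$, which together with the row partition of $(\mathcal{L},\mathcal{B}_{2})^{-1}$ yields (\ref{newmatrB1})--(\ref{newmatrB2}); and the sign conditions and the bound (\ref{f-ineq}) from (A2), (A5), (A4). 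All the block dimensions and identities you use check out, so nothing is missing.
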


\begin{remark}
\label{new-problem} In the optimal control problem (\ref{new-eq})-(\ref%
{new-perf-ind}), the cost functional $J(u)$ is minimized by the control $%
u(t) $. Since the weight matrix of the control cost in the cost functional $%
J(u)$ is singular, the solution (if any) of this game can be obtained
neither by the Pontriagin's Maximum Principle nor by the
Hamilton-Jacobi-Bellman equation method, meaning that the problem (\ref%
{new-eq})-(\ref{new-perf-ind}) is singular. The set $M_{u}$ of admissible
state-feedback controls $u(z,t)$ in the problem (\ref{new-eq})-(\ref%
{new-perf-ind}) is defined similarly to such a set $\mathcal{M}_{U}$ in the
problem (\ref{or-eq})-(\ref{perf-ind}). The infimum
\begin{equation}  \label{inf-J}
J^{*} \overset{\triangle}{=} \inf_{u(z,t)\in M_{u}}J\big(u(z,t)\big)
\end{equation}
is nonnegative. Moreover, if ${M}_{u}\neq\emptyset$, this infimum is finite.
The minimizing control sequence $\{u_{k}(z,t)\}$ and the optimal
state-feedback control $u^{*}(z,t)$ in this problem are defined similarly to
those in the problem (\ref{or-eq})-(\ref{perf-ind}), (see (\ref{minim-seq})
and (\ref{opt-minim-strat-def}), respectively).
\end{remark}

\subsection{Equivalence of the problems (\protect\ref{or-eq})-(\protect\ref%
{perf-ind}) and (\protect\ref{new-eq})-(\protect\ref{new-perf-ind}) \label%
{Sec2.4}}

\begin{lemma}
\label{nonempty}Let the assumptions (A1), (A2), (A5) be valid. Then, the
sets $M_{u}$ and ${\mathcal{M}}_{U}$ of the admissible state-feedback
controls in the problems (\ref{new-eq})-(\ref{new-perf-ind}) and (\ref{or-eq}%
)-(\ref{perf-ind}), respectively, are either both nonempty or both empty.
\end{lemma}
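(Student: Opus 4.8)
The plan is to exploit the fact that the transformation (\ref{state-trans0}) is a fixed nonsingular linear map $T\overset{\triangle}{=}({\mathcal{L}},{\mathcal{B}}_{2})$ between the state spaces of the two problems, and that the control is simply redenoted (the same $U$ plays the role of $u$), so the two control problems are, in a precise sense, the same problem written in two coordinate systems. Concretely, I would show that $U(\cdot,\cdot)\mapsto u(z,t)\overset{\triangle}{=}U\big(Tz,t\big)$ is a bijection from ${\mathcal{M}}_{U}$ onto $M_{u}$, with inverse $u(\cdot,\cdot)\mapsto U(Z,t)\overset{\triangle}{=}u\big(T^{-1}Z,t\big)$. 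Once this bijection is established, nonemptiness of one set immediately forces nonemptiness of the other, which is exactly the claimed dichotomy.

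First I would check that the correspondence $U\leftrightarrow u$ preserves membership in the function class ${\mathcal{P}}$: measurability in $t$ is unaffected by composing with the constant-in-$t$ linear map $T$ (or $T^{-1}$), and the local Lipschitz property in the spatial variable, uniformly in $t$, is preserved because composition with a fixed bounded linear invertible map sends local Lipschitz functions to local Lipschitz functions (a ball in $z$ maps to a bounded set in $Z$, and the Lipschitz constants scale by $\|T\|$ or $\|T^{-1}\|$). Next I would verify the three admissibility conditions of Definition \ref{admiss-def-auxil}. For condition (a): if $Z(t)$ is the unique locally absolutely continuous solution of (\ref{or-eq}) with $U(t)=U(Z,t)$, then $z(t)\overset{\triangle}{=}T^{-1}Z(t)$ is locally absolutely continuous, satisfies $z(0)=z_{0}=T^{-1}Z_{0}$, and by Lemma \ref{init-probl-transf} solves (\ref{new-eq}) with $u(t)=u(z(t),t)=U(Tz(t),t)=U(Z(t),t)$; uniqueness transfers because $T^{-1}$ is a bijection on solution sets, and symmetrically in the other direction. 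For conditions (b) and (c): $Z(t)\in L^{2}[0,+\infty;E^{n})$ iff $z(t)=T^{-1}Z(t)\in L^{2}[0,+\infty;E^{n})$, since $\|T^{-1}Z\|$ and $\|Z\|$ are equivalent norms; and $U(Z(t),t)=u(z(t),t)$ as functions of $t$, so one is in $L^{2}[0,+\infty;E^{r})$ iff the other is. Thus $U\in{\mathcal{M}}_{U}$ if and only if the corresponding $u\in M_{u}$.

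Having shown the map $U\mapsto u$ carries ${\mathcal{M}}_{U}$ into $M_{u}$ and the map $u\mapsto U$ carries $M_{u}$ into ${\mathcal{M}}_{U}$, and that the two maps are mutually inverse, I conclude the sets are in bijection; in particular ${\mathcal{M}}_{U}\neq\emptyset\iff M_{u}\neq\emptyset$, equivalently both are empty or both are nonempty, which is the assertion of Lemma \ref{nonempty}. (As a byproduct, along the corresponding controls one has $Z(t)=Tz(t)$, and a direct substitution using (\ref{new-matr-D}) and (\ref{new-perf-ind}) gives ${\mathcal{J}}(U)=J(u)$, so the infima agree as well — though the lemma as stated only needs the nonemptiness dichotomy.)

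The only genuinely nontrivial point, and the one I would treat with care rather than a one-line assertion, is the transfer of the existence-and-uniqueness statement in condition (a): one must make sure that feeding the feedback $U(Z,t)$ into (\ref{or-eq}) and feeding $u(z,t)=U(Tz,t)$ into (\ref{new-eq}) yield \emph{equivalent} closed-loop initial-value problems under $Z=Tz$, so that a unique locally absolutely continuous solution for one is equivalent to a unique such solution for the other. This follows from Lemma \ref{init-probl-transf}, which is precisely the statement that the change of variables $Z=Tz$ turns (\ref{or-eq}) into (\ref{new-eq}) with the redenoted control; I would just spell out that absolute continuity is preserved under multiplication by the constant matrices $T,T^{-1}$ and that the differential relation $\dot Z=T\dot z$ holds a.e., so the two closed-loop systems have solution sets related by the bijection $Z\leftrightarrow T^{-1}Z$. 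Everything else ($L^2$ membership, the function class ${\mathcal{P}}$, the bijection itself) is routine norm-equivalence bookkeeping.
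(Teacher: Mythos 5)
Your proposal is correct and follows exactly the route the paper takes: the paper's own proof is a one-line appeal to Lemma \ref{init-probl-transf}, the invertibility of the transformation (\ref{state-trans0}), and the definitions of ${\mathcal{M}}_{U}$ and $M_{u}$, and your argument simply fills in the details of that bijection (preservation of the class ${\mathcal{P}}$, transfer of existence/uniqueness, and the $L^{2}$ conditions). No gap; your write-up is a more explicit version of the same proof.
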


\begin{proof}
The statement of the lemma directly follows from Lemma \ref%
{init-probl-transf}, the invertibility of the transformation (\ref%
{state-trans0}), and the definitions of the sets $\mathcal{M}_{U}$ and $%
M_{u} $.
\end{proof}

\begin{lemma}
\label{equal-inf}Let the assumptions (A1), (A2), (A5) be valid. Let $%
M_{u}\neq\emptyset$. Then, the infimum values $J^{*}$ and $\mathcal{J}^{*}$
of the cost functionals in the problems (\ref{new-eq})-(\ref{new-perf-ind})
and (\ref{or-eq})-(\ref{perf-ind}), respectively, are finite and equal to
each other.
\end{lemma}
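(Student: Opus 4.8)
The plan is to build on Lemma~\ref{init-probl-transf}, which shows that the state change (\ref{state-trans0}) together with the relabeling $U\equiv u$ of the control carries (\ref{or-eq})-(\ref{perf-ind}) into (\ref{new-eq})-(\ref{new-perf-ind}). The key observation is that this transformation leaves the running cost pointwise invariant: since $D=\left({\mathcal{L}},{\mathcal{B}}_{2}\right)^{T}{\mathcal{D}}\left({\mathcal{L}},{\mathcal{B}}_{2}\right)$ by (\ref{new-matr-D}) and $Z(t)=\left({\mathcal{L}},{\mathcal{B}}_{2}\right)z(t)$, one has $Z^{T}(t){\mathcal{D}}Z(t)=z^{T}(t)Dz(t)$ for every $t\geq 0$, while $U^{T}(t)GU(t)$ is, after the relabeling, literally $u^{T}(t)Gu(t)$. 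Hence, for any pair of trajectories related by (\ref{state-trans0}), the integrands of ${\mathcal{J}}$ and $J$ agree for almost all $t\geq 0$, so ${\mathcal{J}}(U)=J(u)$.

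First I would make the correspondence between admissible state-feedback controls precise. Put $T_{0}=\left({\mathcal{L}},{\mathcal{B}}_{2}\right)$, a fixed nonsingular constant matrix (the transformation (\ref{state-trans0}) being nonsingular). To $U(Z,t)\in{\mathcal{M}}_{U}$ associate $u(z,t)\overset{\triangle}{=}U\big(T_{0}z,t\big)$, and to $u(z,t)\in M_{u}$ associate $U(Z,t)\overset{\triangle}{=}u\big(T_{0}^{-1}Z,t\big)$; these maps are mutually inverse. Composing with the invertible linear map $z\mapsto T_{0}z$ preserves measurability in $t$ and the local Lipschitz property in the spatial variable uniform in $t$, so this correspondence maps ${\mathcal{P}}$ into itself in both directions. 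Next, $z(t)$ is a locally absolutely continuous solution of the closed-loop equation (\ref{new-eq}) with feedback $u(z,t)$ and $z_{0}=T_{0}^{-1}Z_{0}$ if and only if $Z(t)=T_{0}z(t)$ is a solution of the closed-loop equation (\ref{or-eq}) with feedback $U(Z,t)$ and $Z_{0}$; thus existence, uniqueness and local absolute continuity on $[0,+\infty)$ transfer between the two problems. Finally, boundedness of $T_{0}$ and $T_{0}^{-1}$ gives $z(t)\in L^{2}[0,+\infty;E^{n}]\Leftrightarrow Z(t)\in L^{2}[0,+\infty;E^{n}]$, and $u\big(z(t),t\big)=U\big(Z(t),t\big)$ gives the equivalence of the $L^{2}$ condition on the control values. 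Therefore the correspondence $U\leftrightarrow u$ is a bijection between ${\mathcal{M}}_{U}$ and $M_{u}$.

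The conclusion is then immediate. For $U(Z,t)\in{\mathcal{M}}_{U}$ with closed-loop trajectory $Z(t)$, the associated $u(z,t)\in M_{u}$ has closed-loop trajectory $z(t)=T_{0}^{-1}Z(t)$, and by the pointwise identity of the integrands ${\mathcal{J}}\big(U(Z,t)\big)=J\big(u(z,t)\big)$. Taking the infimum over the two sets, which are in bijective correspondence under $U\leftrightarrow u$, yields ${\mathcal{J}}^{*}=J^{*}$ (see (\ref{inf-mathcal J}) and (\ref{inf-J})). Finiteness follows because $M_{u}\neq\emptyset$ implies, by Lemma~\ref{nonempty}, that ${\mathcal{M}}_{U}\neq\emptyset$; picking any $U\in{\mathcal{M}}_{U}$, the common value ${\mathcal{J}}^{*}=J^{*}$ lies in $[0,{\mathcal{J}}(U)]$, hence is finite (cf. Remark~\ref{finiteness-mathcal-J^*} and Remark~\ref{new-problem}).

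I do not anticipate a real difficulty here; the argument is essentially bookkeeping. The only point requiring care is checking that every clause of Definition~\ref{admiss-def-auxil} --- a unique locally absolutely continuous closed-loop solution on all of $[0,+\infty)$, and square-integrability of the state and of the control values --- is genuinely inherited under the linear state change and the relabeling of the control, and, in particular, that the closed-loop solutions of the two problems correspond one-to-one rather than merely for some of them. Since $T_{0}$ is a constant invertible matrix, each of these reductions is routine, so no genuine obstacle arises.
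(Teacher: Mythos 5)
Your proposal is correct and follows essentially the same route as the paper: the paper likewise transports controls across the invertible state change $(\mathcal{L},\mathcal{B}_{2})$, verifies that corresponding controls are admissible and give equal cost values, and deduces the two inequalities $\mathcal{J}^{\ast}\leq J^{\ast}$ and $J^{\ast}\leq\mathcal{J}^{\ast}$ (the paper phrases this via minimizing sequences rather than your explicit bijection, but the substance is identical). No gap.
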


\begin{proof}
The finiteness of $J^{*}$ and $\mathcal{J}^{*}$ follows immediately from the
condition $M_{u}\neq\emptyset$, Lemma \ref{nonempty}, and Remarks \ref%
{finiteness-mathcal-J^*} and \ref{new-problem}.

Proceed to the proof of the equality
\begin{equation}  \label{equal}
J^{*}=\mathcal{J}^{*}.
\end{equation}
Due to the definition of $J^{*}$, there exists a control sequence $%
\{u_{k}(z,t)\}$, $u_{k}(z,t)\in M_{u}$, $(k=1,2,...)$, such that
\begin{equation}  \label{lim-1-lem-2.3}
\lim_{k\rightarrow +\infty}J\big(u_{k}(z,t)\big)=J^{*}.
\end{equation}
Similarly, due to the definition of $\mathcal{J}^{*}$, there exists a
control sequence $\{\widehat{U}_{k}(Z,t)\}$, $\widehat{U}_{k}(Z,t)\in
\mathcal{M}_{U}$, $(k=1,2,...)$, such that
\begin{equation}  \label{lim-2-lem-2.3}
\lim_{k\rightarrow +\infty}\mathcal{J}\big(\widehat{U}_{k}(Z,t)\big)=%
\mathcal{J}^{*}.
\end{equation}

Let us define the controls
\begin{equation}
U_{k}(Z,t)\overset{\triangle }{=}u_{k}\big((\mathcal{L},\mathcal{B}%
_{2})^{-1}Z,t\big),\ \ \ \ \hat{u}_{k}(z,t)\overset{\triangle }{=}\widehat{U}%
_{k}\big((\mathcal{L},\mathcal{B}_{2})z,t\big),\ \ \ k=1,2,...\ .
\label{new-strategies}
\end{equation}%
Using Lemma \ref{init-probl-transf}, the invertibility of the transformation
(\ref{state-trans0}) and the definitions of the sets $\mathcal{M}_{U}$ and $%
M_{u}$, one directly obtains that
\begin{equation}
U_{k}(Z,t)\in \mathcal{M}_{U},\ \ \ \ \hat{u}_{k}(z,t)\in M_{u},\ \ \
k=1,2,...\ ,  \label{belong-1}
\end{equation}%
and
\begin{equation}
\mathcal{J}\big(U_{k}(Z,t)\big)=J\big(u_{k}(z,t)\big),\ \ \ \ J\big(\hat{u}%
_{k}(z,t)\big)=\mathcal{J}\big(\widehat{U}_{k}(Z,t)\big),\ \ \ k=1,2,...\ .
\label{equals-1}
\end{equation}%
The equations (\ref{lim-1-lem-2.3})-(\ref{lim-2-lem-2.3}) and (\ref{equals-1}%
), as well as the facts that ${\mathcal{J}}^{\ast }$ is the infimum of the
cost functional in the problem (\ref{or-eq})-(\ref{perf-ind}) and $J^{\ast }$
is the infimum of the cost functional in the problem (\ref{new-eq})-(\ref%
{new-perf-ind}), imply the inequalities
\begin{equation}
\mathcal{J}^{\ast }\leq J^{\ast },\ \ \ \ J^{\ast }\leq \mathcal{J}^{\ast },
\label{inequals-1}
\end{equation}%
which yield the equality (\ref{equal}). Thus, the lemma is proven.
\end{proof}

In the sequel of this paper, we deal with the optimal control problem (\ref%
{new-eq})-(\ref{new-perf-ind}). We call this problem the Original Optimal
Control Problem (OOCP). As it was mentioned above, the OOCP is singular.
Moreover, this problem does not have, in general, an optimal control among
regular functions.

\subsection{Objectives of the paper \label{Sec2.5}}

The objectives of this paper are: \newline
(I) to establish finiteness of the infimum of the cost functional in the
OOCP;\newline
(II) to derive an expression for this infimum;\newline
(III) to design a minimizing sequence of state-feedback controls in the
OOCP. 

\section{Regularization of the OOCP}

\subsection{Partial cheap control problem}

Consider the optimal control problem with the dynamics (\ref{new-eq}) and
the performance index
\begin{equation}
J_{\varepsilon}(u)\overset{\triangle}{=}\int\limits_{0}^{+ \infty }\left[
z^{T}(t)Dz(t)+u^{T}(t)(G+\mathcal{E})u(t)\right]dt\rightarrow\min_{u},
\label{perf-ind-auxil-pr-regul}
\end{equation}
where
\begin{equation}
{\mathcal{E}}=\mathrm{diag}\Big(\underbrace{0,...,0}_{q},\underbrace{%
\varepsilon ^{2},...,\varepsilon ^{2}}_{r-q}\Big),  \label{mathcal-E}
\end{equation}
and $\varepsilon >0$ is a small parameter.

\begin{remark}
Since the parameter $\varepsilon >0$ is small, the problem (\ref{new-eq}), (%
\ref{perf-ind-auxil-pr-regul}) is a partial cheap control problem, i.e., an
optimal control problem where a cost of some control coordinates in the cost
functional is much smaller than costs of the state and the other control
coordinates. In what follows, we call this problem the Partial Cheap Control
Problem (PCCP). The case of the completely cheap control was widely studied
in the literature for various control problems (see e.g. \cite%
{Glizer2014,omal-jam-77,sab-san-87,ser-br-kok-may-99,Glizer1999,sme-sob-05,Glizer2009}
and references therein). However, to the best of the authors' knowledge, the
partial cheap control case was analyzed only in two works. Namely, in \cite%
{OReil83} a two-time-scale decomposition of the time-invariant regulator
problem with partial cheap control was carried out, yielding a near-optimal
composite control. In \cite{Glizer-Kelis2015}, a finite-horizon zero-sum
linear-quadratic differential game with partial cheap control of the
minimizing player was studied.
\end{remark}

\subsection{Optimal state-feedback control of the PCCP}

We look for such a control in the same set of the admissible controls as was
introduced earlier for the OOCP, i.e., in the set $M_{u}$.

Consider the algebraic matrix Riccati equation
\begin{equation}
PA+A^{T}P-PS(\varepsilon)P+D=0,\   \label{eq-P}
\end{equation}
where
\begin{equation}
S(\varepsilon )=B(G+\mathcal{E})^{-1}B^{T}.  \label{S_u}
\end{equation}

By virtue of the inequality (\ref{f-ineq}) and the results of \cite%
{Salukvadze-62}, if for a given $\varepsilon >0$ the equation (\ref{eq-P})
has a symmetric solution $P^{*}(\varepsilon)\ge 0$ such that the matrix
\begin{equation}  \label{mathcal-A}
\mathcal{A}(\varepsilon)\overset{\triangle}{=} A-S(\varepsilon)P^{*}(%
\varepsilon)
\end{equation}
is a Hurwitz one, then the optimal control of the PCCP exists in the set of
the admissible state-feedback controls $M_{u}$. This control is unique and
it has the form
\begin{equation}  \label{opt-contr-auxil}
u^{*}_{\varepsilon}(z,t) = -(G+\mathcal{E})^{-1}B^{T}P^{*}(\varepsilon)z -
(G+\mathcal{E})^{-1}B^{T}h(t),
\end{equation}
where the $n$-dimensional vector-valued function $h(t)$, $t\in[0,+\infty)$
is the unique solution of the terminal-value problem
\begin{equation}  \label{dif-eq-h}
\frac{dh(t)}{dt}=-\mathcal{A}^{T}(\varepsilon)h(t)-P^{*}(\varepsilon)f(t),\
\ \ \ \ h(+\infty)=0.
\end{equation}

The optimal value of the cost functional in the PCCP has the form
\begin{equation}  \label{opt-tilde-J}
J_{\varepsilon}^{*}= z_{0}^{T}P^{*}(\varepsilon)z_{0} +2h^{T}(0)z_{0}+s(0),
\end{equation}
where the scalar function $s(t)$, $t\in[0,+\infty)$ is the unique solution
of the terminal-value problem
\begin{equation}  \label{eq-s}
\frac{ds(t)}{dt}=-2h^{T}(t)f(t)+h^{T}(t)S(\varepsilon)h(t),\ \ \ \
s(+\infty)=0.
\end{equation}

\section{Asymptotic Analysis of the PCCP}

\subsection{Asymptotic solution of the equation (\protect\ref{eq-P})}

First of all, let as note that the matrix $S(\varepsilon )$, appearing in
this equation, can be represented (similarly to the results of \cite%
{Glizer-Kelis2015}) in the following block form:
\begin{equation}
S(\varepsilon )=\left(
\begin{array}{lr}
S_{1}\ \  & S_{2} \\
&  \\
S_{2}^{T}\ \  & (1/\varepsilon ^{2})S_{3}(\varepsilon )%
\end{array}%
\right),  \label{S_u-new-form}
\end{equation}
where
\begin{eqnarray}
S_{1}=\left(
\begin{array}{l}
O_{(n-r)\times (n-r)}\ \ O_{(n-r)\times q} \\
O_{q\times (n-r)}\ \ \ \ \ \ \ \widetilde{G}^{-1}%
\end{array}%
\right),\ \ \ S_{2}=\left(%
\begin{array}{c}
O_{(n-r)\times (r-q)} \\
H_{1}%
\end{array}%
\right),  \notag \\
\   \notag \\
S_{3}(\varepsilon)=\varepsilon ^{2}H_{2}+I_{r-q},  \label{eqL-1}
\end{eqnarray}
\begin{eqnarray}
\widetilde{G}=\mathrm{diag}\big(g_{1},...,g_{q}\big),\ \ H_{1}=H_{3}{%
\mathcal{H}}^{T},\ \ H_{2}={\mathcal{H}}\left(%
\begin{array}{c}
O_{(n-r)\times (r-q)} \\
H_{1}%
\end{array}%
\right),  \notag \\
\   \notag \\
H_{3}=\Big(O_{q\times (n-r)},\widetilde{G}^{-1}\Big),  \label{eqH1H2}
\end{eqnarray}
${\mathcal{H}}$ is defined in (\ref{state-trans-L}).

Due to (\ref{S_u-new-form})-(\ref{eqH1H2}), the left-hand side of the
equation (\ref{eq-P}) has a singularity at $\varepsilon =0$. To remove this
singularity, we seek the symmetric solution $P(\varepsilon )$ of the
equation (\ref{eq-P}) in the block form
\begin{equation}
P(\varepsilon )=\left(%
\begin{array}{cc}
P_{1}(\varepsilon )\  & \varepsilon P_{2}(\varepsilon ) \\
&  \\
\varepsilon P_{2}^{T}(\varepsilon ) & \ \varepsilon P_{3}(\varepsilon)%
\end{array}%
\right) ,  \label{tilde-P-block-form}
\end{equation}
where the blocks $P_{1}(\varepsilon )$, $P_{2}(\varepsilon )$ and $%
P_{3}(\varepsilon )$ have the dimensions $(n-r+q)\times (n-r+q)$, $%
(n-r+q)\times \left( r-q\right) $ and $\left(r-q\right) \times \left(
r-q\right) $, respectively; and
\begin{equation}  \label{symmetry-tilde-P_1-P_3}
P_{1}^{T}(\varepsilon )=P_{1}(\varepsilon ),\ \ \ P_{3}^{T}(\varepsilon
)=P_{3}(\varepsilon ).
\end{equation}

We also partition the matrix $A$ into blocks as follows:
\begin{equation}  \label{A-block-form}
A=\left(
\begin{array}{cc}
A_{1}\  & A_{2} \\
&  \\
A_{3}\ \  & A_{4}%
\end{array}%
\right),
\end{equation}
where the blocks $A_{1}$, $A_{2}$, $A_{3}$ and $A_{4}$ have the dimensions $%
(n-r+q)\times (n-r+q)$, $(n-r+q)\times \left( r-q\right)$, $(r-q)\times
(n-r+q)$ and $\left(r-q\right) \times \left( r-q\right)$, respectively.

Substitution of the block representations for the matrices $D$, $%
S(\varepsilon)$, $P(\varepsilon)$, and $A$ (see (\ref{new-matr-D}), (\ref%
{S_u-new-form}), (\ref{tilde-P-block-form}), and (\ref{A-block-form})) into
the equation (\ref{eq-P}) yields after a routine rearrangement the following
equivalent set of Riccati-type algebraic matrix equations with respect to $%
P_{1}(\varepsilon)$, $P_{2}(\varepsilon)$ and $P_{3}(\varepsilon)$:
\begin{eqnarray}
P_{1}(\varepsilon )A_{1}+\varepsilon P_{2}(\varepsilon
)A_{3}+A_{1}^{T}P_{1}(\varepsilon )+\varepsilon
A_{3}^{T}P_{2}^{T}(\varepsilon ) -P_{1}(\varepsilon )S_{1}P_{1}(\varepsilon)
\notag \\
-\varepsilon P_{2}(\varepsilon )S_{2}^{T}P_{1}(\varepsilon )-\varepsilon
P_{1}(\varepsilon )S_{2}P_{2}^{T}(\varepsilon ) - P_{2}(\varepsilon
)S_{3}(\varepsilon)P_{2}^{T}(\varepsilon )+D_{1}=0,  \label{eq-P_1}
\end{eqnarray}
\begin{eqnarray}
P_{1}(\varepsilon )A_{2}+\varepsilon P_{2}(\varepsilon )A_{4}+\varepsilon
A_{1}^{T}P_{2}(\varepsilon )+\varepsilon A_{3}^{T}P_{3}(\varepsilon )
-\varepsilon P_{1}(\varepsilon )S_{1}P_{2}(\varepsilon )  \notag \\
- \varepsilon^{2}P_{2}(\varepsilon ) S_{2}^{T} P_{2}(\varepsilon
)-\varepsilon P_{1}(\varepsilon )S_{2}P_{3}(\varepsilon ) -P_{2}(\varepsilon
)S_{3}(\varepsilon)P_{3}(\varepsilon )=0,  \label{eq-P_2}
\end{eqnarray}
\begin{eqnarray}
\varepsilon P_{2}^{T}(\varepsilon )A_{2}+\varepsilon P_{3}(\varepsilon
)A_{4}+\varepsilon A_{2}^{T}P_{2}(\varepsilon )+\varepsilon
A_{4}^{T}P_{3}(\varepsilon ) -\varepsilon ^{2}P_{2}^{T}(\varepsilon
)S_{1}P_{2}(\varepsilon )  \notag \\
-\varepsilon ^{2}P_{3}(\varepsilon )S_{2}^{T}P_{2}(\varepsilon )-\varepsilon
^{2}P_{2}^{T}(\varepsilon )S_{2}P_{3}(\varepsilon ) - P_{3}(\varepsilon
)S_{3}(\varepsilon)P_{3}(\varepsilon )+D_{2}=0.  \label{eq-P_3}
\end{eqnarray}

We seek the asymptotic solution $P_{i,m}^{\mathrm{as}}(\varepsilon )$, $%
(i=1,2,3)$ of the system (\ref{eq-P_1})-(\ref{eq-P_3}) in the form $P_{i,m}^{%
\mathrm{as}}(\varepsilon )=P_{i0}+\varepsilon P_{i1}+...+\varepsilon
^{m}P_{i,m}$, where $m\geq 0$ is a given integer. In what follows, we
restrict ourselves to the case of zero-order asymptotic solution, i.e., $m=0$
and
\begin{equation}
P_{i,0}^{\mathrm{as}}(\varepsilon )=P_{i0},\ \ \ i=1,2,3.
\label{P-asympt-sol}
\end{equation}%
Equations for the zero-order asymptotic solution terms are obtained by
substitution of (\ref{P-asympt-sol}) into (\ref{eq-P_1})-(\ref{eq-P_3})
instead of $P_{i}(\varepsilon )$, $(i=1,2,3)$, and equating coefficients for
the zero power of $\varepsilon$ on both sides of the resulting equations.
Thus, we have the following system:
\begin{equation}
P_{10}A_{1}+A_{1}^{T}P_{10}-P_{10}S_{1}P_{10} - P_{20}P_{20}^{T}+D_{1}=0,
\label{eq-tilde-P_10^o}
\end{equation}
\begin{equation}
P_{10}A_{2}-P_{20}P_{30}=0,  \label{eq-tilde-P_20^o}
\end{equation}
\begin{equation}
\big( P_{30}\big)^{2}-D_{2}=0.  \label{eq-tilde-P_30^o}
\end{equation}
Solving the equations (\ref{eq-tilde-P_30^o}) and (\ref{eq-tilde-P_20^o})
with respect to $P_{3 0}$ and $P_{2 0}$, we obtain
\begin{equation}  \label{tilde-P_320}
P_{3 0}=P_{3 0}^{*}\overset{\triangle}{=}\big(D_{2}\big)^{1/2},\ \ \ \ P_{2
0}=P_{1 0}A_{2}\big(D_{2}\big)^{-1/2},
\end{equation}
where the superscript $"1/2"$ denotes the unique symmetric positive definite
square root of the corresponding symmetric positive definite matrix, while
the superscript $"-1/2"$ denotes the inverse matrix for such a square root.

Since $\big(D_{2}\big)^{1/2}$ is positive definite, then there exists a
positive number $\beta$ such that all eigenvalues $\lambda\Big(-\big(D_{2}%
\big)^{1/2}\Big)$ of the matrix $-\big(D_{2}\big)^{1/2}$ satisfy the
inequality
\begin{equation}  \label{eigenval-ineq-1}
\mathrm{Re}\lambda\Big(-\big(D_{2}\big)^{1/2}\Big)< -\beta.
\end{equation}

Substitution of the expression for $P_{2 0}$ from (\ref{tilde-P_320}) into (%
\ref{eq-tilde-P_10^o}) yields the algebraic matrix Riccati equation with
respect to $P_{1 0}$
\begin{equation}
P_{10}A_{1}+A_{1}^{T}P_{10}-P_{10}{S}_{0}P_{10}+D_{1}=0,
\label{eq-P_10^o-new-form}
\end{equation}%
where
\begin{equation}
{S}_{0}=A_{2}D_{2}^{-1}A_{2}^{T}+S_{1}.  \label{S_1^o}
\end{equation}
Based on the results of \cite{Glizer-Kelis2015}, we can represent the matrix
$S_{0}$ in the form
\begin{equation}
{S}_{0}=\bar{B}\Theta ^{-1}\bar{B}^{T},  \label{S_1^o-new-repr}
\end{equation}
where
\begin{equation}
\bar{B}=\left(
\begin{array}{l}
\widetilde{B}\ ,\ A_{2}%
\end{array}%
\right),\ \ \ \ \widetilde{B}=\left(
\begin{array}{c}
O_{\left( n-r\right) \times q} \\
I_{q}%
\end{array}%
\right),  \label{B^o}
\end{equation}
\begin{equation}
\Theta =\left(
\begin{array}{cc}
\widetilde{G} & O_{q\times \left( r-q\right) } \\
O_{\left( r-q\right) \times q} & D_{2}%
\end{array}%
\right).  \label{Theta}
\end{equation}

Let $F_{1}$ be a matrix such that
\begin{equation}
D_{1}=F_{1}^{T}F_{1}.  \label{F_1^TF_1}
\end{equation}

In what follows, we assume: \newline
\textbf{(A6)} The pair $(A_{1},\bar{B})$ is stabilizable; \newline
\textbf{(A7)} the pair $(A_{1},F_{1})$ is detectable.

Using the equation (\ref{S_1^o-new-repr}), the assumptions (A6), (A7), and
the results of \cite{Anders-Moore1971}, one directly obtains that the
algebraic matrix Riccati equation (\ref{eq-P_10^o-new-form}) has the unique
symmetric solution $P_{1 0}^{*}\ge 0$. Moreover, the matrix
\begin{equation}  \label{matr-tildA_10}
\mathcal{A}_{0}\overset{\triangle}{=}A_{1}-S_{0}P_{1 0}^{*}
\end{equation}
is a Hurwitz one. Therefore, there exists a positive number $\alpha$, ($%
\alpha\ne\gamma$), such that all eigenvalues $\lambda\Big(\mathcal{A}_{1 0}%
\Big)$ of this matrix satisfy the inequality
\begin{equation}  \label{eigenval-ineq-2}
\mathrm{Re}\lambda\Big(\mathcal{A}_{0}\Big)<-\alpha.
\end{equation}

Now, using Lemma \ref{init-probl-transf} (the positive definiteness of the
matrix $D_{2}$), the above mentioned features of the equation (\ref%
{eq-P_10^o-new-form}), and the results of \cite{Kokotovic} (Sections 3.4 and
3.6.1), we can state the following:

\begin{lemma}
\label{justif-asymp-sol-Riccati}Let the assumptions (A1)-(A3), (A5)-(A7) be
valid. Then, there exists a positive number $\varepsilon_{0}$ such that for
all $\varepsilon\in[0,{\varepsilon}_{0}]$ the equation (\ref{eq-P}) has the
unique symmetric solution $P^{*}(\varepsilon)\ge 0$. This solution has the
block form
\begin{equation}
P^{\ast }(\varepsilon )=\left(
\begin{array}{cc}
P_{1}^{\ast }(\varepsilon ) & \varepsilon{\ P}_{2}^{\ast }(\varepsilon ) \\
\varepsilon \big(P_{2}^{\ast }(\varepsilon )\big)^{T} & \varepsilon
P_{3}^{\ast }(\varepsilon )%
\end{array}%
\right) ,  \label{P^*(eps)-block}
\end{equation}%
where the blocks $P_{1}^{\ast }(\varepsilon )$, $P_{2}^{\ast }(\varepsilon )$%
, $P_{3}^{\ast }(\varepsilon )$ are of dimensions $(n-r+q)\times (n-r+q)$, $%
(n-r+q)\times(r-q)$, $(r-q)\times(r-q)$, respectively. These blocks satisfy
the inequalities
\begin{equation}
\Vert P_{i}^{\ast }(\varepsilon )-P_{i0}^{\ast }\Vert \leq a\varepsilon ,\ \
\ i=1,2,3,\ \ \ \varepsilon \in [0,{\varepsilon}_{0}],
\label{ineq-tilde(P-P_0)}
\end{equation}%
where $P_{2 0}^{*}=P_{1 0}^{*}A_{2}\big(D_{2}\big)^{-1/2}$; $a>0$ is some
constant independent of $\varepsilon$; $\|\cdot\|$ denotes the Euclidean
norm either of a matrix, or of a vector.

Moreover, the matrix $\mathcal{A}(\varepsilon)$, given by (\ref{mathcal-A}),
is a Hurwitz one.
\end{lemma}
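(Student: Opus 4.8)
The plan is to treat the block system \eqref{eq-P_1}--\eqref{eq-P_3} as a singularly perturbed algebraic Riccati system and invoke the implicit function theorem around the zero-order solution, together with the standard perturbation theory for the fast subsystem. First I would rewrite \eqref{eq-P_3} by dividing through by $\varepsilon$, obtaining an equation whose $\varepsilon=0$ limit is exactly \eqref{eq-tilde-P_30^o}; this is the "fast" Riccati-type equation, and its relevant root is $P_{30}^{*}=(D_2)^{1/2}$, singled out because $D_2>0$ (Lemma \ref{init-probl-transf}) makes $-(D_2)^{1/2}$ Hurwitz, cf.\ \eqref{eigenval-ineq-1}. Then I would use \eqref{eq-P_2}, again after cancelling a factor of $\varepsilon$ where possible, to solve for $P_2$ in terms of $P_1$ and $P_3$, recovering at $\varepsilon=0$ the relation $P_{20}=P_{10}A_2(D_2)^{-1/2}$. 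Substituting into \eqref{eq-P_1} produces the "slow" Riccati equation, whose $\varepsilon=0$ limit is \eqref{eq-P_10^o-new-form}; by assumptions (A6)--(A7), the representation \eqref{S_1^o-new-repr}, and the standard results of \cite{Anders-Moore1971}, this equation has a unique symmetric $P_{10}^{*}\ge0$ with $\mathcal{A}_0=A_1-S_0P_{10}^{*}$ Hurwitz, which is \eqref{eigenval-ineq-2}.

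Having identified the nominal root $(P_{10}^{*},P_{20}^{*},P_{30}^{*})$, I would set up the map $\Phi(P_1,P_2,P_3,\varepsilon)$ given by the left-hand sides of the (rescaled) equations \eqref{eq-P_1}--\eqref{eq-P_3}, viewed as a function on the space of symmetric (for $P_1,P_3$) matrices. The key computation is the Fréchet derivative $\partial\Phi/\partial(P_1,P_2,P_3)$ at $\varepsilon=0$: it is block triangular, with diagonal blocks the Lyapunov-type operators $X\mapsto \mathcal{A}_0^{T}X+X\mathcal{A}_0$ on the $P_1$ slot, $X\mapsto -(P_{30}X+XP_{30}) = -((D_2)^{1/2}X + X(D_2)^{1/2})$ on the $P_3$ slot, and an invertible algebraic (non-differential) block $X\mapsto -P_{30}X$ essentially on the $P_2$ slot. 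Each of these is invertible: the first because $\mathcal{A}_0$ is Hurwitz, the second and third because $(D_2)^{1/2}$ is positive definite, hence has no eigenvalues summing to zero and is itself nonsingular. Therefore the full derivative is an isomorphism, and the implicit function theorem yields, for $\varepsilon$ in some $[0,\varepsilon_0]$, a unique $C^1$ (indeed analytic, since $\Phi$ is polynomial in all arguments) branch $P_i^{*}(\varepsilon)$ with $P_i^{*}(0)=P_{i0}^{*}$; the $C^1$ bound gives the estimate \eqref{ineq-tilde(P-P_0)}. Reassembling into \eqref{tilde-P-block-form} gives a symmetric solution $P^{*}(\varepsilon)$ of \eqref{eq-P}; symmetry and the sign condition $P^{*}(\varepsilon)\ge0$ follow from the construction (symmetry is preserved by the IFT on the symmetric subspace, and positive semidefiniteness follows for small $\varepsilon$ from $P_{10}^{*}\ge0$, $P_{30}^{*}>0$, and continuity, or alternatively from the optimal-control interpretation once the Hurwitz property is in hand). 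Uniqueness of the stabilizing solution of \eqref{eq-P} among symmetric positive semidefinite matrices is the standard Riccati fact once $S(\varepsilon)\ge0$ and an appropriate detectability/stabilizability condition for the full pair is checked; alternatively, invoke directly \cite{Kokotovic} (Sections 3.4 and 3.6.1), which packages exactly this singularly perturbed Riccati analysis.

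Finally, for the Hurwitz property of $\mathcal{A}(\varepsilon)=A-S(\varepsilon)P^{*}(\varepsilon)$, I would exploit the two-time-scale structure: written in the fast variable, the closed-loop matrix has the singularly perturbed form with fast dynamics governed (to leading order) by $-(D_2)^{1/2}$ — Hurwitz by \eqref{eigenval-ineq-1} — and slow dynamics by $\mathcal{A}_0$ — Hurwitz by \eqref{eigenval-ineq-2}. The standard singular-perturbation stability lemma (again \cite{Kokotovic}) then guarantees $\mathcal{A}(\varepsilon)$ is Hurwitz for all sufficiently small $\varepsilon>0$, possibly after shrinking $\varepsilon_0$.

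I expect the main obstacle to be the bookkeeping in verifying that the linearized operator is genuinely block triangular with invertible diagonal blocks — in particular, checking that the $P_2$-equation \eqref{eq-P_2} really does determine $P_2$ algebraically (not differentially) at $\varepsilon=0$ after the correct power of $\varepsilon$ is extracted, so that its linearization contributes an invertible multiplication operator rather than a Lyapunov operator; and, relatedly, making sure the rescaling of \eqref{eq-P_2}--\eqref{eq-P_3} is done consistently so that no hidden $1/\varepsilon$ singularity remains. Everything else is either a direct appeal to \cite{Anders-Moore1971}, \cite{Kokotovic}, or routine continuity arguments.
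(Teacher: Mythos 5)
Your proposal is correct and follows essentially the same route as the paper: the paper gives no standalone proof of this lemma, deriving it from the preceding zero-order construction (the fast root $P_{30}^{*}=(D_2)^{1/2}$, the algebraic relation $P_{20}^{*}=P_{10}^{*}A_2(D_2)^{-1/2}$, and the reduced Riccati equation \eqref{eq-P_10^o-new-form} handled via (A6)--(A7) and \cite{Anders-Moore1971}) together with a direct appeal to \cite{Kokotovic}, Sections 3.4 and 3.6.1 -- exactly the ingredients you identify. Your implicit-function-theorem elaboration (with the Schur-type elimination producing the invertible Lyapunov operators associated with $\mathcal{A}_0$ and $-(D_2)^{1/2}$) is a correct and more self-contained fleshing-out of what the paper delegates to that citation.
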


\subsection{Asymptotic solution of the problem (\protect\ref{dif-eq-h})}

Using the equations (\ref{mathcal-A}) and (\ref{S_u-new-form}), (\ref%
{A-block-form}), (\ref{P^*(eps)-block}), we can represent the matrix $%
\mathcal{A}(\varepsilon)$ in the block form
\begin{equation}  \label{mathcal-A-blocks}
\mathcal{A}(\varepsilon)= \left(%
\begin{array}{l}
{\mathcal{A}}_{1}(\varepsilon)\ \ \ \ \ \ \ \ \ \ \ \ \ {\mathcal{A}}%
_{2}(\varepsilon) \\
(1/\varepsilon){\mathcal{A}}_{3}(\varepsilon)\ \ \ \ (1/\varepsilon){%
\mathcal{A}}_{4}(\varepsilon)%
\end{array}%
\right),
\end{equation}
where
\begin{equation}  \label{mathcal-A_1}
{\mathcal{A}}_{1}(\varepsilon)=A_{1}- S_{1}P^{*}_{1}(\varepsilon)-
\varepsilon S_{2}\big(P^{*}_{2}(\varepsilon)\big)^{T},
\end{equation}
\begin{equation}  \label{mathcal-A_2}
{\mathcal{A}}_{2}(\varepsilon)=A_{2}-\varepsilon
S_{1}P^{*}_{2}(\varepsilon)- \varepsilon S_{2}P^{*}_{3}(\varepsilon),
\end{equation}
\begin{equation}  \label{mathcal-A_3}
{\mathcal{A}}_{3}(\varepsilon)=\varepsilon A_{3}- \varepsilon
S_{2}P^{*}_{1}(\varepsilon)-S_{3}(\varepsilon)\big(P^{*}_{2}(\varepsilon)%
\big)^{T},
\end{equation}
\begin{equation}  \label{mathcal-A_4}
{\mathcal{A}}_{4}(\varepsilon)=\varepsilon
A_{4}-\varepsilon^{2}S_{2}^{T}P^{*}_{2}(\varepsilon)-
S_{3}(\varepsilon)P^{*}_{3}(\varepsilon).
\end{equation}

Also, let us partition the vector-valued function $f(t)$ into blocks as
follows:
\begin{equation}  \label{f-block}
f(t)=\left(%
\begin{array}{c}
f_{1}(t) \\
f_{2}(t)%
\end{array}%
\right),\ \ \ \ t\ge 0,
\end{equation}
where the blocks $f_{1}(t)$ and $f_{2}(t)$ are of the dimensions $n-r+q$ and
$r-q$, respectively.

We look for the solution of the problem (\ref{dif-eq-h}) in the block form
\begin{equation}  \label{h-block}
h(t,\varepsilon)=\left(%
\begin{array}{l}
h_{1}(t,\varepsilon) \\
\varepsilon h_{2}(t,\varepsilon)%
\end{array}%
\right),\ \ \ \ t \ge 0,
\end{equation}
where the blocks $h_{1}(t,\varepsilon)$ and $h_{2}(t,\varepsilon)$ are of
the dimensions $n-r+q$ and $r-q$, respectively.

Substitution of the block representations for $\mathcal{A}(\varepsilon)$, $%
P^{*}(\varepsilon)$, $f(t)$ and $h(t,\varepsilon)$ into the problem (\ref%
{dif-eq-h}) yields the following initial-value problem, equivalent to (\ref%
{dif-eq-h}):
\begin{equation}  \label{eq-h_1}
\frac{dh_{1}(t,\varepsilon)}{dt}=-{\mathcal{A}}_{1}^{T}(\varepsilon)h_{1}(t,%
\varepsilon)-{\mathcal{A}}_{3}^{T}(\varepsilon)h_{2}(t,\varepsilon)-
P_{1}^{*}(\varepsilon)f_{1}(t)-\varepsilon P_{2}^{*}(\varepsilon)f_{2}(t),
\end{equation}
\begin{equation}  \label{eq-h_2}
\varepsilon\frac{dh_{2}(t,\varepsilon)}{dt}=-{\mathcal{A}}%
_{2}^{T}(\varepsilon)h_{1}(t,\varepsilon)-{\mathcal{A}}_{4}^{T}(%
\varepsilon)h_{2}(t,\varepsilon)- \varepsilon \big(P_{2}^{*}(\varepsilon)%
\big)^{T}f_{1}(t)-\varepsilon P_{3}^{*}(\varepsilon)f_{2}(t),
\end{equation}
\begin{equation}  \label{termin-cond}
h_{1}(+\infty,\varepsilon)=0,\ \ \ \ \ h_{2}(+\infty,\varepsilon)=0.
\end{equation}
Let us construct the zero-order asymptotic solutions $\{h_{1 0}(t) , h_{2
0}(t)\}$ of the problem (\ref{eq-h_1})-(\ref{termin-cond}). The equations
for this asymptotic solution are obtained from the system (\ref{eq-h_1})-(%
\ref{eq-h_2}) by setting there formally $\varepsilon=0$ and using Lemma \ref%
{justif-asymp-sol-Riccati}. Thus, we have
\begin{equation}  \label{eq-h_10}
\frac{dh_{1 0}(t)}{dt}=-{\mathcal{A}}_{1}^{T}(0)h_{1 0}(t)-{\mathcal{A}}%
_{3}^{T}(0)h_{2 0}(t)- P_{1 0}^{*}f_{1}(t),
\end{equation}
\begin{equation}  \label{eq-h_20}
0=-{\mathcal{A}}_{2}^{T}(0)h_{1 0}(t)-{\mathcal{A}}_{4}^{T}(0)h_{2 0}(t).
\end{equation}
The terminal condition for $h_{1 0}(t)$ is obtained from the condition for $%
h_{1}(t,\varepsilon)$ (see (\ref{termin-cond})) by formal replacing there $%
h_{1}(+\infty,\varepsilon)$ with $h_{1 0}(+\infty)$, i.e.,
\begin{equation}  \label{term-cond-h_10}
h_{1 0}(+\infty)=0.
\end{equation}

Solving the equation (\ref{eq-h_20}) with respect to $h_{2 0}(t)$, and
taking into account that ${\mathcal{A}}_{4}(0)=-P_{3 0}^{*}=-\big(D_{2}\big)%
^{1/2}$ and ${\mathcal{A}}_{2}(0)=A_{2}$, we obtain
\begin{equation}  \label{h_20}
h_{2 0}(t)=\big(D_{2}\big)^{-1/2}A_{2}^{T}h_{1 0}(t).
\end{equation}
Substitution of (\ref{h_20}) into (\ref{eq-h_10}) and using the expressions $%
{\mathcal{A}}_{1}(0)=A_{1}-S_{1}P_{1 0}^{*}$, ${\mathcal{A}}_{3}(0)=-\big(%
P_{2 0}^{*}\big)^{T}$, as well as the equations (\ref{S_1^o}), (\ref%
{matr-tildA_10}) and the expression for $P_{2 0}^{*}$ (see Lemma \ref%
{justif-asymp-sol-Riccati}), yield the differential equation for $h_{1 0}(t)$
\begin{equation}  \label{dif-eq-h_10}
\frac{dh_{1 0}(t)}{dt}=-{\mathcal{A}}_{0}^{T}h_{1 0}(t)-P_{1 0}^{*}f_{1}(t).
\end{equation}
Due to the inequalities (\ref{f-ineq}) and (\ref{eigenval-ineq-2}), this
equation, subject to the condition (\ref{term-cond-h_10}), has the unique
solution
\begin{equation}  \label{h_10}
h_{1 0}(t)=\int_{0}^{+\infty}\exp\big({\mathcal{A}}_{0}^{T}\zeta\big)P_{1
0}^{*}f_{1}(\zeta+t)d\zeta,\ \ \ \ t \ge 0
\end{equation}
satisfying the inequality
\begin{equation}  \label{ineq-h_10}
\big\|h_{1 0}(t)\big\|\le a\exp(-\gamma t),\ \ \ \ t \ge 0,
\end{equation}
where $a>0$ is some constant.

The equation (\ref{h_20}), along with the condition (\ref{term-cond-h_10})
and the inequality (\ref{ineq-h_10}), yields
\begin{equation}  \label{term-cond-h_20}
h_{2 0}(+\infty)=0,
\end{equation}
\begin{equation}  \label{ineq-h_20}
\big\|h_{2 0}(t)\big\|\le a\exp(-\gamma t),\ \ \ \ t \ge 0,
\end{equation}
where $a>0$ is some constant.

This completes the formal construction of the zero-order asymptotic solution
of the problem (\ref{eq-h_1})-(\ref{termin-cond}).

\begin{lemma}
\label{asympt-h}Let the assumptions (A1)-(A7) be valid. Then, there exists a
positive number $\varepsilon_{1}$, ($\varepsilon_{1}\le\varepsilon_{0}$),
such that for all $\varepsilon\in(0,\varepsilon_{1}]$ the solution $\big\{%
h_{1}(t,\varepsilon) , h_{2}(t,\varepsilon)\big\}$ of the terminal-value
problem (\ref{eq-h_1})-(\ref{termin-cond}) satisfies the inequalities
\begin{equation}  \label{ineq-h_1}
\big\|h_{i}(t,\varepsilon)-h_{i 0}(t)\big\|\le c\varepsilon\exp\left(-\mu
t\right),\ \ \ \ i=1,2,\ \ \ \ t \ge 0,
\end{equation}
where
\begin{equation}  \label{mu}
\mu=\min\{\alpha , \gamma\},
\end{equation}
$c>0$ is some constant independent of $\varepsilon$.
\end{lemma}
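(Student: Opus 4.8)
The plan is to treat the terminal-value problem (\ref{eq-h_1})--(\ref{termin-cond}) as a singularly perturbed linear system and to estimate the remainder $\rho_i(t,\varepsilon) \overset{\triangle}{=} h_i(t,\varepsilon) - h_{i0}(t)$, $i=1,2$. First I would substitute $h_i = h_{i0} + \rho_i$ into (\ref{eq-h_1})--(\ref{eq-h_2}) and subtract the zero-order equations (\ref{eq-h_10})--(\ref{eq-h_20}). Using Lemma \ref{justif-asymp-sol-Riccati} (which gives $\|{\mathcal{A}}_i(\varepsilon) - {\mathcal{A}}_i(0)\| \le a\varepsilon$ and $\|P_i^*(\varepsilon) - P_{i0}^*\| \le a\varepsilon$) together with the exponential bounds (\ref{f-ineq}), (\ref{ineq-h_10}), (\ref{ineq-h_20}), the resulting system for $(\rho_1, \varepsilon\rho_2)$ takes the form
\begin{equation*}
\frac{d\rho_1}{dt} = -{\mathcal{A}}_1^T(\varepsilon)\rho_1 - {\mathcal{A}}_3^T(\varepsilon)\rho_2 + \varepsilon\, g_1(t,\varepsilon),\quad \varepsilon\frac{d\rho_2}{dt} = -{\mathcal{A}}_2^T(\varepsilon)\rho_1 - {\mathcal{A}}_4^T(\varepsilon)\rho_2 + \varepsilon\, g_2(t,\varepsilon),
\end{equation*}
with terminal conditions $\rho_1(+\infty,\varepsilon) = 0$, $\rho_2(+\infty,\varepsilon) = 0$ and with $\|g_i(t,\varepsilon)\| \le c\exp(-\mu t)$, where the extra factor $\varepsilon$ on the right-hand sides and the $\varepsilon \, dh_{20}/dt$ term absorbed into $g_2$ come from the bounds just cited. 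The key structural fact is that ${\mathcal{A}}_4(0) = -(D_2)^{1/2}$ is Hurwitz with spectral gap $\beta$ (see (\ref{eigenval-ineq-1})) and that the reduced matrix ${\mathcal{A}}_0 = {\mathcal{A}}_1(0) - {\mathcal{A}}_2(0)\,{\mathcal{A}}_4^{-1}(0)\,{\mathcal{A}}_3(0)$ is Hurwitz with gap $\alpha$ (see (\ref{eigenval-ineq-2}), (\ref{matr-tildA_10})), which is precisely the standard dichotomy hypothesis for singularly perturbed linear boundary-value problems.

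Next I would invoke the known theory for such systems (e.g. the estimates in \cite{Kokotovic}, Sections 3.4 and 3.6.1, already used to prove Lemma \ref{justif-asymp-sol-Riccati}, or a direct Gronwall argument on the fundamental-matrix representation): for $\varepsilon$ small enough, say $\varepsilon \in (0,\varepsilon_1]$ with $\varepsilon_1 \le \varepsilon_0$, the transition matrix of the homogeneous part decomposes into a slow mode decaying like $\exp(-\alpha(t-s))$ and a fast mode decaying like $\exp(-\beta(t-s)/\varepsilon)$, and the terminal-value problem has a unique bounded solution on $[0,+\infty)$ given by a variation-of-constants formula. Convolving the Green's function against the forcing $\varepsilon g_i$, whose decay rate is $\mu = \min\{\alpha,\gamma\}$, yields $\|\rho_i(t,\varepsilon)\| \le c\varepsilon\exp(-\mu t)$; the fast boundary-layer contribution is $O(\varepsilon)$ uniformly because the terminal condition at $+\infty$ produces no initial layer at $t=0$, and the factor $\varepsilon$ in front of $g_i$ is what upgrades an $O(1)$ bound to the desired $O(\varepsilon)$ bound. (The case $\alpha \ne \gamma$ is assumed so that the convolution $\int \exp(-\alpha(t-s))\exp(-\gamma s)\,ds$ does not generate a $t\exp(-\cdot t)$ term; with $\mu = \min\{\alpha,\gamma\}$ the product bound still holds.)

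The main obstacle I anticipate is establishing the uniform-in-$t$, uniform-in-$\varepsilon$ exponential estimate for the Green's function of the singularly perturbed operator on the half-line, rather than on a compact interval — one must check that the slow–fast dichotomy persists globally and that the boundary condition at $+\infty$ is the "stable" one for the slow subsystem and the "stable" one for the fast subsystem, so that the bounded solution is unique and the Green's function itself decays. Once that dichotomy is in hand, the remaining work — bounding the inhomogeneous convolution and collecting the $a\varepsilon$ contributions from Lemma \ref{justif-asymp-sol-Riccati} and from the discrepancy $\varepsilon\, dh_{20}/dt$ — is routine. I would relegate the detailed dichotomy estimates to an appendix, citing \cite{Kokotovic}, and keep the body of the proof at the level of the substitution, the identification of the forcing order $\varepsilon\exp(-\mu t)$, and the statement of the resulting bound.
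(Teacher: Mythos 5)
Your proposal is correct and follows essentially the same route as the paper's Appendix~A proof: form the error system for $h_i-h_{i0}$, observe that the forcing is $O(\varepsilon)$ times an exponentially decaying function (using Lemma \ref{justif-asymp-sol-Riccati} and the bounds (\ref{f-ineq}), (\ref{ineq-h_10}), (\ref{ineq-h_20})), and then convolve against the block-wise slow/fast exponential estimates of the fundamental matrix guaranteed by the Hurwitz properties of $-(D_2)^{1/2}$ and ${\mathcal{A}}_0$. The only difference is the source cited for the uniform block estimates of the singularly perturbed transition matrix on the half-line (the paper uses \cite{Gl-2003}, Theorem 2.3, packaged as Proposition \ref{block-est-Psi}), and you are in fact slightly more careful than the paper in explicitly accounting for the $\varepsilon\,dh_{20}/dt$ discrepancy in the fast equation.
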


The proof of the lemma is presented in Appendix A.

\subsection{Asymptotic solution of the problem (\protect\ref{eq-s})}

Using the block form of the matrix $S(\varepsilon)$, and the vectors $f(t)$
and $h(t,\varepsilon)$ (see (\ref{S_u-new-form}), (\ref{f-block}), (\ref%
{h-block})), we can rewrite equivalently the problem (\ref{eq-s}) as
follows:
\begin{eqnarray}  \label{eq-s-new-form}
\frac{ds(t,\varepsilon)}{dt}=-2\big(h_{1}^{T}(t,\varepsilon)f_{1}(t)+%
\varepsilon h_{2}^{T}(t,\varepsilon)f_{2}(t)\big)+
h_{1}^{T}(t,\varepsilon)S_{1}h_{1}(t,\varepsilon)  \notag \\
+ 2\varepsilon h_{1}^{T}(t,\varepsilon)S_{2}h_{2}(t,\varepsilon) +
h_{2}^{T}(t,\varepsilon)S_{3}(\varepsilon)h_{2}(t,\varepsilon),\ \ \ \
s(+\infty,\varepsilon)=0.
\end{eqnarray}

Let us construct the zero-order asymptotic solutions $s_{0}(t)$ of the
problem (\ref{eq-s-new-form}). The equation for this asymptotic solution is
obtained from the differential equation in (\ref{eq-s-new-form}) by setting
there formally $\varepsilon =0$ and using Lemma \ref{asympt-h}. Thus, we
have
\begin{equation}
\frac{ds_{0}(t)}{dt}%
=-2h_{10}^{T}(t)f_{1}(t)+h_{10}^{T}(t)S_{1}h_{10}(t)+h_{20}^{T}(t)h_{20}(t).
\label{eq-s_0}
\end{equation}%
Substituting the expression for $h_{20}(t)$ (see (\ref{h_20})) into the
right-hand side of (\ref{eq-s_0}), and using (\ref{S_1^o}), we obtain
\begin{equation}
\frac{ds_{0}(t)}{dt}=-2h_{10}^{T}(t)f_{1}(t)+h_{10}^{T}(t)S_{0}h_{10}(t).
\label{eq-s_0-new-form}
\end{equation}%
The terminal condition for $s_{0}(t)$ is obtained from the condition for $%
s(t,\varepsilon )$ in (\ref{eq-s-new-form}) by formal replacing there $%
s(+\infty ,\varepsilon )$ with $s_{0}(+\infty )$, i.e.,
\begin{equation}
s_{0}(+\infty )=0.  \label{term-cond-s_0}
\end{equation}%
The solution of the problem (\ref{eq-s_0-new-form})-(\ref{term-cond-s_0})
has the form
\begin{equation}
s_{0}(t)=\int_{t}^{+\infty }\Big(2h_{10}^{T}(\sigma )f_{1}(\sigma
)-h_{10}^{T}(\sigma )S_{0}h_{10}(\sigma )\Big)d\sigma ,\ \ \ \ t\geq 0.
\label{s_0}
\end{equation}%
Due to the inequalities (\ref{f-ineq}) and (\ref{ineq-h_10}), the integral
in (\ref{s_0}) converges. This completes the formal construction of the
zero-order asymptotic solution of the problem (\ref{eq-s-new-form}).
Similarly to Lemma \ref{asympt-h}, we obtain the following lemma:

\begin{lemma}
\label{asympt-s}Let the assumptions (A1)-(A7) be valid. Then, there exists a
positive number $\varepsilon_{2}$, ($\varepsilon_{2}\le\varepsilon_{1}$),
such that for all $\varepsilon\in(0,\varepsilon_{2}]$ the solution $%
s(t,\varepsilon)$ of the terminal-value problem (\ref{eq-s-new-form})
satisfies the inequality
\begin{equation}  \label{ineq-s}
\big\|s(t,\varepsilon)-s_{0}(t)\big\|\le c\varepsilon,\ \ \ \ \ t \ge 0,
\end{equation}
where $c>0$ is some constant independent of $\varepsilon$.
\end{lemma}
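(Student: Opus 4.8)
The plan is to mimic the structure of the proof of Lemma \ref{asympt-h} (which is deferred to Appendix A) but in the much simpler setting where $s$ is a scalar whose dynamics are given by a pure quadrature. First I would introduce the remainder $\Delta s(t,\varepsilon)\overset{\triangle}{=}s(t,\varepsilon)-s_{0}(t)$ and derive the terminal-value problem it satisfies by subtracting the equation \eqref{eq-s_0-new-form} from \eqref{eq-s-new-form}. Since both $s(t,\varepsilon)$ and $s_{0}(t)$ vanish at $+\infty$, the function $\Delta s$ also vanishes at $+\infty$, so it can be written as an integral over $[t,+\infty)$ of the difference of the two right-hand sides:
\begin{equation*}
\Delta s(t,\varepsilon)=\int_{t}^{+\infty}\big[R(\sigma,\varepsilon)-R_{0}(\sigma)\big]\,d\sigma,
\end{equation*}
where $R(\sigma,\varepsilon)$ denotes the right-hand side of \eqref{eq-s-new-form} and $R_{0}(\sigma)$ the right-hand side of \eqref{eq-s_0-new-form}.

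Next I would bound the integrand $R(\sigma,\varepsilon)-R_{0}(\sigma)$ pointwise. This difference is a finite sum of terms, each of which is either (i) a bilinear expression in $h_{i}(\sigma,\varepsilon)-h_{i0}(\sigma)$ and one of $f_{1}(\sigma)$, $h_{j0}(\sigma)$ (hence, by Lemma \ref{asympt-h}, of order $\varepsilon\exp(-\mu\sigma)$ times an exponentially decaying factor), (ii) an explicit factor of $\varepsilon$ multiplying a product of exponentially bounded quantities (using \eqref{f-ineq}, \eqref{ineq-h_10}, \eqref{ineq-h_20}, the inequality \eqref{ineq-h_1}, and the fact that $S_{3}(\varepsilon)=\varepsilon^{2}H_{2}+I_{r-q}$ is uniformly bounded on $[0,\varepsilon_{1}]$), or (iii) the quadratic term $h_{2}^{T}S_{3}(\varepsilon)h_{2}-h_{20}^{T}h_{20}$, which splits into $h_{2}^{T}(S_{3}(\varepsilon)-I_{r-q})h_{2}=\varepsilon^{2}h_{2}^{T}H_{2}h_{2}$ plus $h_{2}^{T}h_{2}-h_{20}^{T}h_{20}=(h_{2}-h_{20})^{T}(h_{2}+h_{20})$, both again $O(\varepsilon)$ times a decaying exponential. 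Collecting these estimates, I get $|R(\sigma,\varepsilon)-R_{0}(\sigma)|\le c_{1}\varepsilon\exp(-\nu\sigma)$ for all $\sigma\ge 0$ and $\varepsilon\in(0,\varepsilon_{1}]$, with $\nu>0$ (some combination of $\mu$ and $\gamma$) and $c_{1}$ independent of $\varepsilon$.

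Finally, integrating this bound over $[t,+\infty)$ gives $|\Delta s(t,\varepsilon)|\le (c_{1}/\nu)\varepsilon\exp(-\nu t)\le c\varepsilon$ for all $t\ge 0$, with $c>0$ independent of $\varepsilon$, which is the claimed inequality \eqref{ineq-s}; taking $\varepsilon_{2}=\varepsilon_{1}$ suffices. I would also note that the convergence of all the integrals involved (in particular the one defining $s_{0}$ and the one defining $\Delta s$) is guaranteed by the exponential decay just established, so the objects are well defined. The main obstacle is purely bookkeeping: one must carefully verify that every one of the mixed terms in $R-R_{0}$ genuinely carries either an explicit factor of $\varepsilon$ or a factor of the form $h_{i}-h_{i0}$, so that no $O(1)$ term survives; this uses Lemma \ref{asympt-h} crucially, together with the uniform-in-$\varepsilon$ exponential bounds on $h_{i}(\cdot,\varepsilon)$ that follow from \eqref{ineq-h_10}, \eqref{ineq-h_20} and \eqref{ineq-h_1} via the triangle inequality. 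There is no genuine analytic difficulty here beyond what was already handled in Lemma \ref{asympt-h}, which is why the statement says ``similarly to Lemma \ref{asympt-h}.''
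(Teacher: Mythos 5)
Your proposal is correct and is exactly what the paper intends: the paper gives no written proof of this lemma, only the remark that it is obtained ``similarly to Lemma \ref{asympt-h},'' and your argument --- representing $s-s_{0}$ as the integral over $[t,+\infty)$ of the difference of the right-hand sides (up to sign, since $ds/dt=R$ with $s(+\infty)=0$ gives $s(t)=-\int_{t}^{+\infty}R\,d\sigma$) and bounding each term by $c_{1}\varepsilon\exp(-\nu\sigma)$ via Lemma \ref{asympt-h}, the bounds \eqref{f-ineq}, \eqref{ineq-h_10}, \eqref{ineq-h_20}, and $S_{3}(\varepsilon)=\varepsilon^{2}H_{2}+I_{r-q}$ --- is the natural and correct way to carry that out, simpler than Appendix~A because the $s$-equation is a pure quadrature and needs no fundamental-matrix estimates.
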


\subsection{Asymptotic expansion of the optimal value of the cost functional}

Let us partition the vector $z_{0}$ into blocks as:
\begin{equation}  \label{z_0-blocks}
z_{0}=\left(%
\begin{array}{c}
x_{0} \\
y_{0}%
\end{array}%
\right),\ \ \ \ x_{0}\in E^{n-r+q},\ \ \ \ y_{0}\in E^{r-q}.
\end{equation}
Let us introduce the value
\begin{equation}  \label{bar-J*}
\bar{J}^{*}\overset{\triangle}{=}x_{0}^{T}P_{1 0}^{*}x_{0}+2h_{1
0}(0)x_{0}+s_{0}(0).
\end{equation}

\begin{lemma}
\label{J_eps*-asympt}Let the assumptions (A1)-(A7) be valid.Then, the
following inequality is satisfied:
\begin{equation}  \label{ineq-J_eps*}
\big|J_{\varepsilon}^{*}-\bar{J}^{*}\big|\le c\varepsilon,\ \ \ \ \
\varepsilon\in(0,\varepsilon_{2}],
\end{equation}
where $J_{\varepsilon}^{*}$ is the optimal value of the cost functional in
the PCCP; $c>0$ is some constant independent of $\varepsilon$.
\end{lemma}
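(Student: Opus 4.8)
The plan is to compare the exact optimal value $J_\varepsilon^*$, given by (\ref{opt-tilde-J}) as
$J_\varepsilon^* = z_0^T P^*(\varepsilon) z_0 + 2h^T(0)z_0 + s(0,\varepsilon)$,
with the candidate limiting value $\bar J^*$ defined in (\ref{bar-J*}), term by term, using the asymptotic estimates already established in Lemmas \ref{justif-asymp-sol-Riccati}, \ref{asympt-h} and \ref{asympt-s}. First I would expand $z_0^T P^*(\varepsilon) z_0$ in block form: substituting (\ref{P^*(eps)-block}) and (\ref{z_0-blocks}) gives
$z_0^T P^*(\varepsilon) z_0 = x_0^T P_1^*(\varepsilon) x_0 + 2\varepsilon x_0^T P_2^*(\varepsilon) y_0 + \varepsilon y_0^T P_3^*(\varepsilon) y_0$.
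The last two terms are $O(\varepsilon)$ because $P_2^*(\varepsilon)$ and $P_3^*(\varepsilon)$ are bounded on $[0,\varepsilon_0]$ by (\ref{ineq-tilde(P-P_0)}), and the first term differs from $x_0^T P_{10}^* x_0$ by at most $\|x_0\|^2 \|P_1^*(\varepsilon)-P_{10}^*\| \le a\varepsilon\|x_0\|^2$, again by (\ref{ineq-tilde(P-P_0)}). Hence $\big|z_0^T P^*(\varepsilon) z_0 - x_0^T P_{10}^* x_0\big| \le c\varepsilon$ for a suitable constant $c$ depending only on $z_0$.

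Next I would treat the cross term $2h^T(0)z_0$. Using the block form (\ref{h-block}) of $h(t,\varepsilon)=\big(h_1(t,\varepsilon),\varepsilon h_2(t,\varepsilon)\big)^T$ and (\ref{z_0-blocks}), one has
$2h^T(0)z_0 = 2h_1^T(0,\varepsilon)x_0 + 2\varepsilon h_2^T(0,\varepsilon)y_0$.
By Lemma \ref{asympt-h}, $\|h_1(0,\varepsilon)-h_{10}(0)\| \le c\varepsilon$ and $\|h_2(0,\varepsilon)-h_{20}(0)\| \le c\varepsilon$, so in particular $h_2(0,\varepsilon)$ is bounded (by $\|h_{20}(0)\|+c\varepsilon_1$); therefore $2\varepsilon h_2^T(0,\varepsilon)y_0 = O(\varepsilon)$ and $2h_1^T(0,\varepsilon)x_0$ differs from $2h_{10}^T(0)x_0$ by at most $2c\varepsilon\|x_0\|$. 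Hence $\big|2h^T(0)z_0 - 2h_{10}^T(0)x_0\big| \le c\varepsilon$. Finally, for the scalar term, Lemma \ref{asympt-s} gives directly $\big|s(0,\varepsilon) - s_0(0)\big| \le c\varepsilon$.

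Adding the three estimates and using the triangle inequality yields
$\big|J_\varepsilon^* - \big(x_0^T P_{10}^* x_0 + 2h_{10}^T(0)x_0 + s_0(0)\big)\big| = \big|J_\varepsilon^* - \bar J^*\big| \le c\varepsilon$
for all $\varepsilon\in(0,\varepsilon_2]$, with $c>0$ independent of $\varepsilon$, which is exactly (\ref{ineq-J_eps*}). I do not expect a genuine obstacle here, since all the hard analytic work — the justification of the asymptotic expansions of $P^*(\varepsilon)$, of $h(\cdot,\varepsilon)$, and of $s(\cdot,\varepsilon)$ — is already packaged in the preceding lemmas; the only points requiring a little care are (i) noting that boundedness of the $\varepsilon$-blocks $P_2^*,P_3^*,h_2$ is what makes the explicitly-$\varepsilon$-weighted terms negligible (this uses the block structure (\ref{P^*(eps)-block}) and (\ref{h-block}) in an essential way), and (ii) absorbing all the problem data $\|x_0\|$, $\|y_0\|$, $\|P_{i0}^*\|$, $\|h_{i0}(0)\|$ into a single constant $c$. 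This is the mildly delicate bookkeeping step, but it is routine.
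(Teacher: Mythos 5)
Your proposal is correct and follows exactly the route the paper takes: the paper's proof simply states that the lemma is a direct consequence of the formula (\ref{opt-tilde-J}) for $J_{\varepsilon}^{*}$ together with Lemmas \ref{justif-asymp-sol-Riccati}, \ref{asympt-h}, and \ref{asympt-s}, and your block-wise expansion of $z_{0}^{T}P^{\ast}(\varepsilon)z_{0}$, $2h^{T}(0)z_{0}$, and $s(0)$ is precisely the bookkeeping the authors leave implicit.
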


\begin{proof}
The statement of the lemma is a direct consequence of the equation (\ref%
{opt-tilde-J}) and Lemmas \ref{justif-asymp-sol-Riccati}, \ref{asympt-h}, %
\ref{asympt-s}.
\end{proof}

\subsection{Reduced optimal control problem}

Consider the following controlled differential equation:%
\begin{equation}
\frac{d\bar{x}(t)}{dt}=A_{1}\overline{x}(t)+\bar{B}\bar{u}(t)+f_{1}(t),\
t\geq 0,\ \overline{x}(0)=x_{0},  \label{out-dyn1}
\end{equation}%
where $\overline{x}(t)\in E^{n-r+q}$ is the state vector, $\bar{u}(t)\in
E^{r}$ is the control, the matrix $\bar{B}$ is given in (\ref{B^o}).

The cost functional, to be minimized by $\bar{u}(t)$, has the form
\begin{equation}
\bar{J}(\bar{u})= \int_{0}^{+\infty} \big(\bar{x}^{T}(t)D_{1}\bar{x}(t)+\bar{%
u}^{T}(t)\Theta \bar{u}(t)\big)dt,  \label{bar-J}
\end{equation}
where the matrix $\Theta$ is given by (\ref{Theta}).

We call the problem (\ref{out-dyn1})-(\ref{bar-J}) the Reduced Optimal
Control Problem (ROCP).

Consider the set $\bar{{\mathcal{P}}}$ of all functions $\bar{p}(\bar{w}%
,t):E^{n-r+q}\times \left[ 0,+\infty \right) \rightarrow E^{r}$, which are
measurable w.r.t. $t\geq 0$ for any fixed $\bar{w}\in E^{n-r+q}$ and satisfy
the local Lipschitz condition w.r.t. $\bar{w}\in E^{n-r+q}$ uniformly in $%
t\geq 0$.

\begin{definition}
\label{admiss-def-auxil-2}Let $\bar{u}(\bar{x},t)$, $(\bar{x},t)\in
E^{n-r+q}\times \left[ 0,+\infty \right) $, be a function belonging to the
set $\bar{{\mathcal{P}}}$. The function $\bar{u}(\bar{x},t)$ is called an
admissible state-feedback control in the ROCP if the following conditions
hold: (a) the initial-value problem (\ref{out-dyn1}) for $\bar{u}(t)=\bar{u}(%
\bar{x},t)$ has the unique locally absolutely continuous solution $\bar{x}%
(t) $ on the entire interval $\left[ 0,+\infty \right) $; (b) $\bar{x}(t)\in
L^{2}\left[ 0,+\infty ;E^{n-r+q}\right] $; (c) $\bar{u}\big(\bar{x}(t),t\big)%
\in L^{2}\left[ 0,+\infty ;E^{r}\right] $. The set of all such $\bar{u}(\bar{%
x},t)$ is denoted by $\bar{M}_{u}$.
\end{definition}

\begin{lemma}
\label{reduced-pr}Let the assumptions (A1)-(A7) be valid. Then, the optimal
control of the ROCP exists in the set $\bar{M}_{u}$. This control is unique
and it has the form
\begin{equation}  \label{bar-u*}
\bar{u}^{*}(\bar{x},t)=-\Theta^{- 1}\bar{B}^{T}P_{1 0}^{*}\bar{x}-\Theta^{-
1}\bar{B}^{T}h_{1 0}(t).
\end{equation}
The optimal value of the cost functional in the ROCP is $\bar{J}^{*}$,
having the form (\ref{bar-J*}).
\end{lemma}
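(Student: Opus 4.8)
The plan is to recognize the ROCP (\ref{out-dyn1})--(\ref{bar-J}) as a \emph{regular} (non-singular) infinite-horizon linear-quadratic optimal control problem with an exponentially decaying known disturbance, and to solve it by the standard completion-of-squares argument — the same argument that underlies the treatment of the PCCP in Section 3.2. First I would record that the control weight $\Theta$ in (\ref{bar-J}) is positive definite: by (\ref{Theta}) it is block-diagonal with blocks $\widetilde{G}$ and $D_{2}$, where $\widetilde{G}>0$ by (A3) and $D_{2}>0$ by Lemma \ref{init-probl-transf}. The algebraic Riccati equation associated with the ROCP is $P A_{1}+A_{1}^{T}P-P\bar{B}\Theta^{-1}\bar{B}^{T}P+D_{1}=0$, which, by the representation $S_{0}=\bar{B}\Theta^{-1}\bar{B}^{T}$ in (\ref{S_1^o-new-repr}), is exactly equation (\ref{eq-P_10^o-new-form}). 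By (A6), (A7), the factorization (\ref{F_1^TF_1}) and the results of \cite{Anders-Moore1971} (already invoked in the excerpt), this equation has the unique symmetric solution $P_{10}^{*}\ge 0$, and the closed-loop matrix $\mathcal{A}_{0}=A_{1}-S_{0}P_{10}^{*}$ of (\ref{matr-tildA_10}) is Hurwitz, satisfying (\ref{eigenval-ineq-2}). The disturbance compensator $h_{10}(t)$ of (\ref{h_10}) is the unique bounded solution of (\ref{dif-eq-h_10}), (\ref{term-cond-h_10}), and $s_{0}(t)$ of (\ref{s_0}) is the unique solution of (\ref{eq-s_0-new-form})--(\ref{term-cond-s_0}); in particular $s_{0}(0)$ is finite by the convergence of the integral in (\ref{s_0}), so $\bar{J}^{*}$ in (\ref{bar-J*}) is a finite number.

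Second, I would verify that $\bar{u}^{*}(\bar{x},t)$ of (\ref{bar-u*}) is admissible, i.e. $\bar{u}^{*}\in\bar{M}_{u}$. It is affine in $\bar{x}$, hence locally Lipschitz in $\bar{x}$ uniformly in $t$, and continuous in $t$ (since $h_{10}$ is), so $\bar{u}^{*}\in\bar{{\mathcal{P}}}$. Substituting it into (\ref{out-dyn1}) yields the linear closed-loop equation $\dot{\bar{x}}(t)=\mathcal{A}_{0}\bar{x}(t)+\big(f_{1}(t)-S_{0}h_{10}(t)\big)$, $\bar{x}(0)=x_{0}$, which has a unique locally absolutely continuous solution on $[0,+\infty)$; since $\mathcal{A}_{0}$ is Hurwitz and the forcing decays exponentially by (\ref{f-ineq}) and (\ref{ineq-h_10}), $\bar{x}(t)$ decays exponentially, so $\bar{x}\in L^{2}[0,+\infty;E^{n-r+q}]$ and consequently $\bar{u}^{*}\big(\bar{x}(t),t\big)\in L^{2}[0,+\infty;E^{r}]$. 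Thus $\bar{M}_{u}\neq\emptyset$.

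Third comes the core computation. For an arbitrary $\bar{u}\in\bar{M}_{u}$ with trajectory $\bar{x}(t)$, set $V(\bar{x},t)=\bar{x}^{T}P_{10}^{*}\bar{x}+2h_{10}^{T}(t)\bar{x}+s_{0}(t)$ and differentiate along (\ref{out-dyn1}). Using the Riccati equation (\ref{eq-P_10^o-new-form}), the $h_{10}$-equation (\ref{dif-eq-h_10}), the $s_{0}$-equation (\ref{eq-s_0-new-form}), and $S_{0}=\bar{B}\Theta^{-1}\bar{B}^{T}$, every non-quadratic-in-$(\bar{u}-\bar{u}^{*})$ term cancels and one gets the pointwise identity
\begin{equation*}
\bar{x}^{T}(t)D_{1}\bar{x}(t)+\bar{u}^{T}(t)\Theta\bar{u}(t)=-\frac{dV(\bar{x}(t),t)}{dt}+\big(\bar{u}(t)-\bar{u}^{*}(\bar{x}(t),t)\big)^{T}\Theta\big(\bar{u}(t)-\bar{u}^{*}(\bar{x}(t),t)\big).
\end{equation*}
Integrating over $[0,T]$ and letting $T\to+\infty$ (the integrand on the left is nonnegative, so monotone convergence applies) gives $\bar{J}(\bar{u})=V(x_{0},0)-\lim_{t\to+\infty}V(\bar{x}(t),t)+\int_{0}^{+\infty}\big(\bar{u}-\bar{u}^{*}\big)^{T}\Theta\big(\bar{u}-\bar{u}^{*}\big)dt$, where $V(x_{0},0)=x_{0}^{T}P_{10}^{*}x_{0}+2h_{10}(0)x_{0}+s_{0}(0)=\bar{J}^{*}$ by (\ref{bar-J*}). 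Since $\Theta>0$, the remaining integral is nonnegative and vanishes iff $\bar{u}\big(\bar{x}(t),t\big)=\bar{u}^{*}\big(\bar{x}(t),t\big)$ for almost all $t$; hence $\bar{J}(\bar{u})\ge\bar{J}^{*}$ for every $\bar{u}\in\bar{M}_{u}$, with equality only for $\bar{u}=\bar{u}^{*}$. Applying the identity to $\bar{u}^{*}$ itself (admissible by the previous step) gives $\bar{J}(\bar{u}^{*})=\bar{J}^{*}$. This establishes existence, uniqueness, the form (\ref{bar-u*}), and the optimal value $\bar{J}^{*}$.

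The main obstacle is justifying $\lim_{t\to+\infty}V(\bar{x}(t),t)=0$ for an \emph{arbitrary} admissible control, where a priori one only knows $\bar{x}\in L^{2}$. I would argue that $\dot{\bar{x}}=A_{1}\bar{x}+\bar{B}\bar{u}+f_{1}\in L^{2}$ as well (each summand is: $\bar{x},\bar{u}$ by admissibility, $f_{1}$ by (\ref{f-ineq})), so $\tfrac{d}{dt}\|\bar{x}(t)\|^{2}=2\bar{x}^{T}(t)\dot{\bar{x}}(t)\in L^{1}$ by Cauchy--Schwarz; therefore $\|\bar{x}(t)\|^{2}$ has a finite limit as $t\to+\infty$, which must be $0$ since $\bar{x}\in L^{2}$. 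Together with $h_{10}(t)\to 0$ (by (\ref{ineq-h_10})) and $s_{0}(t)\to 0$ (from (\ref{s_0})), this gives $V(\bar{x}(t),t)\to 0$. Alternatively — and more briefly — one may simply note that the ROCP has exactly the structure of the PCCP of Section 3.2 with $A\mapsto A_{1}$, $B\mapsto\bar{B}$, $G+\mathcal{E}\mapsto\Theta>0$, $D\mapsto D_{1}$, $S(\varepsilon)\mapsto S_{0}$, $P^{*}(\varepsilon)\mapsto P_{10}^{*}$, $h\mapsto h_{10}$, $s\mapsto s_{0}$, and that $\mathcal{A}_{0}$ is Hurwitz and $f_{1}$ exponentially decaying, so the results of \cite{Salukvadze-62} apply verbatim and deliver (\ref{bar-u*}) together with the optimal value $\bar{J}^{*}$ at once.
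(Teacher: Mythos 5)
Your proposal is correct, but it is considerably more self-contained than the paper's own proof, which consists of a single sentence: the statements are said to ``directly follow from the results of \cite{Salukvadze-62}'', based on Lemmas \ref{justif-asymp-sol-Riccati}, \ref{asympt-h}, \ref{asympt-s} --- i.e.\ exactly the route you sketch in your closing remark (the ROCP is a regular infinite-horizon LQ problem with exponentially decaying known disturbance, structurally identical to the PCCP of Section 3.2 with $\Theta>0$ in place of $G+\mathcal{E}$, so the classical result applies verbatim). What your primary argument buys is a complete, verifiable derivation of that classical result in the present setting: the identification of (\ref{eq-P_10^o-new-form}) as the ARE of the ROCP via $S_{0}=\bar{B}\Theta^{-1}\bar{B}^{T}$, the admissibility check for $\bar{u}^{*}$, the completion-of-squares identity
\begin{equation*}
\bar{x}^{T}D_{1}\bar{x}+\bar{u}^{T}\Theta\bar{u}
=-\frac{d}{dt}\Big(\bar{x}^{T}P_{10}^{*}\bar{x}+2h_{10}^{T}(t)\bar{x}+s_{0}(t)\Big)
+\big(\bar{u}-\bar{u}^{*}\big)^{T}\Theta\big(\bar{u}-\bar{u}^{*}\big),
\end{equation*}
and, most valuably, the Barbalat-type argument ($\bar{x}\in L^{2}$ and $\dot{\bar{x}}\in L^{2}$ imply $\|\bar{x}(t)\|\to 0$) that justifies $V(\bar{x}(t),t)\to 0$ for an \emph{arbitrary} admissible control --- the one step that is genuinely nontrivial for the infimum inequality over all of $\bar{M}_{u}$ and that the paper leaves entirely to the citation. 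Your computation checks out (the cross terms cancel exactly by the ARE, the $h_{10}$-equation (\ref{dif-eq-h_10}) and the $s_{0}$-equation (\ref{eq-s_0-new-form})), and the equality case $\Theta>0$ gives uniqueness as claimed; the only stylistic difference is that the paper never displays any of this.
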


\begin{proof}
Based on Lemmas \ref{justif-asymp-sol-Riccati}, \ref{asympt-h}, \ref%
{asympt-s}, the statements of the lemma directly follow from the results of
\cite{Salukvadze-62}.
\end{proof}

\begin{remark}
Based on the block form of the matrices $\bar{B}$ and $\Theta$ (see (\ref%
{B^o}) and (\ref{Theta})), the optimal state-feedback control $\bar{u}^{*}(%
\bar{x},t)$ of the ROCP can be represented as follows:
\begin{equation}  \label{u^o*-block-form}
\bar{u}^{\ast}(\bar{x},t)=\left(
\begin{array}{c}
\bar{u}_{1}^{\ast}(\bar{x},t) \\
\bar{u}_{2}^{\ast}(\bar{x},t)%
\end{array}%
\right),
\end{equation}
where
\begin{equation}  \label{u_1^o*}
\bar{u}_{1}^{\ast}(\bar{x},t) = -\widetilde{G}^{-1}\widetilde{B}%
^{T}P_{10}^{*}\bar{x}-\widetilde{G}^{-1}\widetilde{B}^{T}h_{1 0}(t),
\end{equation}
\begin{equation}  \label{u_2^o*}
\bar{u}_{2}^{\ast}(\bar{x},t)= -D_{2}^{-1}A_{2}^{T}P_{10}^{*}\bar{x}%
-D_{2}^{-1}A_{2}^{T}h_{1 0}(t).
\end{equation}
\end{remark}

\section{Main Results}

For any $\varepsilon \in (0,\varepsilon _{2}]$, consider two state-feedback
controls in the OOCP. The first control $u_{\varepsilon, 1}(z,t)$ is
obtained from the optimal control in the PCCP (see (\ref{opt-contr-auxil}))
by replacing the matrix $P^{*}(\varepsilon)$ and the vector $h(t)$ with the
following matrix and vector, respectively:
\begin{equation}
P^{*}_{0}(\varepsilon)=\left(%
\begin{array}{l}
P_{1 0}^{*}\ \ \ \ \ \ \ \ \varepsilon P_{2 0}^{*} \\
\varepsilon \big(P_{2 0}^{*}\big)^{T}\ \ \varepsilon P_{3 0}^{*}%
\end{array}%
\right),\ \ \ \ h_{0}(t,\varepsilon)=\left(%
\begin{array}{l}
h_{1 0}(t) \\
\varepsilon h_{2 0}(t)%
\end{array}%
\right).  \label{P_0}
\end{equation}
Thus, we have
\begin{equation}
u_{\varepsilon, 1}\left(z,t\right) =-\left(G_{u}+\mathcal{E}%
\right)^{-1}B^{T}P^{*}_{0}(\varepsilon) - \left(G_{u}+\mathcal{E}%
\right)^{-1}B^{T}h_{0}(t,\varepsilon).  \label{subopt-min-str-1}
\end{equation}

In (\ref{P_0}) and (\ref{subopt-min-str-1}), $\{P_{1 0}^{*},P_{2 0}^{*},P_{3
0}^{*}\}$ and $\{h_{1 0}(t),h_{2 0}(t)\}$ are the asymptotic solutions of
the system (\ref{eq-P_1})-(\ref{eq-P_3}) and the terminal-value problem (\ref%
{eq-h_1})-(\ref{termin-cond}), respectively.

\begin{lemma}
\label{u_eps,1^o-represent}Subject to the assumptions (A1)-(A7), the
state-feedback control $u_{\varepsilon, 1}(z,t)$ can be represented in the
block form as:
\begin{equation}
u_{\varepsilon, 1}(z,t) =-\left(
\begin{array}{c}
K_{1}(\varepsilon)x+\varepsilon K_{2}(\varepsilon)y +H_{3}h_{1
0}(t)+\varepsilon H_{1}h_{2 0}(t) \\
\\
(1/\varepsilon)\Big[ \left( P_{20}^{*}\right)^{T}x+P_{30}^{*}y+h_{2 0}(t)%
\Big]%
\end{array}%
\right)  \label{derivation}
\end{equation}
where
\begin{equation}  \label{K}
K_{1}(\varepsilon)\overset{\triangle}{=}H_{3}P_{10}^{*}+\mathcal{\varepsilon
}H_{1}\left(P_{20}^{*}\right)^{T},\ \ \ K_{2}(\varepsilon)\overset{\triangle}%
{=}H_{3} P_{20}^{*}+H_{1}P_{30}^{*};
\end{equation}
$H_{1}$ and $H_{3}$ are defined in (\ref{eqH1H2});
\begin{equation}  \label{z-block}
z=\mathrm{col}\big(x,y\big),\ \ \ \ x\in E^{n-r+q},\ \ \ y\in E^{r-q}.
\end{equation}
\end{lemma}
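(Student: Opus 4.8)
The plan is to verify Lemma~\ref{u_eps,1^o-represent} by direct substitution of the block forms of $B$, $G+\mathcal{E}$, $P^{*}_{0}(\varepsilon)$, and $h_{0}(t,\varepsilon)$ into the defining expression \eqref{subopt-min-str-1}, followed by a routine rearrangement that separates the $x$- and $y$-components. First I would record the block structure of the relevant matrices: by \eqref{new-matr-B}--\eqref{newmatrB2}, $B=\mathrm{col}(B_1,B_2)$ with $B_1$ and $B_2$ as given there; by \eqref{mathcal-G_u} and \eqref{mathcal-E}, $G+\mathcal{E}=\mathrm{diag}(\widetilde{G},\varepsilon^2 I_{r-q})$, so $(G+\mathcal{E})^{-1}=\mathrm{diag}(\widetilde{G}^{-1},(1/\varepsilon^2)I_{r-q})$. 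Hence $(G+\mathcal{E})^{-1}B^{T}$ has the two block-columns $\widetilde{G}^{-1}B_1^{T}$ and $(1/\varepsilon^2)B_2^{T}$ acting on the first $(n-r+q)$ and last $(r-q)$ rows of whatever matrix or vector it multiplies on the right.

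Next I would compute $(G+\mathcal{E})^{-1}B^{T}P^{*}_{0}(\varepsilon)$. Using the block form of $P^{*}_{0}(\varepsilon)$ in \eqref{P_0} and multiplying out, the top block row becomes $\widetilde{G}^{-1}B_1^{T}$ applied to the first block-row of $P^{*}_0$ plus contributions from $B_2$; the key algebraic identities needed are $H_3 = \widetilde{G}^{-1}B_1^{T}\cdot(\text{appropriate selector})$ and $H_1 = H_3\mathcal{H}^{T}$ from \eqref{eqH1H2}, together with $B_2=\mathcal{H}B_1+\widetilde{I}_2$. These should collapse the top block into $K_1(\varepsilon)x+\varepsilon K_2(\varepsilon)y$ after grouping by $x$ and $y$, with $K_1,K_2$ exactly as in \eqref{K}. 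Simultaneously, the $(1/\varepsilon^2)B_2^{T}$ factor acting on $\varepsilon$-scaled blocks of $P^{*}_0$ produces the $(1/\varepsilon)$ prefactor and the bracket $(P^{*}_{20})^{T}x+P^{*}_{30}y$ in the bottom block. The same bookkeeping applied to $(G+\mathcal{E})^{-1}B^{T}h_0(t,\varepsilon)$, using \eqref{P_0} for $h_0$ and the identity $\varepsilon\cdot(1/\varepsilon^2)\cdot\varepsilon = 1/\varepsilon\cdot\varepsilon^2$-type cancellations, yields $H_3 h_{10}(t)+\varepsilon H_1 h_{20}(t)$ in the top block and $(1/\varepsilon)h_{20}(t)$ in the bottom block. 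Adding the two contributions and inserting the overall minus sign gives exactly \eqref{derivation}.

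The routine but slightly delicate point is tracking the powers of $\varepsilon$ correctly through the three different scalings — the off-diagonal $\varepsilon P^{*}_{20}$ blocks, the $\varepsilon P^{*}_{30}$ block, the $\varepsilon h_{20}$ block, and the $1/\varepsilon^2$ coming from $(G+\mathcal{E})^{-1}$ in the singular coordinates — and checking that they combine to leave the stated $O(1)$, $O(\varepsilon)$, and $O(1/\varepsilon)$ terms in the right places. I would organize this as two columns of computation (one for the $P^{*}_0$-part, one for the $h_0$-part), each split into its top and bottom block, and then verify term-by-term against \eqref{derivation} and \eqref{K}.

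The main obstacle, such as it is, will be confirming that the cross-terms involving $\mathcal{H}$ assemble precisely into the matrices $K_1$ and $K_2$ of \eqref{K} rather than some superficially different but equivalent form; this rests entirely on the identities $H_1=H_3\mathcal{H}^{T}$, $B_2=\mathcal{H}B_1+\widetilde{I}_2$, and the fact that $\widetilde{G}^{-1}B_1^{T}$ restricted appropriately equals $H_3$, all of which are already available from \eqref{state-trans-L}, \eqref{newmatrB1}--\eqref{newmatrB2}, and \eqref{eqH1H2}. Since everything is linear and the building blocks are explicit, no genuine difficulty is expected; the content of the lemma is really just the observation that $u_{\varepsilon,1}$, though defined through the full $n\times n$ Riccati-type matrix, decomposes into a "slow" feedback on $x$ plus an $\varepsilon$-scaled feedback on $y$ plus a singular $(1/\varepsilon)$ boundary-layer term, which is precisely what the subsequent analysis of the minimizing sequence will exploit.
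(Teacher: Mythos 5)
Your proposal is correct and follows essentially the same route as the paper: the paper's own proof likewise reduces the lemma to the block identity (\ref{block-form}) for $(G+\mathcal{E})^{-1}B^{T}$ (citing \cite{Glizer-Kelis2015} for that computation) and then substitutes the block forms (\ref{P_0}) and (\ref{z-block}) into (\ref{subopt-min-str-1}), exactly as you propose. One small caution: the diagonal factor $(G+\mathcal{E})^{-1}=\mathrm{diag}\big(\widetilde{G}^{-1},(1/\varepsilon^{2})I_{r-q}\big)$ acts on the \emph{row} blocks of $B^{T}$ (the $q$ regular versus $r-q$ singular control coordinates), not on the column blocks $B_{1}^{T}$ and $B_{2}^{T}$, so your phrase about ``block-columns $\widetilde{G}^{-1}B_{1}^{T}$ and $(1/\varepsilon^{2})B_{2}^{T}$'' is dimensionally off as stated, although the identities you invoke ($H_{1}=H_{3}\mathcal{H}^{T}$, $B_{2}=\mathcal{H}B_{1}+\widetilde{I}_{2}$) do yield the correct block form and hence (\ref{derivation}).
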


\begin{proof}
Using the results of \cite{Glizer-Kelis2015} (proof of Lemma 8), we can
transform the matrix $(G_{u}+\mathcal{E})^{-1}B^{T}$ into the following
block-form one:
\begin{equation}
(G_{u}+\mathcal{E})^{-1}B^{T}=\left(
\begin{array}{l}
H_{3}\ \ \ \ \ \ \ \ \ \ \ \ \ \ \ \ \ \ \ \ H_{1} \\
O_{(r-q)\times (n-r+q)}\ \ \ (1/\varepsilon ^{2})I_{r-q}%
\end{array}%
\right) .  \label{block-form}
\end{equation}%
Now, the substitution of (\ref{P_0}), (\ref{z-block}) and (\ref{block-form})
into (\ref{subopt-min-str-1}) yields immediately the equation (\ref%
{derivation}).
\end{proof}

By calculation of the point-wise (with respect to $(z,t)\in E^{n}\times
[0,+\infty)$) limit of the upper block in (\ref{derivation}) for $%
\varepsilon \rightarrow 0^+$ we obtain the second state-feedback control in
the OOCP
\begin{equation}  \label{u_eps2}
u_{\varepsilon, 2}(z,t) = \left(%
\begin{array}{l}
\ \ \ \bar{u}_{1}^{\ast }(x,t) \\
\\
- (1/\varepsilon)\left[\big(P_{20}^{*}\big)^{T}x+P_{30}^{*}y+h_{2 0}(t)%
\right]%
\end{array}%
\right),
\end{equation}
where $\bar{u}_{1}^{\ast}(\cdot,\cdot)$ is given by (\ref{u_1^o*}).

\begin{lemma}
\label{admiss-u_eps_1,2}Let the assumptions (A1)-(A7) be valid. Then, there
exists a positive number $\hat{\varepsilon}$, ($\hat{\varepsilon}\le
\varepsilon_{2}$), such that for all $\varepsilon\in(0,\hat{\varepsilon}]$
the state feedback controls $u_{\varepsilon, i}(z,t)$, ($i=1,2$) are
admissible in the OOCP.
\end{lemma}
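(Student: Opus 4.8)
\textbf{Proof plan for Lemma \ref{admiss-u_eps_1,2}.}
The plan is to verify, for each $i=1,2$ and for all sufficiently small $\varepsilon>0$, the three requirements in Definition \ref{admiss-def-auxil} (existence/uniqueness of a locally absolutely continuous solution of the closed-loop system on $[0,+\infty)$, $z(\cdot)\in L^{2}$, and $u_{\varepsilon,i}(z(\cdot),\cdot)\in L^{2}$). Since both controls are affine in $z$ with coefficients that are measurable and locally bounded in $t$, the existence and uniqueness of a local absolutely continuous solution is immediate; the real content is the exponential decay of the closed-loop trajectory, which then gives membership in $L^{2}$ for both $z$ and $u_{\varepsilon,i}(z(t),t)$ at once (the latter because $u_{\varepsilon,i}$ is affine in $z$ with a forcing term $h_{i0}(t)$ that itself decays exponentially by (\ref{ineq-h_10}) and (\ref{ineq-h_20})). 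So the whole lemma reduces to showing that the closed-loop matrix of each control is Hurwitz (uniformly, in a suitable singularly perturbed sense) for $\varepsilon$ small.

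First I would treat $u_{\varepsilon,1}$. By construction it is the PCCP-optimal control with $P^{*}(\varepsilon)$ replaced by the frozen zero-order term $P_{0}^{*}(\varepsilon)$ and $h$ by $h_{0}(t,\varepsilon)$. Substituting (\ref{subopt-min-str-1}) into (\ref{new-eq}) gives a closed-loop system whose matrix is $A-S(\varepsilon)P_{0}^{*}(\varepsilon)$. I would write this matrix in the $2\times2$ block form analogous to (\ref{mathcal-A-blocks})--(\ref{mathcal-A_4}), replacing $P_{i}^{*}(\varepsilon)$ everywhere by the asymptotic blocks $P_{i0}^{*}$. The key point is that the lower-right block, divided by $\varepsilon$, tends as $\varepsilon\to0$ to $-S_{3}(0)P_{30}^{*}=-\big(D_{2}\big)^{1/2}$, which is Hurwitz by (\ref{eigenval-ineq-1}), while the "slow" reduced matrix obtained by the standard Schur-complement / order-reduction computation is exactly $\mathcal{A}_{0}=A_{1}-S_{0}P_{10}^{*}$ of (\ref{matr-tildA_10}), which is Hurwitz by (\ref{eigenval-ineq-2}). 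Invoking the standard singular-perturbation eigenvalue result (e.g.\ \cite{Kokotovic}, as already used for Lemma \ref{justif-asymp-sol-Riccati}), there is $\hat\varepsilon_{1}\le\varepsilon_{2}$ so that for $\varepsilon\in(0,\hat\varepsilon_{1}]$ the full block matrix $A-S(\varepsilon)P_{0}^{*}(\varepsilon)$ is Hurwitz; hence the homogeneous part of the closed-loop trajectory decays exponentially, the exponentially decaying forcing (coming from $f$ and from $h_{0}$) keeps the solution exponentially bounded, and conditions (b),(c) follow. In fact, since $P_{0}^{*}(\varepsilon)$ and $P^{*}(\varepsilon)$ differ by $O(\varepsilon)$ (Lemma \ref{justif-asymp-sol-Riccati}), one may alternatively deduce the Hurwitz property of $A-S(\varepsilon)P_{0}^{*}(\varepsilon)$ as a perturbation of the already-established Hurwitz matrix $\mathcal{A}(\varepsilon)=A-S(\varepsilon)P^{*}(\varepsilon)$ — but care is needed because $S(\varepsilon)$ carries a $1/\varepsilon^{2}$ block, so an $O(\varepsilon)$ perturbation of $P^{*}$ produces an $O(1/\varepsilon)$ perturbation of the product; the clean route is the block/singular-perturbation argument above, which is insensitive to this.

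Next I would handle $u_{\varepsilon,2}$, which by (\ref{u_eps2}) differs from $u_{\varepsilon,1}$ only in its upper block, where $K_{1}(\varepsilon)x+\varepsilon K_{2}(\varepsilon)y$ has been replaced by its $\varepsilon\to0$ limit, namely $\widetilde{G}^{-1}\widetilde{B}^{T}P_{10}^{*}x$ (and the $\varepsilon H_{1}h_{20}(t)$ term dropped). Thus the closed-loop matrix for $u_{\varepsilon,2}$ equals that for $u_{\varepsilon,1}$ modified only in its top block-row by an $O(\varepsilon)$ amount; repeating the same block singular-perturbation analysis, the reduced slow matrix is again $\mathcal{A}_{0}$ and the fast matrix is again $-\big(D_{2}\big)^{1/2}$, so there is $\hat\varepsilon_{2}\le\hat\varepsilon_{1}$ with the Hurwitz property for $\varepsilon\in(0,\hat\varepsilon_{2}]$, and the same $L^{2}$ conclusions follow. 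Setting $\hat\varepsilon=\hat\varepsilon_{2}$ finishes the lemma. The main obstacle, and the only place demanding genuine care, is precisely the block/singular-perturbation bookkeeping for the closed-loop matrices: one must correctly factor out the $1/\varepsilon$ in the second block-row (exactly as in (\ref{mathcal-A-blocks})), identify the fast subsystem matrix as $-S_{3}(0)P_{30}^{*}$, and verify that the slow (reduced) subsystem matrix coincides with $\mathcal{A}_{0}$ of (\ref{matr-tildA_10}); once that is in place the $L^{2}$ membership of $z(\cdot)$ and of $u_{\varepsilon,i}(z(\cdot),\cdot)$ is routine, using the exponential bounds (\ref{f-ineq}), (\ref{ineq-h_10}), (\ref{ineq-h_20}) on the forcing terms.
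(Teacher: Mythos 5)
Your proposal is correct and follows essentially the same route as the paper's Appendix B: substitute the control into (\ref{new-eq}), view the closed-loop system as a singularly perturbed linear system whose fast matrix tends to $-\big(D_{2}\big)^{1/2}$ and whose slow reduced matrix is $\mathcal{A}_{0}$, deduce exponential decay of the trajectory (the paper does this via the blockwise fundamental-matrix estimates of Proposition \ref{block-est-Psi}, you via the equivalent Kokotovic-type eigenvalue argument), and then read off conditions (b) and (c) of admissibility. Your explicit warning that one cannot simply perturb $\mathcal{A}(\varepsilon)$ because $S(\varepsilon)$ carries a $1/\varepsilon^{2}$ block is a point the paper leaves implicit, but the substance of the two arguments is the same.
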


The proof is presented in Appendix B.

\begin{remark}
Due to Lemma \ref{admiss-u_eps_1,2}, the set $M_{u}$ of admissible
state-feedback controls in the OOCP in nonempty. Therefore, due to Remark %
\ref{new-problem}, the infimum $J^{*}$ of the cost functional in the OOCP
(see (\ref{inf-J})) is finite.
\end{remark}

\begin{theorem}
\label{infJ=barJ} Let the assumptions (A1)-(A7) be valid. Then, the
following equality is satisfied:
\begin{equation}  \label{J*=barJ}
J^{*} = \bar{J}^{*},
\end{equation}
where $\bar{J}^{*}$ is the optimal value of the cost functional in the ROCP
given by (\ref{bar-J*}).
\end{theorem}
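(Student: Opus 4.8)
The strategy is to show $J^{*} \le \bar J^{*}$ and $J^{*} \ge \bar J^{*}$ separately, using the two families of controls $u_{\varepsilon,1}$ and $u_{\varepsilon,2}$ introduced above. For the upper bound $J^{*} \le \bar J^{*}$, the plan is to evaluate $J(u_{\varepsilon,1}(z,t))$ and show it tends to $\bar J^{*}$ as $\varepsilon \to 0^{+}$; since each $u_{\varepsilon,1}$ is admissible in the OOCP (Lemma \ref{admiss-u_eps_1,2}), this exhibits a sequence along which $J$ approaches $\bar J^{*}$, hence $J^{*} \le \bar J^{*}$. To compute this limit, I would compare $J(u_{\varepsilon,1})$ with $J_{\varepsilon}^{*}$, the optimal value of the PCCP. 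The control $u_{\varepsilon,1}$ is the PCCP-optimal control with $P^{*}(\varepsilon)$, $h(t,\varepsilon)$ replaced by their zero-order asymptotic approximations $P_{0}^{*}(\varepsilon)$, $h_{0}(t,\varepsilon)$; by Lemma \ref{justif-asymp-sol-Riccati} and Lemma \ref{asympt-h} these approximations differ from the true quantities by $O(\varepsilon)$ (with appropriate exponential decay in $t$ for $h$). Substituting $u_{\varepsilon,1}$ into the closed-loop dynamics (\ref{new-eq}) and into $J(u)$, and using that the closed-loop matrix is a perturbation of the Hurwitz matrix $\mathcal{A}(\varepsilon)$, one gets $|J(u_{\varepsilon,1}) - J_{\varepsilon}^{*}| \le c\varepsilon$; combined with Lemma \ref{J_eps*-asympt}, $|J_{\varepsilon}^{*} - \bar J^{*}| \le c\varepsilon$, this gives $J(u_{\varepsilon,1}) \to \bar J^{*}$.

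For the lower bound $J^{*} \ge \bar J^{*}$, the plan is to take an arbitrary admissible $u(z,t) \in M_{u}$ and show $J(u) \ge \bar J^{*}$. Writing $z = \mathrm{col}(x,y)$ as in (\ref{z-block}) and $u = \mathrm{col}(u^{(1)}, u^{(2)})$ according to the block structure of $B$, I would exploit the special form of $D$ (block-diagonal, with $D_{2} > 0$ acting only on the $y$-coordinates) and of $B_{1}, B_{2}$. The key algebraic observation is that, by (\ref{new-matr-B})--(\ref{newmatrB2}), the $x$-dynamics involve only the first $q$ components of $u$ (through $\widetilde B$ and $A_{2}$ coupling via $y$), so one can treat $y$ itself as an auxiliary control input for the $x$-subsystem: setting $\bar u = \mathrm{col}(u^{(1)}_{[1:q]}, y)$ (roughly), the $x$-equation becomes exactly the ROCP dynamics (\ref{out-dyn1}) with matrix $\bar B$, and the running cost $z^{T}Dz + u^{T}Gu = x^{T}D_{1}x + y^{T}D_{2}y + (u^{(1)})^{T}\widetilde G u^{(1)}$ dominates $\bar x^{T}D_{1}\bar x + \bar u^{T}\Theta\bar u$ with $\Theta$ as in (\ref{Theta}). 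Hence $J(u) \ge \bar J(\bar u) \ge \bar J^{*}$ by Lemma \ref{reduced-pr}, for every admissible $u$; this requires checking that the induced $\bar u$ is admissible in the ROCP, which follows from $z \in L^{2}$, $u \in L^{2}$ and the local-Lipschitz/measurability structure carried over from $M_{u}$.

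The main obstacle I anticipate is making the $O(\varepsilon)$ estimate for $|J(u_{\varepsilon,1}) - J_{\varepsilon}^{*}|$ genuinely rigorous: one must control the closed-loop trajectory generated by $u_{\varepsilon,1}$ on $[0,+\infty)$ uniformly in $\varepsilon$, which is a singularly perturbed linear system (note the $1/\varepsilon$ in the lower block of (\ref{derivation})), so naive Gronwall bounds are not enough — one needs the boundary-layer/reduced decomposition à la \cite{Kokotovic} together with the Hurwitz property from Lemma \ref{justif-asymp-sol-Riccati}, plus the exponential decay of $f$, $h_{10}$, $h_{20}$, to ensure the tail of the infinite-horizon integral is uniformly small. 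A secondary technical point is the bookkeeping in the lower-bound argument: one must verify that every term of $z^{T}Dz + u^{T}Gu$ not appearing in $\bar x^{T}D_{1}\bar x + \bar u^{T}\Theta\bar u$ is nonnegative (true since $D \ge 0$, $G \ge 0$) and that the reparametrization $u \mapsto \bar u$ exactly reproduces the ROCP dynamics; this is essentially the content already encoded in (\ref{newmatrB1})--(\ref{new-matr-D}) and should go through cleanly. Once both inequalities are in hand, (\ref{J*=barJ}) follows; I would also remark that, together with Lemma \ref{equal-inf}, this yields $\mathcal{J}^{*} = \bar J^{*}$, addressing objectives (I) and (II), and that $\{u_{\varepsilon_{k},1}\}$ (or $\{u_{\varepsilon_{k},2}\}$) with $\varepsilon_{k} \to 0^{+}$ is the desired minimizing sequence for objective (III).
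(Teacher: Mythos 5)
Your proposal is correct in substance, but it takes a genuinely different route from the paper on both halves of the argument. The paper proves the theorem by contradiction using only the regularization: for the upper bound it notes that $J\big(u_{\varepsilon }^{*}(z,t)\big)\leq J_{\varepsilon }\big(u_{\varepsilon }^{*}(z,t)\big)=J_{\varepsilon }^{*}\leq \bar{J}^{*}+c\varepsilon $ (no closed-loop asymptotics needed), and for the lower bound it observes that for \emph{any} admissible $\tilde{u}$ one has the exact identity $J_{\varepsilon }\big(\tilde{u}\big)=J\big(\tilde{u}\big)+b\varepsilon ^{2}$ with $b=\|\tilde{u}_{2}\|_{L^{2}}^{2}<+\infty $ independent of $\varepsilon $, whence $J\big(\tilde{u}\big)+b\varepsilon ^{2}\geq J_{\varepsilon }^{*}\geq \bar{J}^{*}-c\varepsilon $ and, letting $\varepsilon \rightarrow 0^{+}$, $J\big(\tilde{u}\big)\geq \bar{J}^{*}$. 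This one-line use of the finiteness of the singular-component energy is the key trick you do not exploit; it makes Lemma \ref{J_eps*-asympt} the only asymptotic input. Your upper bound, by contrast, goes through the closed-loop asymptotics of $u_{\varepsilon ,1}$; this is correct but duplicates the (harder, singularly perturbed) analysis that the paper defers to Theorem \ref{minim-sequance} and Appendix C, and is unnecessary here since the PCCP-optimal control itself already gives $J^{*}\leq J_{\varepsilon }^{*}$. Your lower bound is the more interesting divergence: the identification $\bar{u}=\mathrm{col}\big(u^{(1)},y\big)$ does reproduce the ROCP exactly — indeed $B_{1}u=\widetilde{B}u^{(1)}$ and $z^{T}Dz+u^{T}Gu=x^{T}D_{1}x+\bar{u}^{T}\Theta \bar{u}$ is an exact equality, not merely a domination — so $J(u)=\bar{J}(\bar{u})\geq \bar{J}^{*}$ for every admissible $u$, which explains structurally why the ROCP governs the infimum. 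What it costs you is the technical step of recasting the induced control as an element of $\bar{M}_{u}$ (freeze $y(t)$ as a known time function inside the feedback; boundedness of $y(t)$ follows from $z,\dot{z}\in L^{2}$), which you flag but should carry out. Both routes close the argument; the paper's is shorter, yours is more self-explanatory about the role of the reduced problem.
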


\begin{proof}
We prove the theorem by contradiction. Namely, let us assume that (\ref%
{J*=barJ}) is wrong. This means the fulfilment of the inequality
\begin{equation}  \label{J*neqbarJ_0}
J^{*} \ne \bar{J}^{*}.
\end{equation}

Let us show that the inequality (\ref{J*neqbarJ_0}) yields the inequality
\begin{equation}  \label{ineq-J*<barJ}
J^{*} < \bar{J}^{*}.
\end{equation}

First of all note, that the control $u_{\varepsilon}^{*}(z,t)$ (see (\ref%
{opt-contr-auxil})), being optimal in the PCCP for all $\varepsilon \in (0,%
\hat{\varepsilon}]$, belongs to the set $M_{u}$ for these values of $%
\varepsilon$. Now, using the equations (\ref{new-perf-ind}), (\ref{inf-J}), (%
\ref{perf-ind-auxil-pr-regul}), we directly obtain the following chain of
inequalities and equalities:
\begin{equation}  \label{chain}
J^{*} \le J\big(u_{\varepsilon}^{*}(z,t)\big) \le J_{\varepsilon}\big(%
u_{\varepsilon}^{*}(z,t)\big) = J_{\varepsilon}^{*},\ \ \ \ \varepsilon \in
(0,\hat{\varepsilon}].
\end{equation}
Moreover, from the inequality (\ref{ineq-J_eps*}), we have for all $%
\varepsilon \in (0,\hat{\varepsilon}]$
\begin{equation}  \label{ineq-aux1}
\bar{J}^{*} - c\varepsilon \le J_{\varepsilon}^{*} \le \bar{J}^{*} +
c\varepsilon.
\end{equation}
Remember that $c>0$ is independent of $\varepsilon$.

The inequalities (\ref{chain})-(\ref{ineq-aux1}) imply the inequality
\begin{equation}  \label{ineq-aux2}
J^{*} \le \bar{J}^{*} + c\varepsilon,\ \ \ \ \varepsilon \in (0,\hat{%
\varepsilon}].
\end{equation}
The inequalities (\ref{J*neqbarJ_0}) and (\ref{ineq-aux2}), yield
immediately the inequality (\ref{ineq-J*<barJ}).

Since (\ref{ineq-J*<barJ}) is valid, then there exists a state-feedback
control $\tilde{u}(z,t) \in M_{u}$ such that
\begin{equation}  \label{inew-aux3}
J^{*} \le J\big(\tilde{u}(z,t)\big) < \bar{J}^{*}.
\end{equation}

Since $u_{\varepsilon}^{*}(z,t)$ is the optimal control in the PCCP, then
the following inequality is satisfied for any $\varepsilon \in (0,\hat{%
\varepsilon}]$:
\begin{equation}  \label{inew-aux4}
J_{\varepsilon}^{*} = J_{\varepsilon}\big(u_{\varepsilon}^{*}(z,t)\big) \le
J_{\varepsilon}\big(\tilde{u}(z,t)\big) = J\big(\tilde{u}(z,t)\big) +
b\varepsilon^{2},
\end{equation}
where
\begin{equation}  \label{b}
0 \le b = \int_{0}^{_\infty}\tilde{u}_{2}^{T}\big(\tilde{z}(t),t\big)\tilde{u%
}_{2}\big(\tilde{z}(t),t\big)dt < + \infty;
\end{equation}
$\tilde{u}_{2}(z,t)$ is the lower block of the dimension $r-q$ of the vector
$\tilde{u}(z,t)$; $\tilde{z}(t)$, $t\ge 0$ is the solution of (\ref{new-eq})
generated by $\tilde{u}(z,t)$.

The inequalities (\ref{ineq-aux1}) and (\ref{inew-aux4}) directly lead to
the inequality
\begin{equation}
\bar{J}^{\ast }\leq J\big(\tilde{u}(z,t)\big)+c\varepsilon +b\varepsilon
^{2},\ \ \ \ \varepsilon \in (0,\hat{\varepsilon}],  \label{inew-aux5}
\end{equation}%
which yields immediately $\bar{J}^{\ast }\leq J\big(\tilde{u}(z,t)\big)$.
The latter contradicts to the right-hand side of the inequality (\ref%
{inew-aux3}). This contradiction proves the equality (\ref{J*=barJ}). Thus,
the theorem is proven.
\end{proof}

Consider a numerical sequence $\{\varepsilon_{k}\}$ such that
\begin{equation}  \label{eps-q}
0 < \varepsilon_{k} \le \hat{\varepsilon},\ \ \ k = 1,2,...;\ \ \ \ \ \ \
\lim_{k \rightarrow + \infty}\varepsilon_{k} = 0.
\end{equation}

For the system (\ref{new-eq}), consider the following two sequences of
state-feedback controls: $\{u_{\varepsilon_{k},i}(z,t)\}$, ($i=1,2$), ($k =
1,2,...$).

\begin{theorem}
\label{minim-sequance} Let the assumptions (A1)-(A7) be valid. Then, the
following limit equations are satisfied:
\begin{equation}  \label{lim-J-baru-q}
\lim_{k \rightarrow + \infty}J\big(u_{\varepsilon_{k},i}(z,t)\big) = J^{*},\
\ \ \ i=1,2,
\end{equation}
meaning that the sequences of state-feedback controls $\{u_{%
\varepsilon_{k},i}(z,t)\}$, ($i=1,2$), ($k = 1,2,...$) are minimizing in the
OOCP (\ref{new-eq})-(\ref{new-perf-ind}).
\end{theorem}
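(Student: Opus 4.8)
\textbf{Proof proposal for Theorem \ref{minim-sequance}.}
The plan is to show that both control sequences drive the cost functional $J$ to the value $\bar J^{*}$, which by Theorem \ref{infJ=barJ} equals $J^{*}$. For the sequence $\{u_{\varepsilon_{k},1}(z,t)\}$, the key observation is the sandwiching chain already exploited in the proof of Theorem \ref{infJ=barJ}: since $u_{\varepsilon}^{*}(z,t)$ is optimal in the PCCP and $u_{\varepsilon,1}(z,t)\in M_{u}$ (by Lemma \ref{admiss-u_eps_1,2}), we have
\begin{equation*}
J^{*}\le J\big(u_{\varepsilon,1}(z,t)\big)\le J_{\varepsilon}\big(u_{\varepsilon,1}(z,t)\big),\qquad \varepsilon\in(0,\hat\varepsilon].
\end{equation*}
So it suffices to estimate $J_{\varepsilon}\big(u_{\varepsilon,1}(z,t)\big)$ from above. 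The control $u_{\varepsilon,1}$ differs from the PCCP-optimal control $u_{\varepsilon}^{*}$ only in that $P^{*}(\varepsilon)$ and $h(t,\varepsilon)$ are replaced by their zero-order approximations $P_{0}^{*}(\varepsilon)$ and $h_{0}(t,\varepsilon)$; by Lemmas \ref{justif-asymp-sol-Riccati} and \ref{asympt-h} these approximations are $O(\varepsilon)$-close to the true quantities (with the appropriate exponential decay in $t$ for the $h$-terms). I would substitute $u_{\varepsilon,1}$ into the closed-loop dynamics \eqref{new-eq}, show that the resulting trajectory $z_{\varepsilon,1}(t)$ is $O(\varepsilon)$-close in $L^{2}$ to the PCCP-optimal trajectory (using Hurwitzness of $\mathcal A(\varepsilon)$ uniformly in $\varepsilon\in(0,\hat\varepsilon]$, which follows from Lemma \ref{justif-asymp-sol-Riccati}), and then expand $J_{\varepsilon}\big(u_{\varepsilon,1}(z,t)\big) = J_{\varepsilon}^{*}+O(\varepsilon)$. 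Combined with Lemma \ref{J_eps*-asympt}, this gives $J\big(u_{\varepsilon,1}(z,t)\big)\le \bar J^{*}+c\varepsilon$, and together with the lower bound $J^{*}\le J\big(u_{\varepsilon,1}(z,t)\big)$ and Theorem \ref{infJ=barJ}, letting $\varepsilon=\varepsilon_{k}\to 0$ yields \eqref{lim-J-baru-q} for $i=1$.

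For the sequence $\{u_{\varepsilon_{k},2}(z,t)\}$, the lower bound $J^{*}\le J\big(u_{\varepsilon,2}(z,t)\big)$ is again automatic (admissibility from Lemma \ref{admiss-u_eps_1,2}). For the upper bound, the natural route is to compare $u_{\varepsilon,2}$ with $u_{\varepsilon,1}$ directly: from the block representations \eqref{derivation} and \eqref{u_eps2}, together with \eqref{u_1^o*}, the upper blocks of $u_{\varepsilon,1}$ and $u_{\varepsilon,2}$ differ by $O(\varepsilon)$ (the terms $\varepsilon K_{2}(\varepsilon)y$, $\varepsilon H_{1}h_{20}(t)$, and the $O(\varepsilon)$ part of $K_{1}(\varepsilon)$ against $H_{3}P_{10}^{*}$), while the lower blocks are \emph{identical}. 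Hence $\|u_{\varepsilon,2}(z,t)-u_{\varepsilon,1}(z,t)\|$ is $O(\varepsilon)$ uniformly (with the right growth in $z$), so the two closed-loop trajectories are $O(\varepsilon)$-close in $L^{2}$ and $J\big(u_{\varepsilon,2}(z,t)\big) = J\big(u_{\varepsilon,1}(z,t)\big)+O(\varepsilon)$. Then \eqref{lim-J-baru-q} for $i=2$ follows from the $i=1$ case.

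The main obstacle, and the place where care is genuinely needed, is making the $L^{2}$-closeness of the closed-loop trajectories rigorous and $\varepsilon$-uniform. One must verify that the perturbed feedback matrices keep the closed-loop system exponentially stable with decay rate and constants independent of $\varepsilon\in(0,\hat\varepsilon]$ — this is where the singular $(1/\varepsilon)$-blocks in \eqref{mathcal-A-blocks} and in the feedback gains interact, and a naive Gronwall estimate would lose a factor $1/\varepsilon$. The way around it is to work in the same two-time-scale block structure used throughout Section 4: split $z=\mathrm{col}(x,y)$, observe that the fast variable $y$ is slaved (up to exponentially small boundary-layer terms governed by the Hurwitz matrix $-(D_{2})^{1/2}$, cf. \eqref{eigenval-ineq-1}) to the slow variable $x$, whose dynamics are an $O(\varepsilon)$-perturbation of the ROCP dynamics \eqref{out-dyn1} with the Hurwitz matrix $\mathcal A_{0}$ from \eqref{matr-tildA_10}. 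The exponential estimates \eqref{ineq-tilde(P-P_0)}, \eqref{ineq-h_1}, and \eqref{ineq-h_10}--\eqref{ineq-h_20} then furnish the uniform bounds, and the quadratic form in $J$ is estimated term by term using Cauchy--Schwarz. Since this argument closely parallels the asymptotic analysis already carried out in the PCCP, I would expect it to be routine but lengthy; it is a natural candidate for an appendix.
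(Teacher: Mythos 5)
Your strategy is sound and would yield the theorem, but it is genuinely different from the paper's proof (Appendix C). For $i=1$ you squeeze $J\big(u_{\varepsilon,1}\big)$ between $J^{*}$ and $J_{\varepsilon}\big(u_{\varepsilon,1}\big)$ and then estimate the latter by perturbing away from the PCCP optimum, $J_{\varepsilon}\big(u_{\varepsilon,1}\big)=J_{\varepsilon}^{*}+O(\varepsilon)$; the paper instead evaluates $J\big(u_{\varepsilon,1}\big)$ head-on: it substitutes $u_{\varepsilon,1}$ into the dynamics, builds the zero-order Boundary Function Method approximation $\{x_{0}^{o},y_{0}^{o},y_{0}^{b}\}$ of the closed-loop trajectory (\ref{eq-x-closed})--(\ref{eq-y-closed}), proves $\big|J\big(u_{\varepsilon,1}\big)-J_{0}\big|\le c\varepsilon$ with $J_{0}$ given by (\ref{J_0}), and then identifies $J_{0}=\bar{J}^{*}$ by observing that the outer slow trajectory $x_{0}^{o}$ coincides with the ROCP optimal trajectory (\ref{bar-x-x^o}); Theorem \ref{infJ=barJ} closes the loop. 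Both routes require exactly the two-time-scale closed-loop estimate you correctly isolate as the crux (and which the paper states as Lemma \ref{justif-asymp-x^as-y^as}), so neither is shorter; what the paper's version buys is that the limit value is computed explicitly and tied structurally to the ROCP, whereas yours leans on the already-proved Lemma \ref{J_eps*-asympt} and avoids recomputing the cost integral. One caution on your $i=1$ step: the controls $u_{\varepsilon,1}$ and $u_{\varepsilon}^{*}$ are \emph{not} $O(\varepsilon)$-close in their lower blocks — the factor $(1/\varepsilon^{2})$ in (\ref{block-form}) against the $O(\varepsilon^{2})$ discrepancy $\varepsilon\big(P_{i}^{*}(\varepsilon)-P_{i0}^{*}\big)$ leaves an $O(1)$ gap there — so the bound $J_{\varepsilon}\big(u_{\varepsilon,1}\big)\le J_{\varepsilon}^{*}+c\varepsilon$ must be argued through the $\varepsilon^{2}$ weighting of that block together with the fact that the bracket $\big(P_{2}^{*}\big)^{T}x+P_{3}^{*}y+h_{2}$ is only $O(1)$ on a boundary layer of width $O(\varepsilon)$; your two-time-scale remarks cover this, but it deserves to be said explicitly. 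For $i=2$ your direct comparison of $u_{\varepsilon,2}$ with $u_{\varepsilon,1}$ (identical lower blocks, upper blocks differing by $O(\varepsilon)$ via (\ref{derivation}), (\ref{u_eps2}), (\ref{u_1^o*})) is cleaner than the paper's terse "proven similarly," which implicitly asks the reader to rerun the whole boundary-layer construction.
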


The proof of the theorem is presented in Appendix C.

\begin{remark}
\label{connect-OOCP-ROCP}Note that the upper block of the OOCP minimizing
control sequence $\{u_{\varepsilon _{k},2}(z,t)\}$, $(k=1,2,...)$ and the
infimum of the cost functional in the OOCP coincide with the upper block of
the optimal state-feedback control and the optimal value of the cost
functional, respectively, in the ROCP (\ref{out-dyn1})-(\ref{bar-J}). The
latter control problem is regular and is of a smaller dimension than the
OOCP. Thus, in order to solve the singular OOCP, one has to solve the
smaller dimension regular ROCP, construct two gain matrices $P_{20}^{\ast }$
and $P_{30}^{\ast }$, using the equations (\ref{tilde-P_320}), and construct
the vector-valued function $h_{20}(t)$, using the equation (\ref{h_20}).
\end{remark}

\section{Example: Infinite Horizon Tracking}

Consider a body of the unit mass subject to an actuator force $u(t)$. Let $%
\tilde{x}(t)$ be the inertial position of the body. This dynamics is
described by the following differential equation and initial conditions
\begin{equation}  \label{dif-eq-example}
\frac{d^{2}\tilde{x}(t)}{dt^{2}}=u(t),\ \ \ \ t\ge 0,\ \ \ \ \tilde{x}(0)=%
\tilde{x}_{0},\ \ \ \ \frac{d\tilde{x}(0)}{dt}=\tilde{x}_{0}^{^{\prime }}.
\end{equation}
Let us denote
\begin{equation}  \label{y=dot-x}
\tilde{y}(t)\overset{\triangle}{=}\frac{d\tilde{x}(t)}{dt},\ \ \ \ t\ge 0,\
\ \ \ \ \tilde{y}_{0}\overset{\triangle}{=}\tilde{x}_{0}^{^{\prime }}.
\end{equation}
Thus, the problem (\ref{dif-eq-example}) can be rewritten as:
\begin{equation}  \label{eq-x-example}
\frac{d\tilde{x}(t)}{dt}=\tilde{y}(t),\ \ \ \ t\ge 0,\ \ \ \ \tilde{x}(0)=%
\tilde{x}_{0},
\end{equation}
\begin{equation}  \label{eq-y-example}
\frac{d\tilde{y}(t)}{dt}=u(t),\ \ \ \ t\ge 0,\ \ \ \ \tilde{y}(0)=\tilde{y}%
_{0},
\end{equation}
where $\tilde{x}(t)$ and $\tilde{y}(t)$ are the scalar state variables, $%
u(t) $ is the scalar control.

The objective of the control of the system (\ref{eq-x-example})-(\ref%
{eq-y-example}) is a trajectory tracking. Let $\tilde{x}_{\mathrm{nom}}(t)=%
\tilde{f}_{1}(t)$ and $\tilde{y}_{\mathrm{nom}}(t)=\tilde{f}_{2}(t)$, $%
t\in[0,+\infty)$, be given nominal trajectories of the body position and the
velocity, respectively. Thus, the performance index for the system's control
can be chosen as:
\begin{equation}  \label{perf-ind-example}
\tilde{J}(u)\overset{\triangle}{=} \int_{0}^{+\infty}\Big(d_{1}\big(\tilde{x}%
(t)-\tilde{f}_{1}(t)\big)^{2} + d_{2}\big(\tilde{y}(t)-\tilde{f}_{2}(t)\big)%
^{2}\Big)dt\rightarrow \min_{u(t)},
\end{equation}
where $d_{1}>0$ and $d_{2}>0$ are given constants.

\begin{remark}
Since the control $u(t)$ does not appear in the cost functional $J(u)$, the
optimal control problem (\ref{eq-x-example})-(\ref{perf-ind-example}) is
singular.
\end{remark}

Also, let us choose $\tilde{f}_{1}(t)$ and $\tilde{f}_{2}(t)$ as:
\begin{equation}  \label{tilde-f_1,2}
\tilde{f}_{1}(t)=\tilde{a}_{1}\exp(-\gamma t),\ \ \ \ \tilde{f}_{2}(t)=%
\tilde{a}_{2}\exp(-\gamma t),
\end{equation}
where $\tilde{a}_{1}>0$, $\tilde{a}_{2}>0$ and $\gamma>0$ are given
constants.

Now, we make the following transformation of the state variables in the
problem (\ref{eq-x-example})-(\ref{perf-ind-example}):
\begin{equation}  \label{trans-example}
x(t)=\tilde{x}(t)-\tilde{f}_{1}(t),\ \ \ \ y(t)=\tilde{y}(t)-\tilde{f}%
_{2}(t).
\end{equation}
Due to this transformation, we obtain a new optimal control problem,
equivalent to (\ref{eq-x-example})-(\ref{perf-ind-example}). Namely,
\begin{equation}  \label{new-eq-x-ex}
\frac{dx(t)}{dt}=y(t)+f_{1}(t),\ \ \ \ t\ge 0,\ \ \ \ x(0)=x_{0},
\end{equation}
\begin{equation}  \label{new-eq-y-ex}
\frac{dy(t)}{dt}=u(t)+f_{2}(t),\ \ \ \ t\ge 0,\ \ \ \ y(0)=y_{0},
\end{equation}
\begin{equation}  \label{perf-ind-example-2}
J(u)\overset{\triangle}{=} \int_{0}^{+\infty}\Big(d_{1}\big(x(t)\big)^{2} +
d_{2}\big(y(t)\big)^{2}\Big)dt\rightarrow \min_{u(t)},
\end{equation}
where
\begin{equation}  \label{f_1,2}
f_{1}(t)=a_{1}\exp(-\gamma t),\ \ \ \ \ f_{2}(t)=a_{2}\exp(-\gamma t),\ \ \
\ t\ge 0,
\end{equation}
\begin{equation}  \label{a_1,2}
a_{1}=\tilde{a}_{1}\gamma+\tilde{a}_{2},\ \ \ \ \ a_{2}=\tilde{a}_{2}\gamma,
\end{equation}
\begin{equation}  \label{x,y_0}
x_{0}=\tilde{x}_{0}-\tilde{a}_{1},\ \ \ \ \ y_{0}=\tilde{x}_{0}-\tilde{a}%
_{2}.
\end{equation}

In the problem (\ref{new-eq-x-ex})-(\ref{perf-ind-example-2}), the nominal
trajectory is $\{x_{\mathrm{nom}}(t)\equiv 0 , y_{\mathrm{nom}}(t)\equiv 0\}$%
.

Based on the results of the previous sections, proceed to solution of (\ref%
{new-eq-x-ex})-(\ref{perf-ind-example-2}). First of all, let us note that
this problem is a particular case of the OOCP (\ref{new-eq})-(\ref%
{new-perf-ind}) for $n=2$, $r=1$, $q=0$, and
\begin{equation}  \label{example-data-1}
A_{1}=0,\ \ \ A_{2}=1,\ \ \ A_{3}=0,\ \ A_{4}=0,\ \ \ B_{1}=0,\ \ \ B_{2}=1,
\end{equation}
\begin{equation}  \label{example-data-2}
D_{1}=d_{1},\ \ \ D_{2}=d_{2},\ \ \ G=0.
\end{equation}

For the problem (\ref{new-eq-x-ex})-(\ref{perf-ind-example-2}), $S_{0}=\left(%
\frac{1}{d_{2}}+1\right)$, and the equation (\ref{eq-P_10^o-new-form})
becomes
\begin{equation}  \label{eq-P_10-ex}
-\left(\frac{1}{d_{2}}+1\right)\big(P_{1 0}\big)^{2}+d_{1}=0.
\end{equation}
The matrices $\bar{B}$ and $F_{1}$ (see (\ref{B^o}) and (\ref{F_1^TF_1}))
are the scalars $\bar{B}=A_{2}=1$ and $F_{1}=1/\sqrt{d_{1}}>0$, meaning that
the assumptions (A6) and (A7) are valid. The equation (\ref{eq-P_10-ex}) has
two solutions, positive and negative. We choose the positive solution of
this equation, i.e.,
\begin{equation}  \label{P_10*-ex}
P_{1 0}^{*}=\sqrt{\frac{d_{1}d_{2}}{1+d_{2}}}.
\end{equation}
Due to this equation and the equation (\ref{tilde-P_320}), we have
\begin{equation}  \label{P_230*-ex}
P_{2 0}^{*}=\sqrt{\frac{d_{1}}{1+d_{2}}},\ \ \ \ \ P_{3 0}^{*}=\sqrt{d_{2}}.
\end{equation}
Using (\ref{matr-tildA_10}), we obtain
\begin{equation}  \label{mathcal A_0}
{\mathcal{A}}_{0}=-\sqrt{\frac{d_{1}(1+d_{2})}{d_{2}}}<0.
\end{equation}

Now, let us obtain $h_{1 0}(t)$, $h_{2 0}(t)$ and $s_{0}(t)$. Using the
equations (\ref{h_20}), (\ref{h_10}) and (\ref{s_0}), as well as the data of
the example (\ref{f_1,2}), (\ref{example-data-1}-(\ref{example-data-2}), we
obtain
\begin{equation}  \label{h_10-exmple}
h_{1 0}(t)=\frac{a_{1}P_{1 0}^{*}}{\gamma-{\mathcal{A}}_{0}}\exp(-\gamma
t),\ \ \ \ t \ge 0,
\end{equation}
\begin{equation}  \label{h_20-exmple}
h_{2 0}(t)=\frac{a_{1}P_{2 0}^{*}}{\gamma-{\mathcal{A}}_{0}}\exp(-\gamma
t),\ \ \ \ t \ge 0,
\end{equation}
\begin{equation}  \label{s_0-ex}
s_{0}(t)=\frac{a_{1}^{2}P_{1 0}^{*}(2\gamma-{\mathcal{A}}_{0})}{%
2\gamma(\gamma-{\mathcal{A}}_{0})^{2}}\exp(-2\gamma t),\ \ \ \ t \ge 0.
\end{equation}

Due to Theorem \ref{infJ=barJ}, and the equations (\ref{bar-J*}), (\ref%
{h_10-exmple}) and (\ref{s_0-ex}), the infimum of the cost functional $J(u)$
in the optimal control problem (\ref{new-eq-x-ex})-(\ref{perf-ind-example-2}%
) is
\begin{equation}
\inf_{u}J(u)=P_{10}^{\ast }\left[ \left( x_{0}+\frac{a_{1}}{\gamma -{%
\mathcal{A}}_{0}}\right) ^{2}-\frac{a_{1}^{2}{\mathcal{A}}_{0}}{2\gamma
(\gamma -{\mathcal{A}}_{0})^{2}}\right] >0.  \label{inf-ex}
\end{equation}%
Finally, using Theorem \ref{minim-sequance}, as well as the equation (\ref%
{derivation}) and the fact that $q=0$, yields the minimizing sequence in the
optimal control problem (\ref{new-eq-x-ex})-(\ref{perf-ind-example-2})
\begin{equation}
u_{\varepsilon _{k}}(z,t)=-\frac{1}{\varepsilon _{k}}\big(P_{20}^{\ast
}x+P_{30}^{\ast }y+h_{20}(t)\big),\ \ \ \ z=\mathrm{col}(x,y),\ \ \
k=1,2,....  \label{minim-seq-ex}
\end{equation}

\begin{figure}
[H]
\begin{center}
\includegraphics[width=\textwidth]{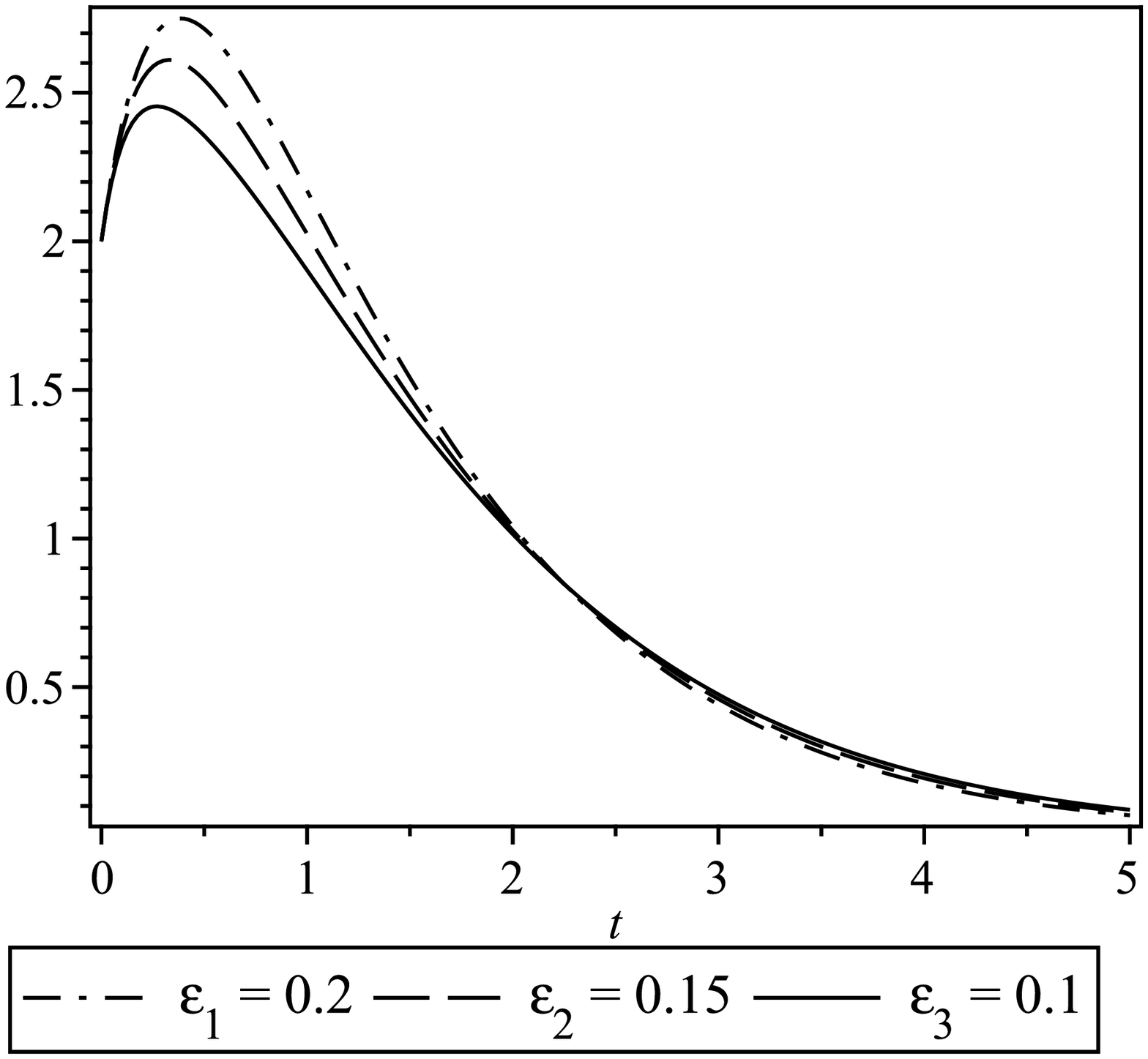}
\end{center}
\caption{$x$-component of the trajectory of the system
(\ref{new-eq-x-ex})-(\ref{new-eq-y-ex})} \label{x-compon1}
\end{figure}

\begin{figure}
[H]
\begin{center}
\includegraphics[width=\textwidth]{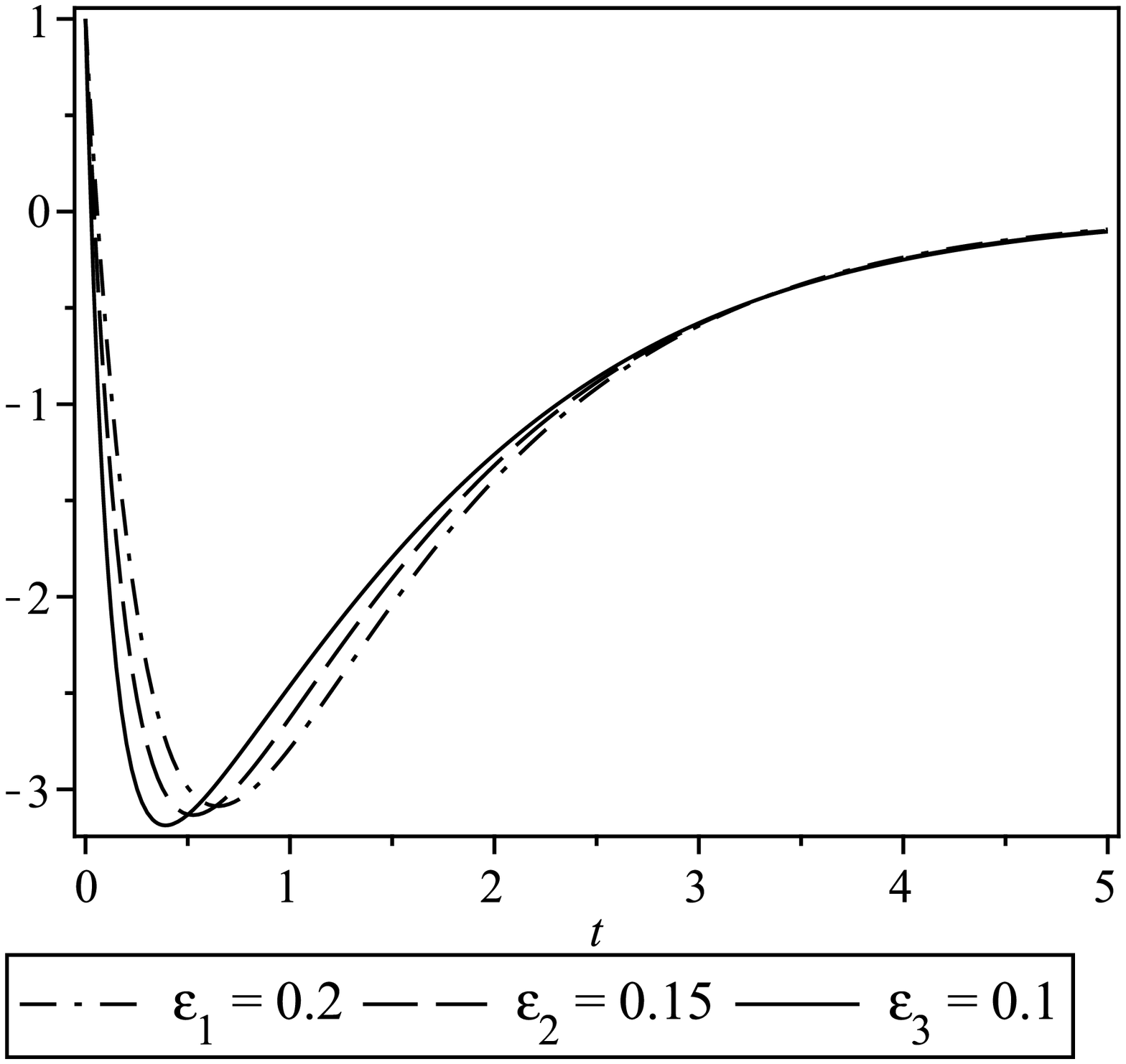}
\end{center}
\caption{$y$-component of the trajectory of the system
(\ref{new-eq-x-ex})-(\ref{new-eq-y-ex})} \label{y-compon1}
\end{figure}

\begin{figure}
[H]
\begin{center}
\includegraphics[width=\textwidth] {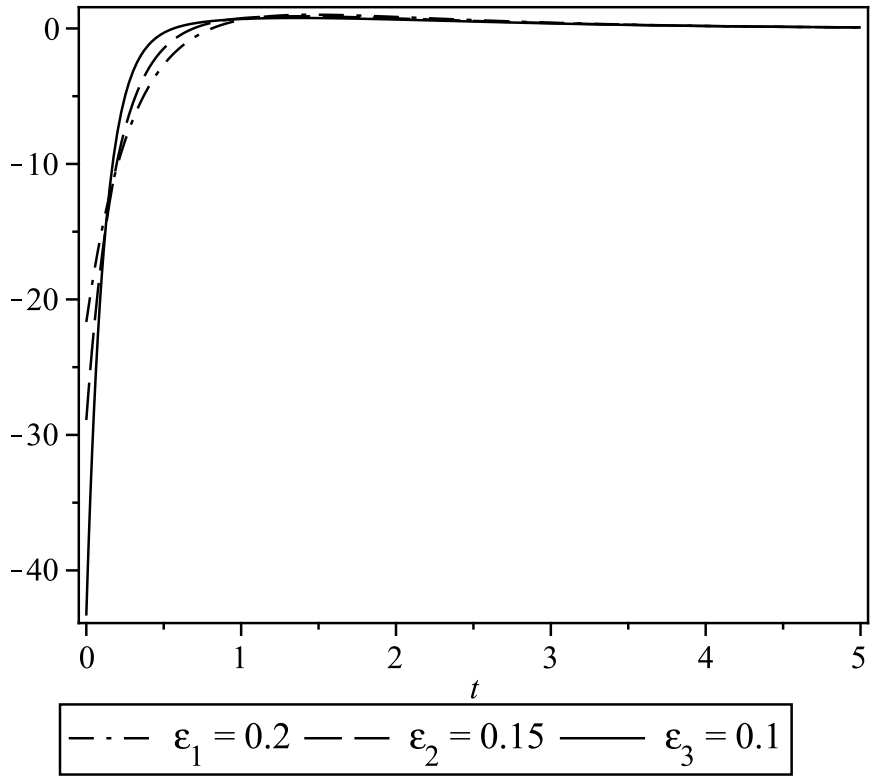}
\end{center}
\caption{Time realization of $u_{\varepsilon_{k}}(z,t)$}
\label{u-time-real1}
\end{figure}

In Figs. \ref{x-compon1} and \ref{y-compon1}, the $x$- and
$y$-components of the trajectory of the system
(\ref{new-eq-x-ex})-(\ref{new-eq-y-ex}),
generated by the control (\ref{minim-seq-ex}) with different values of $%
\varepsilon _{k}$, are depicted for the following numerical data: $a_{1}=4$,
$a_{2}=2$, $\gamma =1$, $x_{0}=2$, $y_{0}=1$, $d_{1}=2$, $d_{2}=1$. It is
seen that both components tend to the components of the nominal trajectory
for $t\rightarrow +\infty $. Moreover, for the smaller $\varepsilon _{k}$,
the rate of this convergence is larger. In Fig. \ref{u-time-real1}, the time
realization of the control (\ref{minim-seq-ex}) along the trajectory of the
system (\ref{new-eq-x-ex})-(\ref{new-eq-y-ex}), depicted in Figs. \ref%
{x-compon1}--\ref{y-compon1}, is presented. It is seen that for $\varepsilon
_{k}$ approaching zero, this time realization tends to an impulse-like
function with the impulse at $t=0$.

\section{Concluding Remarks}

\textbf{CRI.} In this paper, an infinite horizon linear-quadratic optimal
control problem for a system with known time-varying additive disturbance is
considered. A weight matrix of the control cost in the cost functional of
this problem is singular but, in general, non-zero. Due to this singularity
of the weight matrix, the optimal control problem itself is singular.
However, if the weight matrix is non-zero, only a part of the coordinates of
the control is singular, while the others are regular.\newline
\textbf{CRII.} Subject to proper assumptions, the linear system of the
control problem is transformed equivalently to a new system consisting of
three modes. The first mode is uncontrolled directly (i.e. it does not
contain the control at all), the second mode is controlled directly only by
the regular coordinates of the control, while the third mode is controlled
directly by the entire control. Due to this transformation, a new control
problem, equivalent to the initially formulated one, is obtained. This new
singular optimal control problem is considered as an original one in the
paper.\newline
\textbf{CRIII.} The original control problem is solved by a regularization
approach, i.e., by its approximate transformation to an auxiliary regular
optimal control problem. The latter has the same equation of dynamics and a
similar cost functional augmented by an infinite horizon integral of the
squares of the singular control coordinates with a small positive weight.
Hence, the auxiliary problem is an infinite horizon linear-quadratic optimal
control problem with partial cheap control. An asymptotic analysis of this
problem is carried out.\newline
\textbf{CRIV.} Based on this asymptotic analysis, it is shown that the
infimum of the cost functional in the original (singular) optimal control
problem is finite. The explicit expression of this infimum is derived. The
minimizing state-feedback control sequence in the original problem also is
designed. Some coordinates of the minimizing sequence are convergent in the
class of regular functions. Namely, the coordinates of the minimizing
sequence, corresponding to the regular control coordinates, are point-wise
convergent in this class of functions. The corresponding limits constitute
the regular part of the optimal state-feedback control in the original
problem.\newline
\textbf{CRV.} It is shown that the infimum of the cost functional and the
regular part of the optimal state-feedback control in the original singular
problem coincide with the optimal value of the cost functional and the upper
block of the optimal state-feedback control, respectively, in a reduced
dimension regular optimal control problem (reduced control problem). The
dimension of the upper block of the optimal state-feedback control in the
reduced control problem equals to the number of the regular coordinates of
the control in the original problem. The reduced control problem is
connected with the zero-order asymptotic solutions of the equations, arising
in the optimality conditions of the auxiliary partial cheap control problem.%
\newline
\textbf{CRVI.} Using the obtained theoretical results, the problem of
singular infinite horizon trajectory tracking with two scalar state
variables and a scalar control is solved. Numerical simulation shows that
the state-feedback controls of the minimizing sequence generate
trajectories, approaching well enough the nominal ones. The
time-realizations of these state-feedback controls tend to an impulse-like
function with the impulse at $t=0$.

\section{Appendix A: Proof of Lemma \protect\ref{asympt-h}}

\subsection{Auxiliary results}

Let for a given $\varepsilon>0$, the matrix-valued function $%
\Psi(t,\varepsilon)$, $t\ge 0$ of the dimension $(m_{1}+m_{2})$ be the
solution of the following initial-value problem:
\begin{equation}  \label{auxil-eq-Psi}
\frac{d\Psi(t,\varepsilon)}{dt}=C(\varepsilon)\Psi(t,\varepsilon),\ \ \ \
\Psi(0,\varepsilon)=I_{m_{1}+m_{2}},
\end{equation}
where a given $(m_{1}+m_{2})$-matrix $C(\varepsilon)$ has the block form
\begin{equation}  \label{C-block}
C(\varepsilon)=\left(%
\begin{array}{l}
C_{1}(\varepsilon)\ \ \ \ \ \ \ \ \ \ C_{2}(\varepsilon) \\
(1/\varepsilon)C_{3}(\varepsilon)\ \ \ (1/\varepsilon)C_{4}(\varepsilon)%
\end{array}%
\right),
\end{equation}
the blocks $C_{1}(\varepsilon)$, $C_{2}(\varepsilon)$, $C_{3}(\varepsilon)$
and $C_{4}(\varepsilon)$ are of the dimensions $m_{1}\times m_{1}$, $%
m_{1}\times m_{2}$, $m_{2}\times m_{1}$ and $m_{2}\times m_{2}$,
respectively.

Represent the matrix $\Psi(t,\varepsilon)$ in the block form
\begin{equation}  \label{block-Psi}
\Psi(t,\varepsilon)=\left(%
\begin{array}{l}
\Psi_{1}(t,\varepsilon)\ \ \ \ \Psi_{2}(t,\varepsilon) \\
\Psi_{3}(t,\varepsilon)\ \ \ \ \Psi_{4}(t,\varepsilon)%
\end{array}%
\right),
\end{equation}
where the blocks $\Psi_{1}(t,\varepsilon)$, $\Psi_{2}(t,\varepsilon)$, $%
\Psi_{3}(t,\varepsilon)$, and $\Psi_{4}(t,\varepsilon)$ are of the
dimensions $m_{1}\times m_{1}$, $m_{1}\times m_{2}$, $m_{2}\times m_{1}$ and
$m_{2}\times m_{2}$, respectively.

\begin{proposition}
\label{block-est-Psi}Let there exists a positive number $\check{\varepsilon}$
such that the matrices $C_{j}(\varepsilon)$, $(j=1,..4)$ are continuous with
respect to $\varepsilon\in[0,\check{\varepsilon}]$. Let, there exists a
positive number $\omega$ such that all the eigenvalues $\lambda\Big(%
C_{4}(\varepsilon)\Big)$ of the matrix $C_{4}(\varepsilon)$ satisfy the
inequality
\begin{equation}  \label{eigenval-ineq-3}
\mathrm{Re}\lambda\Big(C_{4}(\varepsilon)\Big) < -\omega,\ \ \ \ \varepsilon%
\in[0,\check{{\varepsilon}}].
\end{equation}
Let, there exists a positive number $\kappa$ such that all the eigenvalues $%
\lambda\Big(\bar{C}(\varepsilon)\Big)$ of the matrix $\bar{C}(\varepsilon)%
\overset{\triangle}{=}C_{1}(\varepsilon)-C_{2}(\varepsilon)C_{4}^{-
1}(\varepsilon)C_{3}(\varepsilon)$ satisfy the inequality
\begin{equation}  \label{eigenval-ineq-4}
\mathrm{Re}\lambda\Big(\bar{C}(\varepsilon)\Big) < -\kappa,\ \ \ \
\varepsilon\in[0,\check{{\varepsilon}}].
\end{equation}
Then, there exists a positive number $\bar{\varepsilon}$, ($\bar{\varepsilon}%
\le \check{{\varepsilon}})$, such that for all $\varepsilon \in (0,\bar{%
\varepsilon}]$, the following inequalities are satisfied:
\begin{equation}  \label{ineq-Psi-1-3}
\big\|\Psi_{i}\big(t,\varepsilon\big)\big\|\le a\exp\big(-\kappa t\big),\ \
\ \ i=1,3,\ \ \ \ 0\le t < +\infty,
\end{equation}
\begin{equation}  \label{ineq-Psi-2}
\big\|\Psi_{2}\big(t,\varepsilon\big)\big\|\le a\varepsilon\exp\big(-\kappa t%
\big),\ \ \ \ 0\le t < +\infty,
\end{equation}
\begin{equation}  \label{ineq-Psi-4}
\big\|\Psi_{4}\big(t,\varepsilon\big)\big\|\le a\Big(\varepsilon\exp\big(%
-\kappa t\big)+\exp\big(-\omega t/\varepsilon\big)\Big),\ \ \ \ 0\le t <
+\infty,
\end{equation}
where $a>0$ is some constant independent of $\varepsilon$.
\end{proposition}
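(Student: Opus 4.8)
The plan is to analyze the singularly perturbed linear system (\ref{auxil-eq-Psi}) via the standard two-time-scale decoupling transformation (the Chang transformation), which block-diagonalizes $C(\varepsilon)$ into a "slow" block $\approx \bar C(\varepsilon)$ and a "fast" block $\approx (1/\varepsilon)C_4(\varepsilon)$, and then read off the estimates for $\Psi_1,\dots,\Psi_4$ from the fundamental matrices of the two decoupled subsystems. Concretely, I would look for matrices $L(\varepsilon)$ (of dimension $m_2\times m_1$) and $M(\varepsilon)$ (of dimension $m_1\times m_2$) solving the algebraic equations
\begin{equation}
C_4(\varepsilon)L - \varepsilon L\big(C_1(\varepsilon)-C_2(\varepsilon)L\big) - C_3(\varepsilon)=0,
\label{L-chang}
\end{equation}
\begin{equation}
-M\big(C_4(\varepsilon)-\varepsilon L C_2(\varepsilon)\big)+\varepsilon\big(C_1(\varepsilon)-C_2(\varepsilon)L\big)M+C_2(\varepsilon)=0,
\label{M-chang}
\end{equation}
and invoke the implicit function theorem together with the nonsingularity of $C_4(0)$ (guaranteed by (\ref{eigenval-ineq-3})) to get continuous, bounded solutions $L(\varepsilon)$, $M(\varepsilon)$ on some $[0,\bar\varepsilon]$ with $L(0)=C_4^{-1}(0)C_3(0)$, $M(0)=C_2(0)C_4^{-1}(0)$.

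**Next I would** apply the transformation $\xi = (I_{m_1}-\varepsilon M L)\,\eta_{\mathrm{slow}} + \varepsilon M\,\eta_{\mathrm{fast}}$, $\zeta = L\,\eta_{\mathrm{slow}} + \eta_{\mathrm{fast}}$ (or its inverse form) to turn (\ref{auxil-eq-Psi}) into a block-diagonal system: $\dot\eta_{\mathrm{slow}} = \bar A_s(\varepsilon)\eta_{\mathrm{slow}}$ with $\bar A_s(\varepsilon) = C_1(\varepsilon)-C_2(\varepsilon)L(\varepsilon) \to \bar C(0)$, and $\varepsilon\dot\eta_{\mathrm{fast}} = \bar A_f(\varepsilon)\eta_{\mathrm{fast}}$ with $\bar A_f(\varepsilon) = C_4(\varepsilon)-\varepsilon L(\varepsilon)C_2(\varepsilon) \to C_4(0)$. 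By continuity of eigenvalues, for $\varepsilon$ small enough the eigenvalues of $\bar A_s(\varepsilon)$ stay to the left of $-\kappa + \delta$ and those of $\bar A_f(\varepsilon)$ to the left of $-\omega+\delta$; hence $\|\exp(\bar A_s(\varepsilon)t)\|\le a\exp(-\kappa t)$ and $\|\exp(\bar A_f(\varepsilon)t/\varepsilon)\|\le a\exp(-\omega t/\varepsilon)$, uniformly in $\varepsilon$, possibly after shrinking $\kappa,\omega$ slightly (which is harmless since the statement only asserts \emph{some} such constants). Then $\Psi_1,\Psi_2,\Psi_3,\Psi_4$ are recovered as explicit linear combinations of $\exp(\bar A_s t)$, $\exp(\bar A_f t/\varepsilon)$ with coefficient matrices built from $L(\varepsilon)$, $M(\varepsilon)$, $I-\varepsilon ML$; matching the block that carries the extra factor $\varepsilon$ (from the $\varepsilon M$ term in the transformation) gives the $O(\varepsilon)$ bound in (\ref{ineq-Psi-2}), the combination of a slow and a fast exponential gives (\ref{ineq-Psi-4}), and the remaining two blocks obey the pure slow bound (\ref{ineq-Psi-1-3}).

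**The main obstacle** is establishing the solvability of the nonlinear algebraic equations (\ref{L-chang})–(\ref{M-chang}) uniformly down to $\varepsilon=0$ and the resulting uniform-in-$\varepsilon$ exponential bound on the fast transition matrix $\exp(\bar A_f(\varepsilon)t/\varepsilon)$; one must be careful that the $1/\varepsilon$ scaling does not destroy the uniformity, which is exactly where the hypothesis (\ref{eigenval-ineq-3}) on $C_4(\varepsilon)$ (with a single $\omega$ valid on all of $[0,\check\varepsilon]$) is used. A clean alternative, avoiding the Chang transformation altogether, is to cite the Tikhonov-type estimates for the fundamental matrix of a singularly perturbed linear system directly from \cite{Kokotovic} (Sections 3.4 and 3.6.1), which already package exactly the bounds (\ref{ineq-Psi-1-3})–(\ref{ineq-Psi-4}); I would present the Chang-transformation argument as the conceptual route and defer the routine exponential-estimate bookkeeping to that reference.
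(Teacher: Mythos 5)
Your proposal is correct, but it is a genuinely different (and much more detailed) route than the paper's: the paper's entire proof is a one-line citation to \cite{Gl-2003} (Theorem 2.3), which packages precisely the blockwise estimates (\ref{ineq-Psi-1-3})--(\ref{ineq-Psi-4}) for the fundamental matrix of a singularly perturbed linear system; your closing remark about deferring to a reference is exactly what the authors do, just with a different source. Your Chang-transformation argument is the standard machinery underlying that cited result, and the bookkeeping you describe does produce the right powers of $\varepsilon$: writing $\Psi=T(\varepsilon)\,\mathrm{diag}\big(e^{\bar A_s t},e^{\bar A_f t/\varepsilon}\big)\,T^{-1}(\varepsilon)$ with off-diagonal blocks of $T,T^{-1}$ of sizes $O(\varepsilon)$ and $O(1)$ respectively gives $\Psi_2=O(\varepsilon)$ times a sum of slow and fast exponentials, $\Psi_4$ a fast exponential plus an $O(\varepsilon)$ slow one, and $\Psi_1,\Psi_3$ dominated by the slow exponential (using $e^{-\omega t/\varepsilon}\le e^{-\kappa t}$ for $\varepsilon\le\omega/\kappa$). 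Two small corrections. First, the hypotheses give only continuity of $C_j(\varepsilon)$ in $\varepsilon$, so the implicit function theorem is not directly available for (\ref{L-chang})--(\ref{M-chang}); use instead the uniform contraction $L\mapsto C_4^{-1}(\varepsilon)\big(C_3(\varepsilon)+\varepsilon L(C_1(\varepsilon)-C_2(\varepsilon)L)\big)$, which works because $C_4^{-1}(\varepsilon)$ is uniformly bounded on the compact interval $[0,\check\varepsilon]$ by (\ref{eigenval-ineq-3}). Second, your caveat that one may need to ``shrink $\kappa,\omega$ slightly'' would actually weaken the assertion, since the proposition claims the bounds with the given $\kappa$ and $\omega$; but no shrinking is needed: the strict inequalities (\ref{eigenval-ineq-3})--(\ref{eigenval-ineq-4}) on the compact set $[0,\check\varepsilon]$, together with continuity, yield a uniform spectral margin $\delta>0$, and the $O(\varepsilon)$ perturbations $\bar A_s(\varepsilon)-\bar C(\varepsilon)$ and $\bar A_f(\varepsilon)-C_4(\varepsilon)$ stay inside that margin for $\varepsilon$ small, so the exponents $-\kappa$ and $-\omega$ themselves survive with a uniform constant $a$. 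Trade-off: your route is self-contained and exhibits where each hypothesis enters, while the paper's citation is shorter but opaque.
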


\begin{proof}
The statement of the proposition directly follows from the results of \cite%
{Gl-2003} (Theorem 2.3).
\end{proof}

Now, let us set $m_{1}=n-r+q$, $m_{2}=r-q$ and
\begin{equation}  \label{C_j}
C_{1}(\varepsilon)={\mathcal{A}}_{1}^{T}(\varepsilon),\ \ C_{2}(\varepsilon)=%
{\mathcal{A}}_{3}^{T}(\varepsilon),\ \ C_{3}(\varepsilon)={\mathcal{A}}%
_{2}^{T}(\varepsilon),\ \ C_{4}(\varepsilon)={\mathcal{A}}%
_{4}^{T}(\varepsilon).
\end{equation}

\begin{corollary}
\label{block-est-Phi}Let the assumptions (A1)-(A3), (A5)-(A7) be valid.
Then, there exists a positive number $\bar{\varepsilon}_{0}$, ($\bar{%
\varepsilon}_{0}\le\varepsilon_{0}$), such that for all $\varepsilon\in(0,%
\bar{\varepsilon}_{0}]$ the following inequalities are satisfied:
\begin{equation}  \label{ineq-Phi-1-3}
\big\|\Psi_{i}\big(t,\varepsilon\big)\big\|\le a\exp\big(-\alpha t\big),\ \
\ \ i=1,3,\ \ \ \ 0\le t < +\infty,
\end{equation}
\begin{equation}  \label{ineq-Phi-2}
\big\|\Psi_{2}\big(t,\varepsilon\big)\big\|\le a\varepsilon\exp\big(-\alpha t%
\big),\ \ \ \ 0\le t < +\infty,
\end{equation}
\begin{equation}  \label{ineq-Phi-4}
\big\|\Psi_{4}\big(t,\varepsilon\big)\big\|\le a\Big(\varepsilon\exp\big(%
-\alpha t\big)+\exp\big(-\beta t/\varepsilon\big)\Big),\ \ \ \ 0\le t <
+\infty,
\end{equation}
where $a>0$ is some constant independent of $\varepsilon$.
\end{corollary}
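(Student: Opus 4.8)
The plan is to obtain the Corollary as a direct specialization of Proposition \ref{block-est-Psi}: set $m_{1}=n-r+q$, $m_{2}=r-q$, take the blocks $C_{j}(\varepsilon)$ as in (\ref{C_j}), so that $C(\varepsilon)$ is exactly $\mathcal{A}^{T}(\varepsilon)$ in the block form (\ref{mathcal-A-blocks}), verify the three hypotheses of that proposition, and then read off (\ref{ineq-Psi-1-3})--(\ref{ineq-Psi-4}) with $\omega=\beta$ and $\kappa=\alpha$, which are precisely (\ref{ineq-Phi-1-3})--(\ref{ineq-Phi-4}).

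First I would check the continuity requirement. The blocks ${\mathcal{A}}_{j}(\varepsilon)$ are given explicitly by (\ref{mathcal-A_1})--(\ref{mathcal-A_4}) as polynomials in $\varepsilon$ whose coefficients involve $P_{i}^{*}(\varepsilon)$ and $S_{3}(\varepsilon)=\varepsilon^{2}H_{2}+I_{r-q}$; by Lemma \ref{justif-asymp-sol-Riccati} the blocks $P_{i}^{*}(\varepsilon)$ are well defined on $[0,\varepsilon_{0}]$ and satisfy $\|P_{i}^{*}(\varepsilon)-P_{i0}^{*}\|\le a\varepsilon$, hence are continuous there with $P_{i}^{*}(0)=P_{i0}^{*}$. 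Thus each $C_{j}(\varepsilon)$ is continuous on $[0,\varepsilon_{0}]$, giving the first hypothesis with $\check{\varepsilon}=\varepsilon_{0}$.

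Next I would handle the two spectral hypotheses by computing the limiting blocks at $\varepsilon=0$ and invoking continuity of eigenvalues together with the strictness of (\ref{eigenval-ineq-1}) and (\ref{eigenval-ineq-2}). Setting $\varepsilon=0$ in (\ref{mathcal-A_4}) gives ${\mathcal{A}}_{4}(0)=-S_{3}(0)P_{30}^{*}=-(D_{2})^{1/2}$, so $C_{4}(0)=-(D_{2})^{1/2}$; by (\ref{eigenval-ineq-1}) its spectrum lies strictly to the left of $-\beta$, and since that gap is positive it persists, so $\mathrm{Re}\,\lambda(C_{4}(\varepsilon))<-\beta$ on some $[0,\bar{\varepsilon}_{0}']$, on which in particular $C_{4}(\varepsilon)$ is invertible. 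For the Schur complement $\bar{C}(\varepsilon)=C_{1}(\varepsilon)-C_{2}(\varepsilon)C_{4}^{-1}(\varepsilon)C_{3}(\varepsilon)$, setting $\varepsilon=0$ in (\ref{mathcal-A_1})--(\ref{mathcal-A_3}) yields $C_{1}(0)=(A_{1}-S_{1}P_{10}^{*})^{T}$, $C_{2}(0)=-P_{20}^{*}$, $C_{3}(0)=A_{2}^{T}$, $C_{4}^{-1}(0)=-(D_{2})^{-1/2}$; substituting $P_{20}^{*}=P_{10}^{*}A_{2}(D_{2})^{-1/2}$ and using the symmetry of $S_{1}$, $S_{0}$, $P_{10}^{*}$ together with (\ref{S_1^o}) and (\ref{matr-tildA_10}), a short manipulation collapses this to $\bar{C}(0)=A_{1}^{T}-P_{10}^{*}S_{0}=(A_{1}-S_{0}P_{10}^{*})^{T}={\mathcal{A}}_{0}^{T}$. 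Hence $\bar{C}(0)$ has the same eigenvalues as ${\mathcal{A}}_{0}$, whose real parts lie below $-\alpha$ by (\ref{eigenval-ineq-2}); continuity of $\bar{C}(\cdot)$ on $[0,\bar{\varepsilon}_{0}']$ and the positive gap then give $\mathrm{Re}\,\lambda(\bar{C}(\varepsilon))<-\alpha$ on some $[0,\bar{\varepsilon}_{0}'']$.

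Finally, applying Proposition \ref{block-est-Psi} on $[0,\min\{\bar{\varepsilon}_{0}',\bar{\varepsilon}_{0}''\}]$ produces a constant $\bar{\varepsilon}$, and with $\bar{\varepsilon}_{0}=\min\{\varepsilon_{0},\bar{\varepsilon}_{0}',\bar{\varepsilon}_{0}'',\bar{\varepsilon}\}$ the estimates (\ref{ineq-Psi-1-3})--(\ref{ineq-Psi-4}) become exactly (\ref{ineq-Phi-1-3})--(\ref{ineq-Phi-4}). The only step carrying real content is the identification $\bar{C}(0)={\mathcal{A}}_{0}^{T}$; once the limiting blocks are in hand this is a routine algebraic check, and I do not expect a genuine obstacle — the remainder is continuity-and-compactness bookkeeping.
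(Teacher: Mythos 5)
Your proposal is correct and follows essentially the same route as the paper: specialize Proposition \ref{block-est-Psi} with the blocks (\ref{C_j}), verify continuity via Lemma \ref{justif-asymp-sol-Riccati}, establish the spectral bounds for $C_{4}(\varepsilon)$ and for the Schur complement via the key identification $\bar{C}(0)={\mathcal{A}}_{0}^{T}$, and conclude. The only cosmetic difference is that the paper records quantitative $O(\varepsilon)$ perturbation bounds on $C_{j}(\varepsilon)$ and $\bar{C}(\varepsilon)$ (which follow from (\ref{ineq-tilde(P-P_0)})) rather than appealing to continuity of eigenvalues, but both yield the same persistence of the strict spectral gaps.
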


\begin{proof}
First of all note that, due to the equations (\ref{mathcal-A_1})-(\ref%
{mathcal-A_4}) and Lemma \ref{justif-asymp-sol-Riccati}, the matrices $%
C_{j}(\varepsilon)$, $(j=1,..4)$, given by (\ref{C_j}), are continuous with
respect to $\varepsilon\in[0,\varepsilon_{0}]$.

Using the equations (\ref{eqL-1}), (\ref{tilde-P_320}), (\ref{mathcal-A_1})-(%
\ref{mathcal-A_4}), (\ref{C_j}) and Lemma \ref{justif-asymp-sol-Riccati}, we
obtain
\begin{equation}  \label{C_1-new}
C_{1}(\varepsilon)=A_{1}^{T}-P_{1 0}^{*}S_{1}+\Delta C_{1}(\varepsilon),
\end{equation}
\begin{equation}  \label{C_2-new}
C_{2}(\varepsilon)=-P_{1 0}^{*}A_{2}\big(D_{2}\big)^{-1/2}+\Delta
C_{2}(\varepsilon),
\end{equation}
\begin{equation}  \label{C_3-new}
C_{3}(\varepsilon)=A_{2}^{T}+\Delta C_{3}(\varepsilon),
\end{equation}
\begin{equation}  \label{C_4-new}
C_{4}(\varepsilon)=-\big(D_{2}\big)^{1/2}+\Delta C_{4}(\varepsilon),
\end{equation}
where $\Delta C_{j}(\varepsilon)$, ($j=1,...,4$) are some matrices
satisfying the inequalities
\begin{equation}  \label{Delta-mathcal-ineq}
\big\|\Delta C_{j}(\varepsilon)\big\|\le a\varepsilon,\ \ \ j=1,...,4.\ \ \
\varepsilon\in[0,\varepsilon_{0}],
\end{equation}
$a>0$ is some constant independent of $\varepsilon$.

The equation (\ref{C_4-new}) and the inequalities (\ref{eigenval-ineq-1}), (%
\ref{Delta-mathcal-ineq}) directly yield the existence of a positive number $%
\bar{\varepsilon}_{1}$, ($\bar{\varepsilon}_{1}\le \varepsilon_{0}$), such
that all the eigenvalues $\lambda\Big(C_{4}(\varepsilon)\Big)$ of the matrix
$C_{4}(\varepsilon)$ satisfy the inequality
\begin{equation}  \label{eigenval-ineq-1-1}
\mathrm{Re}\lambda\Big(C_{4}(\varepsilon)\Big)<-\beta,\ \ \ \ \varepsilon\in[%
0,\bar{\varepsilon}_{1}].
\end{equation}
Now, based on (\ref{C_1-new})-(\ref{eigenval-ineq-1-1}) and (\ref%
{matr-tildA_10}), we immediately obtain that the matrix $\bar{C}%
(\varepsilon)=C_{1}(\varepsilon) -
C_{2}(\varepsilon)C_{4}^{-1}(\varepsilon)C_{3}(\varepsilon)$ can be
represented as:
\begin{equation}  \label{bar-C}
\bar{C}(\varepsilon)={\mathcal{A}}_{0}^{T}+\overline{\Delta C}(\varepsilon),
\end{equation}
where $\overline{\Delta C}(\varepsilon)$ is some matrix satisfying the
inequality
\begin{equation}  \label{bar-Delta-C-ineq}
\big\|\overline{\Delta C}(\varepsilon)\big\|\le a\varepsilon,\ \ \ \
\varepsilon\in[0,\bar{\varepsilon}_{1}].
\end{equation}

From the equation (\ref{bar-C}), and the inequalities (\ref{eigenval-ineq-2}%
) and (\ref{bar-Delta-C-ineq}), we directly obtain the existence of a
positive number $\bar{\varepsilon}_{2}$, ($\bar{\varepsilon}_{2}\leq \bar{%
\varepsilon}_{1}$), such that all the eigenvalues $\lambda \Big(\bar{C}%
(\varepsilon )\Big)$ of the matrix $\bar{C}(\varepsilon )$ satisfy the
inequality
\begin{equation}
\mathrm{Re}\lambda \Big(\bar{C}(\varepsilon )\Big)<-\alpha ,\ \ \ \
\varepsilon \in \lbrack 0,\bar{\varepsilon}_{2}].  \label{eigenval-ineq-2-1}
\end{equation}%
Thus, we have shown that the blocks (\ref{C_j}) of the matrix $C(\varepsilon
)$ satisfy all the conditions of Proposition \ref{block-est-Psi}, which
completes the proof of the corollary.
\end{proof}

\subsection{Main part of the proof}

Let $\Delta_{1}(t,\varepsilon)$ and $\Delta_{2}(t,\varepsilon)$ be vectors,
defined as follows:
\begin{equation}  \label{Delta_1,2}
\Delta_{1}(t,\varepsilon)\overset{\triangle}{=}h_{1}(t,\varepsilon)-h_{1
0}(t),\ \ \ \Delta_{2}(t,\varepsilon)\overset{\triangle}{=}%
h_{2}(t,\varepsilon)-h_{2 0}(t),\ \ \ t\ge 0.
\end{equation}

Substitution of (\ref{Delta_1,2}) into (\ref{eq-h_1})-(\ref{termin-cond}),
and using (\ref{eq-h_10})-(\ref{eq-h_20}), (\ref{term-cond-h_10}) and (\ref%
{term-cond-h_20}) yield the problem for $\Delta_{1}(t,\varepsilon)$ and $%
\Delta_{2}(t,\varepsilon)$
\begin{equation}  \label{eq-Del_1}
\frac{d\Delta_{1}(t,\varepsilon)}{dt}=-{\mathcal{A}}_{1}^{T}(\varepsilon)%
\Delta_{1}(t,\varepsilon)-{\mathcal{A}}_{3}^{T}(\varepsilon)\Delta_{2}(t,%
\varepsilon)+ \Gamma_{1}(t,\varepsilon),
\end{equation}
\begin{equation}  \label{eq-Del_2}
\varepsilon\frac{d\Delta_{2}(t,\varepsilon)}{dt}=-{\mathcal{A}}%
_{2}^{T}(\varepsilon)\Delta_{1}(t,\varepsilon) - {\mathcal{A}}%
_{4}^{T}(\varepsilon)\Delta_{2}(t,\varepsilon)+ \Gamma_{2}(t,\varepsilon),
\end{equation}
\begin{equation}  \label{termin-cond-Delt}
\Delta_{1}(+\infty,\varepsilon)=0,\ \ \ \ \
\Delta_{2}(+\infty,\varepsilon)=0,
\end{equation}
where
\begin{eqnarray}
\Gamma_{1}(t,\varepsilon) = \big({\mathcal{A}}_{1}^{T}(0)-{\mathcal{A}}%
_{1}^{T}(\varepsilon)\big)h_{1 0}(t)+\big({\mathcal{A}}_{3}^{T}(0)-{\mathcal{%
A}}_{3}^{T}(\varepsilon)\big)h_{2 0}(t)  \notag \\
+ \big(P_{1 0}^{*}-P_{1}^{*}(\varepsilon)\big)f_{1}(t) -\varepsilon
P_{2}^{*}(\varepsilon)f_{2}(t),  \label{Gamma_1}
\end{eqnarray}
\begin{eqnarray}
\Gamma_{2}(t,\varepsilon) = \big({\mathcal{A}}_{2}^{T}(0)-{\mathcal{A}}%
_{2}^{T}(\varepsilon)\big)h_{1 0}(t)+\big({\mathcal{A}}_{4}^{T}(0)-{\mathcal{%
A}}_{4}^{T}(\varepsilon)\big)h_{2 0}(t)  \notag \\
- \varepsilon\big(P_{2}^{*}(\varepsilon)\big)^{T}f_{1}(t) -\varepsilon
P_{3}^{*}(\varepsilon)f_{2}(t).  \label{Gamma_2}
\end{eqnarray}

Using the equations (\ref{eqL-1})-(\ref{eqH1H2}), (\ref{mathcal-A_1})-(\ref%
{mathcal-A_4}), Lemma \ref{justif-asymp-sol-Riccati} (the inequalities (\ref%
{ineq-tilde(P-P_0)})), and the inequalities (\ref{f-ineq}), (\ref{ineq-h_10}%
), (\ref{ineq-h_20}), we directly have
\begin{equation}  \label{ineq-Gamma_1,2}
\big\|\Gamma_{i}(t,\varepsilon)\big\|\le a\varepsilon\exp(-\gamma t),\ \ \ \
i=1,2,\ \ \ \ t\ge 0,\ \ \ \varepsilon \in (0,\tilde{\varepsilon}_{0}],
\end{equation}
where $\tilde{\varepsilon}_{0}\overset{\triangle}{=}\min\{1,\varepsilon_{0}%
\} $; $a>0$ is some constant independent of $\varepsilon$.

Using the equations (\ref{auxil-eq-Psi})-(\ref{block-Psi}) and (\ref{C_j}),
we can represent the solution of the problem (\ref{eq-Del_1})-(\ref%
{termin-cond-Delt}) as follows:
\begin{equation}  \label{Delt-1}
\Delta_{1}(t,\varepsilon)=\int_{0}^{+\infty}\Big(\Psi_{1}(\sigma,%
\varepsilon)\Gamma_{1}(\sigma+t,\varepsilon) +
(1/\varepsilon)\Psi_{2}(\sigma,\varepsilon)\Gamma_{2}(\sigma+t,\varepsilon)%
\Big)d\sigma,\ t\ge 0,
\end{equation}
\begin{equation}  \label{Delt-2}
\Delta_{2}(t,\varepsilon)=\int_{0}^{+\infty}\Big(\Psi_{3}(\sigma,%
\varepsilon)\Gamma_{1}(\sigma+t,\varepsilon) +
(1/\varepsilon)\Psi_{4}(\sigma,\varepsilon)\Gamma_{2}(\sigma+t,\varepsilon)%
\Big)d\sigma,\ t\ge 0.
\end{equation}
These equations, along with the inequalities (\ref{ineq-Phi-1-3})-(\ref%
{ineq-Phi-4}) and (\ref{ineq-Gamma_1,2}), directly yield the inequalities
\begin{equation}  \label{ineq-Delta}
\big\|\Delta_{i}(t,\varepsilon)\big\|\le c\varepsilon\exp(-\mu t),\ \ \ \
i=1,2,\ \ \ t\ge 0,\ \ \ \varepsilon\in(0,\varepsilon_{1}],
\end{equation}
where $\varepsilon_{1}=\min\{\bar{\varepsilon}_{0} , \tilde{\varepsilon}%
_{0}\}$, $c>0$ is some constant independent of $\varepsilon$.

The equation (\ref{Delta_1,2}) and the inequalities (\ref{ineq-Delta})
immediately imply the inequalities (\ref{ineq-h_1}). This completes the
proof of the lemma.

\section{Appendix B: Proof of Lemma \protect\ref{admiss-u_eps_1,2}}

\label{App-B} We prove the lemma for $u_{\varepsilon,1}(z,t)$. The
admissibility of $u_{\varepsilon,2}(z,t)$ is shown similarly.

Substitution of $u_{\varepsilon,1}\left(z,t\right)$ into (\ref{new-eq}), and
using the equations (\ref{S_u}), (\ref{S_u-new-form})-(\ref{eqH1H2}), (\ref%
{A-block-form}), (\ref{f-block}), (\ref{z_0-blocks}) and (\ref{z-block})
yield the following initial-value problem in the interval $t\in[0,+\infty)$:
\begin{equation}  \label{eq-x-closed}
\frac{dx(t)}{dt}={\mathcal{A}}_{1 0}(\varepsilon)x(t)+{\mathcal{A}}_{2
0}(\varepsilon)y(t)-S_{1}h_{1 0}(t)-\varepsilon S_{2}h_{2 0}(t)+f_{1}(t),\ \
\ x(0)=x_{0},
\end{equation}
\begin{eqnarray}
\varepsilon\frac{dy(t)}{dt}={\mathcal{A}}_{3 0}(\varepsilon)x(t) +{\mathcal{A%
}}_{4 0}(\varepsilon)y(t)  \notag \\
-\varepsilon S_{2}^{T}h_{1 0}(t)-S_{3}(\varepsilon)h_{2 0}(t)+\varepsilon
f_{2}(t),\ \ \ y(0)=y_{0},  \label{eq-y-closed}
\end{eqnarray}
where
\begin{equation}  \label{mathcal-A_10}
{\mathcal{A}}_{1 0}(\varepsilon)=A_{1}- S_{1}P^{*}_{1 0}- \varepsilon S_{2}%
\big(P^{*}_{2 0}\big)^{T},
\end{equation}
\begin{equation}  \label{mathcal-A_20}
{\mathcal{A}}_{2 0}(\varepsilon)=A_{2}-\varepsilon S_{1}P^{*}_{2 0}-
\varepsilon S_{2}P^{*}_{3 0},
\end{equation}
\begin{equation}  \label{mathcal-A_30}
{\mathcal{A}}_{3 0}(\varepsilon)=\varepsilon A_{3}- \varepsilon
S_{2}P^{*}_{1 0}-S_{3}(\varepsilon)\big(P^{*}_{2 0}\big)^{T},
\end{equation}
\begin{equation}  \label{mathcal-A_40}
{\mathcal{A}}_{4 0}(\varepsilon)=\varepsilon
A_{4}-\varepsilon^{2}S_{2}^{T}P^{*}_{2 0}- S_{3}(\varepsilon)P^{*}_{3 0}.
\end{equation}

For any $\varepsilon\in(0,\varepsilon_{0}]$, the problem (\ref{eq-x-closed}%
)-(\ref{eq-y-closed}) has the unique locally absolutely continuous solution $%
z(t,\varepsilon)=\{x(t,\varepsilon) , y(t,\varepsilon)\}$, $t\in [0,+\infty)$%
.

Using the inequalities (\ref{f-ineq}), (\ref{ineq-h_10}), (\ref{ineq-h_20}),
one can obtain (similarly to the inequalities (\ref{ineq-Delta})) the
following inequalities:
\begin{equation}  \label{ineq-x-y}
\|x(t,\varepsilon)\|\le a\exp(-\mu t),\ \ \ \ \|y(t,\varepsilon)\|\le
a\exp(-\mu t),\ \ \ t\ge 0,\ \ \ \ \varepsilon\in(0,\check{{\varepsilon}}%
_{1}],
\end{equation}
where $\check{{\varepsilon}}_{1}>0$, ($\check{{\varepsilon}}_{1}\le
\varepsilon_{2}$) is some constant; $a>0$ is some constant independent of $%
\varepsilon$; the constant $\mu$ is given by (\ref{mu}).

Due to (\ref{ineq-x-y}), $z(t,\varepsilon)\in L^{2}[0,+\infty; E^{n}]$ for
all $\varepsilon\in(0,\check{{\varepsilon}}_{1}]$. The latter inclusion,
along with the equation (\ref{subopt-min-str-1}), yields the inclusion $%
u_{\varepsilon,1}\big(z(t,\varepsilon),t\big)\in L^{2}[0,+\infty; E^{r}]$
for all $\varepsilon\in(0,\check{{\varepsilon}}_{1}]$. Thus, the
state-feedback control $u_{\varepsilon,1}(z,t)$ satisfies all the conditions
of the admissibility in the OOCP, which completes the proof of the lemma.

\section{Appendix C: Proof of Theorem \protect\ref{minim-sequance}}

We prove the theorem for the sequence $\{u_{\varepsilon_{k},1}(z,t)\}$, ($%
k=1,2,...$). The statement of the theorem with respect to the sequence $%
\{u_{\varepsilon_{k},2}(z,t)\}$, ($k=1,2,...$) is proven similarly.

\subsection{Auxiliary results}

Similarly to proof of Lemma \ref{admiss-u_eps_1,2} (see Section \ref{App-B}%
), the substitution of $u_{\varepsilon,1}\left(z,t\right)$ into (\ref{new-eq}%
) yields the initial-value problem (\ref{eq-x-closed})-(\ref{eq-y-closed})
in the interval $t\in[0,+\infty)$. Let us construct the zero-order
asymptotic solution to this problem. Following the Boundary Function Method
\cite{Vasil'eva}, we look for this asymptotic solution in the form
\begin{equation}  \label{asymp-sol}
x_{0}^{\mathrm{as}}(t,\varepsilon)=x_{0}^{o}(t)+x_{0}^{b}(\tau),\ \ \ \
y_{0}^{\mathrm{as}}(t,\varepsilon)=y_{0}^{o}(t)+y_{0}^{b}(\tau),\ \ \
\tau=t/\varepsilon,
\end{equation}
where $\{x_{0}^{o}(t) , y_{0}^{o}(t)\}$ is the so-called outer solution, $%
x_{0}^{b}(\tau)$ and $y_{0}^{b}(\tau)$ are the boundary correction terms.

Equations and conditions for obtaining the asymptotic solution (\ref%
{asymp-sol}) are derived by substitution of $x_{0}^{\mathrm{as}%
}(t,\varepsilon)$ and $y_{0}^{\mathrm{as}}(t,\varepsilon)$ into the problem (%
\ref{eq-x-closed})-(\ref{eq-y-closed}) instead of $x(t)$ and $y(t)$,
respectively, and equating the coefficients for the same powers of $%
\varepsilon$ on both sides of the resulting equations, separately for the
outer solution terms and the boundary correction terms.

\subsubsection{Obtaining $x_{0}^{b}(\protect\tau)$}

For obtaining this term, we derive the equation
\begin{equation}  \label{eq-x_0^b}
\frac{dx_{0}^{b}(\tau)}{d\tau}=0,\ \ \ \ \tau\ge 0.
\end{equation}
Due to the Boundary Function Method, we require that $x_{0}^{b}(\tau)%
\rightarrow 0$ for $\tau\rightarrow +\infty$. Subject to this requirement,
the equation (\ref{eq-x_0^b}) yields the unique solution
\begin{equation}  \label{x_0^b}
x_{0}^{b}(\tau)\equiv 0,\ \ \ \ \tau\ge 0.
\end{equation}

\subsubsection{Obtaining the outer solution}

Using the equations (\ref{eqL-1})-(\ref{eqH1H2}), (\ref{tilde-P_320}), (\ref%
{h_20}) and (\ref{mathcal-A_10})-(\ref{mathcal-A_40}), we have the system
for $\{x_{0}^{o}(t) , y_{0}^{o}(t)\}$
\begin{equation}  \label{eq-outer-x}
\frac{dx_{0}^{o}(t)}{dt}=\big(A_{1}-S_{1}P_{1 0}^{*}\big)%
x_{0}^{o}(t)+A_{2}y_{0}^{o}(t)-S_{1}h_{1 0}(t)+f_{1}(t),
\end{equation}
\begin{equation}  \label{eq-outer-y}
0=-\big(D_{2}\big)^{-1/2}A_{2}^{T}P_{1 0}^{*}x_{0}^{o}(t)-\big(D_{2}\big)%
^{1/2}y_{0}^{o}(t)-\big(D_{2}\big)^{-1/2}A_{2}^{T}h_{1 0}(t).
\end{equation}
Solving the equation (\ref{eq-outer-y}) with respect to $y_{0}^{o}(t)$, we
obtain
\begin{equation}  \label{y_0^o}
y_{0}^{o}(t)=-D_{2}^{-1}A_{2}^{T}P_{1
0}^{*}x_{0}^{o}(t)-D_{2}^{-1}A_{2}^{T}h_{1 0}(t).
\end{equation}
Then, substituting (\ref{y_0^o}) into (\ref{eq-outer-x}), and using (\ref%
{S_1^o}) and (\ref{matr-tildA_10}) yields the differential equation with
respect to $x_{0}^{o}(t)$
\begin{equation}  \label{eq-outer-x-2}
\frac{dx_{0}^{o}(t)}{dt}={\mathcal{A}}_{0}x_{0}^{o}(t)-S_{0}h_{1
0}(t)+f_{1}(t).
\end{equation}
Moreover, using (\ref{x_0^b}), we directly have the initial condition for
this equation
\begin{equation}  \label{in-cond-x_0^o}
x_{0}^{o}(0)=x_{0}.
\end{equation}
The solution of the problem (\ref{eq-outer-x-2})-(\ref{in-cond-x_0^o}) is
\begin{eqnarray}  \label{x_0^o}
x_{0}^{o}(t)=\exp\big({\mathcal{A}}_{0}t\big)x_{0}  \notag \\
-\int_{0}^{t}\exp\big({\mathcal{A}}_{0}(t-\sigma)\big)\Big(S_{0}h_{1
0}(\sigma) -f_{1}(\sigma)\Big)d\sigma,\ \ \ \ t\ge 0.
\end{eqnarray}
Due to (\ref{f-ineq}), (\ref{eigenval-ineq-2}), (\ref{ineq-h_10}) and (\ref%
{mu}), this solution satisfies the inequality
\begin{equation}  \label{x_0^O-ineq}
\big\|x_{0}^{o}(t)\big\|\le a\exp(-\mu t),\ \ \ \ t\ge 0,
\end{equation}
where $a>0$ is some constant.

The equations (\ref{mu}), (\ref{y_0^o}), and the inequalities (\ref%
{ineq-h_10}), (\ref{x_0^O-ineq}) yield
\begin{equation}  \label{y_0^O-ineq}
\big\|y_{0}^{o}(t)\big\|\le a\exp(-\mu t),\ \ \ \ t\ge 0,
\end{equation}
where $a>0$ is some constant.

\subsubsection{Obtaining $y_{0}^{b}(\protect\tau)$}

Using (\ref{eqL-1})-(\ref{eqH1H2}), (\ref{tilde-P_320}), (\ref{mathcal-A_40}%
), we have the following equation for this boundary correction term:
\begin{equation}  \label{eq-y_0^b}
\frac{dy_{0}^{b}(\tau)}{d\tau}=-\big(D_{2}\big)^{1/2}y_{0}^{b}(\tau),\ \ \ \
\tau\ge 0.
\end{equation}
Moreover, the initial condition for this equation is
\begin{equation}  \label{in-cond-y_0^b}
y_{0}^{b}(0)=y_{0}-y_{0}^{o}(0).
\end{equation}
The solution of the problem (\ref{eq-y_0^b})-(\ref{in-cond-y_0^b}) has the
form
\begin{equation}  \label{y_0^b}
y_{0}^{b}(\tau)=\exp\Big(-\big(D_{2}\big)^{1/2}t\Big)\big(y_{0}-y_{0}^{o}(0)%
\big),\ \ \ \ \tau\ge 0.
\end{equation}
Due to (\ref{eigenval-ineq-1}), this solution satisfies the inequality
\begin{equation}  \label{y_0^b-ineq}
\big\|y_{0}^{b}(\tau)\big\|\le a\exp(-\beta\tau),\ \ \ \ \tau\ge 0,
\end{equation}
where $a>0$ is some constant.

Thus, we have completed the formal construction of the zero-order asymptotic
solution to the problem (\ref{eq-x-closed})-(\ref{eq-y-closed}).

\begin{lemma}
\label{justif-asymp-x^as-y^as}Let the assumptions (A1)-(A7) be valid. Then,
there exists a positive constant $\tilde{\varepsilon}_{1}$, ($\tilde{%
\varepsilon}_{1}\le\varepsilon_{2}$), such that for all $\varepsilon\in(0,%
\tilde{\varepsilon}_{1}]$ the solution $\{x(t,\varepsilon) ,
y(t,\varepsilon)\}$ of the initial-value problem (\ref{eq-x-closed})-(\ref%
{eq-y-closed}) satisfies the inequalities
\begin{equation}  \label{asymp-x-ineq}
\big\|x(t,\varepsilon)-x_{0}^{o}(t)\big\|\le c\varepsilon\exp(-\mu t),\ \ \
\ \ t\ge 0,
\end{equation}
\begin{equation}  \label{asymp-y-ineq}
\big\|y(t,\varepsilon)-y_{0}^{o}(t)-y_{0}^{b}(t/\varepsilon)\big\|\le
c\varepsilon\exp(-\mu t),\ \ t\ge 0,
\end{equation}
where $c>0$ is some constant independent of $\varepsilon$.
\end{lemma}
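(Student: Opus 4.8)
The plan is to carry out a singular-perturbation justification in the spirit of Appendix~A. First I would introduce the deviations $\delta_{1}(t,\varepsilon)\overset{\triangle}{=}x(t,\varepsilon)-x_{0}^{o}(t)$ and $\delta_{2}(t,\varepsilon)\overset{\triangle}{=}y(t,\varepsilon)-y_{0}^{o}(t)-y_{0}^{b}(t/\varepsilon)$, where $\{x(t,\varepsilon),y(t,\varepsilon)\}$ is the solution of (\ref{eq-x-closed})--(\ref{eq-y-closed}). Substituting $x=x_{0}^{o}+\delta_{1}$, $y=y_{0}^{o}+y_{0}^{b}(t/\varepsilon)+\delta_{2}$ into (\ref{eq-x-closed})--(\ref{eq-y-closed}) and removing, via the outer-system equations (\ref{eq-outer-x})--(\ref{eq-outer-y}), the boundary-layer equation (\ref{eq-y_0^b}) written as $\varepsilon\,dy_{0}^{b}(t/\varepsilon)/dt=\mathcal{A}_{40}(0)y_{0}^{b}(t/\varepsilon)$, and the forcing $-S_{1}h_{10}+f_{1}$ common to (\ref{eq-x-closed}) and (\ref{eq-outer-x}), one obtains for $\{\delta_{1},\delta_{2}\}$ a linear singularly perturbed initial-value problem
\begin{eqnarray*}
\frac{d\delta_{1}(t,\varepsilon)}{dt} &=& \mathcal{A}_{10}(\varepsilon)\delta_{1}(t,\varepsilon)+\mathcal{A}_{20}(\varepsilon)\delta_{2}(t,\varepsilon)+g_{1}(t,\varepsilon),\\
\varepsilon\frac{d\delta_{2}(t,\varepsilon)}{dt} &=& \mathcal{A}_{30}(\varepsilon)\delta_{1}(t,\varepsilon)+\mathcal{A}_{40}(\varepsilon)\delta_{2}(t,\varepsilon)+g_{2}(t,\varepsilon),\\
\delta_{1}(0,\varepsilon) &=& 0,\qquad\delta_{2}(0,\varepsilon)=0,
\end{eqnarray*}
the initial data being homogeneous because $x_{0}^{o}(0)=x_{0}$ (see (\ref{in-cond-x_0^o})) and $y_{0}^{o}(0)+y_{0}^{b}(0)=y_{0}$ (see (\ref{in-cond-y_0^b})). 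A point I would want to verify carefully already here is that the $O(1)$ fast term $\mathcal{A}_{40}(0)y_{0}^{b}$ coming from the left-hand side cancels the one produced by the substitution of $\mathcal{A}_{40}(\varepsilon)y$, so that $g_{2}$ carries no $O(1)$ boundary-layer contribution.

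Next I would estimate the forcing. Using (\ref{mathcal-A_10})--(\ref{mathcal-A_40}), (\ref{eqL-1}) one has $\mathcal{A}_{i0}(\varepsilon)-\mathcal{A}_{i0}(0)=O(\varepsilon)$ ($i=1,\dots,4$) and $S_{3}(\varepsilon)-S_{3}(0)=\varepsilon^{2}H_{2}$, while rearrangement gives $g_{1}=\big(\mathcal{A}_{10}(\varepsilon)-\mathcal{A}_{10}(0)\big)x_{0}^{o}+\big(\mathcal{A}_{20}(\varepsilon)-\mathcal{A}_{20}(0)\big)y_{0}^{o}+\mathcal{A}_{20}(\varepsilon)y_{0}^{b}(t/\varepsilon)-\varepsilon S_{2}h_{20}$ and, similarly, $g_{2}=\big(\mathcal{A}_{30}(\varepsilon)-\mathcal{A}_{30}(0)\big)x_{0}^{o}+\big(\mathcal{A}_{40}(\varepsilon)-\mathcal{A}_{40}(0)\big)\big(y_{0}^{o}+y_{0}^{b}(t/\varepsilon)\big)-\big(S_{3}(\varepsilon)-S_{3}(0)\big)h_{20}-\varepsilon S_{2}^{T}h_{10}+\varepsilon f_{2}-\varepsilon\,dy_{0}^{o}/dt$. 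Feeding in the exponential bounds (\ref{f-ineq}), (\ref{ineq-h_10}), (\ref{ineq-h_20}), (\ref{x_0^O-ineq}), (\ref{y_0^O-ineq}), (\ref{y_0^b-ineq}) together with $\big\|dy_{0}^{o}(t)/dt\big\|\le a\exp(-\mu t)$, which I would derive from (\ref{y_0^o}), (\ref{eq-outer-x-2}), (\ref{dif-eq-h_10}), I expect to get, for $t\ge0$ and all small $\varepsilon$,
\begin{equation*}
\big\|g_{1}(t,\varepsilon)\big\|\le c\big(\varepsilon\exp(-\mu t)+\exp(-\beta t/\varepsilon)\big),\qquad\big\|g_{2}(t,\varepsilon)\big\|\le c\varepsilon\big(\exp(-\mu t)+\exp(-\beta t/\varepsilon)\big),
\end{equation*}
with $c>0$ independent of $\varepsilon$.

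Then I would propagate these bounds. The coefficient matrix of the error system is $t$-independent and polynomial (hence continuous) in $\varepsilon$; at $\varepsilon=0$ one has $\mathcal{A}_{40}(0)=-S_{3}(0)P_{30}^{*}=-\big(D_{2}\big)^{1/2}$, whose eigenvalues satisfy (\ref{eigenval-ineq-1}), and, using $P_{20}^{*}=P_{10}^{*}A_{2}\big(D_{2}\big)^{-1/2}$, (\ref{S_1^o}), (\ref{matr-tildA_10}), a short calculation gives
\begin{equation*}
\mathcal{A}_{10}(0)-\mathcal{A}_{20}(0)\mathcal{A}_{40}^{-1}(0)\mathcal{A}_{30}(0)=A_{1}-\big(S_{1}+A_{2}D_{2}^{-1}A_{2}^{T}\big)P_{10}^{*}=\mathcal{A}_{0},
\end{equation*}
whose eigenvalues satisfy (\ref{eigenval-ineq-2}). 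Thus, after shrinking $\varepsilon_{0}$ if necessary, this matrix fulfils all hypotheses of Proposition~\ref{block-est-Psi} with $m_{1}=n-r+q$, $m_{2}=r-q$, and the fundamental matrix $\Phi(t,\varepsilon)$ ($\Phi(0,\varepsilon)=I_{n}$) of the homogeneous part of the error system, partitioned into blocks $\Phi_{1},\dots,\Phi_{4}$ as in (\ref{block-Psi}), satisfies $\|\Phi_{i}(t,\varepsilon)\|\le a\exp(-\alpha t)$ ($i=1,3$), $\|\Phi_{2}(t,\varepsilon)\|\le a\varepsilon\exp(-\alpha t)$, $\|\Phi_{4}(t,\varepsilon)\|\le a\big(\varepsilon\exp(-\alpha t)+\exp(-\beta t/\varepsilon)\big)$. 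Writing $\{\delta_{1},\delta_{2}\}$ by variation of constants as the convolution of $\Phi$ with $\mathrm{col}\big(g_{1},(1/\varepsilon)g_{2}\big)$ (the $1/\varepsilon$ coming from dividing the $\delta_{2}$-equation by $\varepsilon$) and inserting the block and forcing bounds — estimating the arising convolution integrals, such as $\int_{0}^{t}\exp(-\alpha(t-\sigma))\exp(-\beta\sigma/\varepsilon)\,d\sigma$ and $\int_{0}^{t}\exp(-\beta(t-\sigma)/\varepsilon)\exp(-\beta\sigma/\varepsilon)\,d\sigma$, exactly as in the proof of Lemma~\ref{asympt-h} — should give (\ref{asymp-x-ineq}) and (\ref{asymp-y-ineq}).

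The hardest part, I think, will be the bookkeeping of the forcing: one must make sure that, once the outer and boundary-layer equations are subtracted, each surviving term of $g_{1}$ and $g_{2}$ is either $O(\varepsilon)$ uniformly on the slow scale or an $O(1)$ term confined to the $O(\varepsilon)$-wide initial layer (in particular, the $\mathcal{A}_{40}(0)y_{0}^{b}$ mismatch in the fast equation must cancel), since only then is the $1/\varepsilon$ weighting of $g_{2}$ compensated after integration against $\Phi$. The other structurally essential step is the identification of the reduced matrix of the closed-loop coefficient matrix with $\mathcal{A}_{0}$, which is precisely what makes Proposition~\ref{block-est-Psi} applicable; granted these two ingredients, the remaining estimates are routine.
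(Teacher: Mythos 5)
Your proposal is correct and follows exactly the route the paper intends: the paper's entire proof is the one line ``The lemma is proven similarly to Lemma \ref{asympt-h}'', and your argument is precisely that analogy carried out in full — error system for $\delta_{1},\delta_{2}$ with zero initial data, verification that the closed-loop block matrix satisfies the hypotheses of Proposition \ref{block-est-Psi} with fast matrix $-\big(D_{2}\big)^{1/2}$ and reduced matrix $\mathcal{A}_{0}$, and variation of constants with the blockwise estimates. The two points you flag as delicate (cancellation of the $O(1)$ boundary-layer term in the fast equation, and the $O(\varepsilon)$ outcome of convolving $\exp(-\beta\sigma/\varepsilon)$ against the slow blocks) are indeed the only adaptations needed relative to Appendix A, and you handle both correctly.
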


\begin{proof}
The lemma is proven similarly to Lemma \ref{asympt-h}.
\end{proof}

Let us denote
\begin{eqnarray}  \label{J_0}
J_{0}\overset{\triangle}{=}\int_{0}^{+\infty}\Big[\big(x_{0}^{o}(t)\big)%
^{T}D_{1}x_{0}^{o}(t)  \notag \\
+\Big(P_{1 0}^{*}x_{0}^{o}(t)+h_{1 0}(t)\Big)^{T}S_{0}\Big(P_{1
0}^{*}x_{0}^{o}(t)+h_{1 0}(t)\Big)\Big]dt.
\end{eqnarray}

\begin{corollary}
\label{justif-asymp-J}Let the assumptions (A1)-(A7) be valid. Then, for all $%
\varepsilon\in(0,\tilde{\varepsilon}_{1}]$ the following inequality is
satisfied:
\begin{equation}  \label{ineq-J-asympt}
\big|J\big(u_{\varepsilon,1}(z,t)\big)-J_{0}\big|\le c\varepsilon.
\end{equation}
\end{corollary}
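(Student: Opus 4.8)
The plan is to evaluate $J\big(u_{\varepsilon,1}(z,t)\big)$ along the closed-loop trajectory and then replace the state and the control by the zero-order asymptotics supplied by Lemma~\ref{justif-asymp-x^as-y^as}. Write $\xi(t)\overset{\triangle}{=}P_{10}^{*}x_{0}^{o}(t)+h_{10}(t)$, $t\ge 0$; by (\ref{x_0^O-ineq}), (\ref{ineq-h_10}) this function satisfies $\|\xi(t)\|\le a\exp(-\mu t)$. Using the block structures $D=\mathrm{diag}(D_{1},D_{2})$ (see (\ref{new-matr-D})), $G=\mathrm{diag}\big(\widetilde{G},O_{(r-q)\times(r-q)}\big)$ (see (\ref{mathcal-G_u}), (\ref{eqH1H2})) and $z=\mathrm{col}(x,y)$, I would first rewrite
\[
J\big(u_{\varepsilon,1}(z,t)\big)=\int_{0}^{+\infty}\Big[x^{T}(t,\varepsilon)D_{1}x(t,\varepsilon)+y^{T}(t,\varepsilon)D_{2}y(t,\varepsilon)+v^{T}(t,\varepsilon)\widetilde{G}\,v(t,\varepsilon)\Big]dt,
\]
where $\{x(t,\varepsilon),y(t,\varepsilon)\}$ is the solution of the closed-loop system (\ref{eq-x-closed})-(\ref{eq-y-closed}) and $v(t,\varepsilon)$ denotes the upper ($q$-dimensional) block of $u_{\varepsilon,1}\big(z(t,\varepsilon),t\big)$, i.e. the first line of (\ref{derivation}) evaluated along the trajectory.

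Next I would simplify $v(t,\varepsilon)$. Since $K_{1}(\varepsilon)=H_{3}P_{10}^{*}+\varepsilon H_{1}\big(P_{20}^{*}\big)^{T}$ (see (\ref{K})) and the matrices $K_{1}(\varepsilon)$, $K_{2}(\varepsilon)$, $H_{1}$, $H_{3}$ are bounded uniformly for $\varepsilon\in(0,\tilde{\varepsilon}_{1}]$, the exponential bounds (\ref{ineq-x-y}), (\ref{ineq-h_20}) for $\|x(t,\varepsilon)\|$, $\|y(t,\varepsilon)\|$, $\|h_{20}(t)\|$ together with the estimate (\ref{asymp-x-ineq}) for $x(t,\varepsilon)-x_{0}^{o}(t)$ give
\[
v(t,\varepsilon)=-H_{3}\,\xi(t)+O\big(\varepsilon\exp(-\mu t)\big),\qquad t\ge 0,\ \ \varepsilon\in(0,\tilde{\varepsilon}_{1}].
\]
Moreover, from $H_{3}=\widetilde{G}^{-1}\widetilde{B}^{T}$ (see (\ref{eqH1H2}), (\ref{B^o})) one gets $H_{3}^{T}\widetilde{G}H_{3}=\widetilde{B}\widetilde{G}^{-1}\widetilde{B}^{T}=S_{1}$ (cf. (\ref{eqL-1})), so that $v^{T}(t,\varepsilon)\widetilde{G}v(t,\varepsilon)=\xi^{T}(t)S_{1}\xi(t)+O\big(\varepsilon\exp(-2\mu t)\big)$.

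Then I would insert the asymptotics $x(t,\varepsilon)=x_{0}^{o}(t)+O(\varepsilon\exp(-\mu t))$ and $y(t,\varepsilon)=y_{0}^{o}(t)+y_{0}^{b}(t/\varepsilon)+O(\varepsilon\exp(-\mu t))$ (Lemma~\ref{justif-asymp-x^as-y^as}) into the remaining quadratic forms. Because $x_{0}^{o}$, $y_{0}^{o}$ and $\xi$ decay like $\exp(-\mu t)$ (see (\ref{x_0^O-ineq}), (\ref{y_0^O-ineq})) and the coefficient matrices are constant, every cross term of a quadratic form with an $O(\varepsilon\exp(-\mu t))$ remainder integrates to $O(\varepsilon)$, and every squared remainder to $O(\varepsilon^{2})$. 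The only contribution needing separate handling is the boundary-layer part of the $y$-term: the cross term $\int_{0}^{+\infty}\big(y_{0}^{o}(t)\big)^{T}D_{2}\,y_{0}^{b}(t/\varepsilon)\,dt$ is $O(\varepsilon)$ since its integrand is dominated by $a^{2}\|D_{2}\|\exp(-\mu t)\exp(-\beta t/\varepsilon)$, while $\int_{0}^{+\infty}\big(y_{0}^{b}(t/\varepsilon)\big)^{T}D_{2}\,y_{0}^{b}(t/\varepsilon)\,dt=\varepsilon\int_{0}^{+\infty}\big(y_{0}^{b}(\tau)\big)^{T}D_{2}\,y_{0}^{b}(\tau)\,d\tau=O(\varepsilon)$ after the substitution $\tau=t/\varepsilon$ and the bound (\ref{y_0^b-ineq}). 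I expect this boundary-layer bookkeeping, together with verifying that all the $O(\cdot)$-constants are independent of $\varepsilon$, to be the only delicate point; the rest is a routine expansion. This gives
\[
J\big(u_{\varepsilon,1}(z,t)\big)=\int_{0}^{+\infty}\Big[\big(x_{0}^{o}(t)\big)^{T}D_{1}x_{0}^{o}(t)+\big(y_{0}^{o}(t)\big)^{T}D_{2}y_{0}^{o}(t)+\xi^{T}(t)S_{1}\xi(t)\Big]dt+O(\varepsilon).
\]

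Finally I would identify the main term with $J_{0}$. Equation (\ref{y_0^o}) yields $y_{0}^{o}(t)=-D_{2}^{-1}A_{2}^{T}\xi(t)$, hence $\big(y_{0}^{o}(t)\big)^{T}D_{2}y_{0}^{o}(t)=\xi^{T}(t)A_{2}D_{2}^{-1}A_{2}^{T}\xi(t)$; adding this to $\xi^{T}(t)S_{1}\xi(t)$ and invoking $S_{0}=A_{2}D_{2}^{-1}A_{2}^{T}+S_{1}$ (see (\ref{S_1^o})) turns the integrand into $\big(x_{0}^{o}\big)^{T}D_{1}x_{0}^{o}+\xi^{T}S_{0}\xi$, which is exactly the integrand of $J_{0}$ in (\ref{J_0}) once $\xi$ is expanded back to $P_{10}^{*}x_{0}^{o}+h_{10}$. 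All these integrals converge by the exponential bounds above, so $\big|J\big(u_{\varepsilon,1}(z,t)\big)-J_{0}\big|\le c\varepsilon$ with $c$ independent of $\varepsilon$, which is the asserted inequality (\ref{ineq-J-asympt}).
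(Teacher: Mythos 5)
Your proposal is correct and follows essentially the same route as the paper's own proof: substitute the closed-loop trajectory into the block-decomposed cost functional, replace $x$, $y$ and the regular control block by their zero-order asymptotics from Lemma~\ref{justif-asymp-x^as-y^as} (absorbing the boundary-layer and remainder contributions into an $O(\varepsilon)$ term), and then identify the leading integrand with that of $J_{0}$ via $H_{3}^{T}\widetilde{G}H_{3}=S_{1}$, the relation (\ref{y_0^o}) for $y_{0}^{o}$, and $S_{0}=A_{2}D_{2}^{-1}A_{2}^{T}+S_{1}$. Your explicit change of variables $\tau=t/\varepsilon$ for the boundary-layer integral makes transparent a step the paper leaves implicit, but the argument is the same.
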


\begin{proof}
Substitution of $z(t,\varepsilon)=\mathrm{col}\big(x(t,\varepsilon) ,
x(t,\varepsilon)\big)$ and $u_{\varepsilon,1}\big(z(t,\varepsilon),t\big)$
(see (\ref{derivation})) into (\ref{new-perf-ind}), and using (\ref%
{mathcal-G_u}) and (\ref{new-matr-D}) yields
\begin{eqnarray}  \label{J(u_eps,1)}
J\big(u_{\varepsilon,1}(z,t)\big)= \int_{0}^{+\infty}\Big[%
x^{T}(t,\varepsilon)D_{1}x(t,\varepsilon)+y^{T}(t,\varepsilon)D_{2}y(t,%
\varepsilon)  \notag \\
+ \big(K_{1}(\varepsilon)x+\varepsilon K_{2}(\varepsilon)y +H_{3}h_{1
0}(t)+\varepsilon H_{1}h_{2 0}(t)\big)^{T}\widetilde{G}  \notag \\
\times\big(K_{1}(\varepsilon)x+\varepsilon K_{2}(\varepsilon)y +H_{3}h_{1
0}(t)+\varepsilon H_{1}h_{2 0}(t)\big)\Big]dt,
\end{eqnarray}
where $\widetilde{G}$ is given in (\ref{eqH1H2}).

Now, using the equations (\ref{K}), Lemma \ref{justif-asymp-x^as-y^as} and
the inequalities (\ref{x_0^O-ineq}), (\ref{y_0^O-ineq}), (\ref{y_0^b-ineq}),
we can represent the expression (\ref{J(u_eps,1)}) as follows:
\begin{eqnarray}  \label{J(u_eps,1)-2}
J\big(u_{\varepsilon,1}(z,t)\big)= \int_{0}^{+\infty}\Big[\big(x_{0}^{o}(t)%
\big)^{T}D_{1}x_{0}^{o}(t) + \big(y_{0}^{o}(t)\big)^{T}D_{2}y_{0}^{o}(t)
\notag \\
+ \big(P_{1 0}x_{0}^{o}(t) + h_{1 0}(t)\big)^{T}H_{3}^{T}\widetilde{G}H_{3}%
\big(P_{1 0}x_{0}^{o}(t) + h_{1 0}(t)\big)\big]dt + l(\varepsilon),
\end{eqnarray}
where $l(\varepsilon)$ is some function of $\varepsilon$ satisfying the
inequality
\begin{equation}  \label{ineq-l}
\|l(\varepsilon)\|\le c\varepsilon,\ \ \ \ \varepsilon\in(0,\tilde{%
\varepsilon}_{1}],
\end{equation}
$c>0$ is some constant independent of $\varepsilon$.

Now, using (\ref{y_0^o}), we have
\begin{equation}
\big(y_{0}^{o}(t)\big)^{T}D_{2}y_{0}^{o}(t)=\big(P_{10}x_{0}^{o}(t)+h_{10}(t)%
\big)^{T}A_{2}D_{2}^{-1}A_{2}^{T}\big(P_{10}x_{0}^{o}(t)+h_{10}(t)\big).
\label{expres-1}
\end{equation}%
Also, using (\ref{eqL-1})-(\ref{eqH1H2}), we obtain
\begin{equation}
H_{3}^{T}\widetilde{G}H_{3}=S_{1}.  \label{expres-2}
\end{equation}%
Finally, the substitution of (\ref{expres-1})-(\ref{expres-2}) into (\ref%
{J(u_eps,1)-2}), and using the equation (\ref{S_1^o}) and the inequality (%
\ref{ineq-l}) lead immediately to the inequality (\ref{asymp-y-ineq}). Thus,
the corollary is proven.
\end{proof}

\subsection{Main part of the proof}

Substitution of the optimal control (\ref{bar-u*}) of the ROCP into the
dynamics (\ref{out-dyn1}) of this problem, and using the equations (\ref%
{S_1^o-new-repr}), (\ref{matr-tildA_10}) yields after some rearrangement the
following initial-value problem for the optimal trajectory $\bar{x}^{\ast
}(t)$, $t\geq 0$ in the ROCP:
\begin{equation}
\frac{d\bar{x}^{\ast }(t)}{dt}={\mathcal{A}}_{0}\bar{x}^{\ast
}(t)-S_{0}h_{10}(t)+f_{1}(t),\ \ \ \ \bar{x}^{\ast }(0)=x_{0}.
\label{opt-traj-ROCP}
\end{equation}%
Comparison of this problem with the problem (\ref{eq-outer-x-2})-(\ref%
{in-cond-x_0^o}) yields
\begin{equation}
\bar{x}^{\ast }(t)\equiv x_{0}^{o}(t),\ \ \ \ t\geq 0.  \label{bar-x-x^o}
\end{equation}%
Replacing $\bar{x}$ with $x_{0}^{o}(t)$ in (\ref{bar-u*}), we obtain the
time realization $\bar{u}^{\ast }(t)$ of the optimal state-feedback control
in the ROCP
\begin{equation}
\bar{u}^{\ast }(t)=-\Theta ^{-1}\bar{B}^{T}P_{10}^{\ast }x_{0}^{o}(t)-\Theta
^{-1}\bar{B}^{T}h_{10}(t),\ \ \ \ t\geq 0.  \label{time-real-ROCP}
\end{equation}%
Now, substituting (\ref{bar-x-x^o}) and (\ref{time-real-ROCP}) into (\ref%
{bar-J}) instead of $\bar{x}(t)$ and $\bar{u}(t)$, respectively, and using (%
\ref{S_1^o-new-repr}) yield the equality
\begin{equation}
\bar{J}^{\ast }=J_{0},  \label{bar-J=J_0}
\end{equation}%
where $\bar{J}^{\ast }$ is the optimal value of the cost functional in the
ROCP, while the value $J_{0}$ is given by (\ref{J_0}). Finally, the
equalities (\ref{J*=barJ}), (\ref{bar-J=J_0}) and the inequality (\ref%
{ineq-J-asympt}) directly imply the statement of the theorem.

\end{document}